\numberwithin{equation}{section}
\newtheorem{theo}{Theorem}[section]
\newtheorem{hyp}[theo]{Assumption}
\newtheorem{lem}[theo]{Lemma}
\newtheorem{cor}[theo]{Corollary}
\newtheorem{prop}[theo]{Proposition}
\newtheorem{rmk}[theo]{Remark}
\newtheorem{setting}[theo]{Setting}
\newcommand{\fr}{\penalty-20\null\hfill$\blacksquare$}         
\newcommand{\eps}{\varepsilon}
\newcommand{\qtext}[1]{\quad\mbox{#1}\quad} 
\newcommand{\qqtext}[1]{\qquad\mbox{#1}\qquad} 
\newcommand{\R}{\mathbb{R}}
\newcommand{\g}{\gamma}
\newcommand{\supp}{\operatorname{supp}}
\renewcommand{\P}{\mathcal{P}}
\renewcommand{\o}{\omega}
\renewcommand{\S}{\mathsf{S}} 
\renewcommand{\d}{{\mathrm d}}			
\newcommand{\dt}{{\d t}}
\newcommand{\ddt}{{\frac \d\dt}}
\newcommand{\sfd}{{\sf d}}				
\newcommand{\X}{{\rm X}}				
\newcommand{\V}{{\sf E}}
\newcommand{\EVI}{{\rm EVI}}
\newcommand{\RCD}{{\rm RCD}}
\newcommand{\CAT}{{\rm CAT}}
\newcommand{\m}{\mathfrak{m}}			
\newcommand{\cost}{\mathscr{C}}		
\begin{document}

\title{The dynamical Schr\"odinger problem in abstract metric spaces}
\date{}
\author{L\'eonard Monsaingeon
    \thanks{GFM Universidade de Lisboa, Campo Grande, Edif\'icio C6, 1749-016 Lisboa, Portugal
    and IECL Universit\'e de Lorraine, F-54506 Vandoeuvre-l\`es-Nancy Cedex, France.
    email: leonard.monsaingeon@univ-lorraine.fr
}
\and Luca Tamanini
    \thanks{CEREMADE (UMR CNRS 7534), Universit\'e Paris Dauphine PSL, Place du Mar\'echal de Lattre de Tassigny, 75775 Paris Cedex 16, France
    and INRIA-Paris, MOKAPLAN, 2 Rue Simone Iff, 75012, Paris, France.
    email: tamanini@ceremade.dauphine.fr}
\and Dmitry Vorotnikov
    \thanks{University of Coimbra, CMUC, Department of Mathematics, 3001-501 Coimbra, Portugal.
    email: mitvorot@mat.uc.pt
    }
    }

\maketitle

\begin{abstract}
In this paper we introduce the dynamical Schr\"odinger problem, defined for a wide class of entropy and Fisher information functionals, as a geometric problem on abstract metric spaces.
Under very mild assumptions we prove a generic $\Gamma$-convergence result towards the geodesic problem as the noise parameter $\eps\downarrow 0$.
We also study the dependence of the entropic cost on the parameter $\eps$.
Some examples and applications are discussed.
\end{abstract}

\noindent
MSC [2020] 49J45, 49Q20, 58B20.\\
Keywords: gradient flows, metric geometry, optimal transport, 
Fisher information, Gamma-convergence

\tableofcontents


\section{Introduction}

Gaspard Monge and Erwin Schr\"odinger came up with two a priori unrelated problems that are concerned with finding a preeminent way of deforming a prescribed probability distribution into another one. While Monge was interested in optimizing the cost of transportation of goods \cite{Villani03,Villani09,S15}, Schr\"odinger's original thought experiment \cite{Schrodinger31, Schrodinger32} aimed for finding the most likely evolution between two subsequent observations of a cloud of independent particles. 
So, even if in both cases we are facing an interpolation and optimization problem, the former is deterministic in nature whereas the latter is strongly related to large deviations theory, and is, at the first glance, purely stochastic. We refer to a recent survey \cite{CGP16} for various formulations and aspects of the Schr\"odinger problem, and to \cite{Zambrini86,Z15} for a discussion of its role in Euclidean Quantum Mechanics. 

Anyway, several analogies and connections exist between the two problems.
They can be appreciated by looking carefully at the interpolation aspects of both problems, namely at their dynamical formulations and the underlying equations governing the respective evolutions. 
In the case of a quadratic transportation cost over a Riemannian manifold $M$, the Monge-Kantorovich optimal transport problem is solved (at least in a weak sense) by interpolating between the source and the target distributions with a constant-speed, length-minimizing geodesic in the Otto-Wasserstein space of probability measures $\mathcal P_2(M)$.
This gives a curve $(\mu_t)_{t \in [0,1]} \subset \mathcal P_2(M)$ which formally satisfies (in the sense of the celebrated Otto calculus \cite{Otto01,Villani03,Villani09})
\begin{equation}\label{eq:otto-geod}
\nabla_{\dot \mu_t}\dot \mu_t = 0,
\end{equation}
where $\nabla_{\dot\mu_t}$ is the covariant derivative along the curve $t \mapsto \mu_t$. 
The Schr\"odinger problem with parameter $\eps$ (from a physical viewpoint, $\eps$ can be seen as a temperature or level of noise) can also be translated into such a geometric language.
By analogy, when looking at the covariant derivative along the optimal evolution $(\mu_t^\eps)_{t \in [0,1]}$, usually called Schr\"odinger bridge or entropic interpolation, the resulting equation is surprising and can be viewed \cite{Conforti17} as Newton's second law
\begin{equation}\label{eq:otto-newton}
\nabla_{\dot \mu_t^\eps}\dot \mu_t^\eps = \frac {\eps^2} 8 \nabla I(\mu_t^\eps),
\end{equation}
where in the right-hand side $\nabla$ denotes the gradient in the Otto-Wasserstein pseudo-Riemannian sense and $I$ is the Fisher information
$$
I(\mu)=4 \int_M |\nabla \sqrt \rho|^2\,\d\mathrm{vol}=\int_M |\nabla\log\rho|^2\rho\,\d\mathrm{vol}
$$
provided $\mu = \rho\cdot\mathrm{vol}$.
A related observation is that the (scaled) heat flow, coinciding with the (scaled) gradient flow of the Boltzmann-Shannon entropy \cite{JKO98, Villani09}
$$
H(\mu)=\int_M \rho \log \rho\,\d\mathrm{vol}
$$
for $\mu = \rho\cdot \mathrm{vol}$, is also a solution to \eqref{eq:otto-newton}: a simple differentiation in time of $\dot\mu_t=-\frac{\eps}{2}\nabla H(\mu_t)$ and the fact that $I = |\nabla H|^2$ in the Otto-Wasserstein sense automatically yield 
\[
\nabla_{\dot\mu_t}\dot\mu_t = \frac{\eps}{2} \nabla^2 H(\mu_t)\cdot\frac{\eps}{2}\nabla H(\mu_t) = \frac{\eps^2}{8}\nabla |\nabla H(\mu_t)|^2 = \frac{\eps^2}{8}\nabla I(\mu_t).
\]
This shows that the Schr\"odinger problem lies between optimal transport and diffusion and is naturally intertwined with both deterministic behaviour and Brownian motion. It shares the same Newton's law as the gradient flow of the entropy, but unlike the heat flow it has a prescribed final configuration to match:
it is up to the parameter $\eps$ to tip the balance in favour of deterministic transport or diffusion.
With this heuristics in mind, we see that as $\eps \to 0$ the applied force $\eps^2 \nabla I(\mu_t^\eps)$ in \eqref{eq:otto-newton} vanishes, so that the Schr\"odinger problem may be interpreted as a noisy (entropic) counterpart of the Monge-Kantorovich optimal transport, corresponding to the unforced geodesic evolution \eqref{eq:otto-geod} discussed above.
This informal relationship has a rigorous counterpart, which dates back to the pioneering works on the asymptotic behavior of the Schr\"odinger problem as $\eps\to 0$ of T.\ Mikami, M.\ Thieullen \cite{Mikami04,MikamiThieullen08}, and C.\ L\'eonard \cite{Leonard12,Leonard14}. 
This was subsequently developed in \cite{CDPS17,BaradatMonsaingeon18,GigTam18}.
Very recently \cite{LoM20,MG20}, similar small-noise results were obtained for static Monge-Kantorovich problems regularized with more general entropies. 

This first connection can be investigated further and by doing so one can remark that \eqref{eq:otto-newton} is exactly the Euler-Lagrange optimality equation for the dynamical Benamou-Brenier formulation of the Schr\"odinger problem \cite{C14,CGP16,Leonard14,GigTam20}, which consists in minimizing the Lagrangian kinetic action perturbed by the Fisher information:
In more precise terms,
\begin{equation}\label{eq:bbs}
\inf\bigg\{\frac{1}{2}\int_0^1\int_M |v_t|^2\,\d\mu_t\d t + \frac{\eps^2}{8}\int_0^1 I(\mu_t)\,\d t\bigg\},
\end{equation}
where the infimum runs over all solutions of the continuity equation
$$
\partial\mu_t + \mathrm{div}(v_t\mu_t)=0
$$
with prescribed initial and final densities.
Also from this variational standpoint the reader can see that as $\eps \to 0$ the Schr\"odinger problem formally reduces to
\begin{equation} \label{BB}
\inf\frac{1}{2}\int_0^1\int_M |v_t|^2\,\d\mu_t\d t,
\end{equation}
namely to the dynamical Benamou-Brenier formulation of the (quadratic) optimal transport problem \cite{BenamouBrenier00}. This variational representation depicts in a way clearer than \eqref{eq:otto-newton} the double nature of the Schr\"odinger problem, the competition between the determinism encoded in the kinetic energy and the unpredictability coming from the Fisher information, and the role played by $\eps$ in balancing these two opposite behaviours.
\bigskip

The double bond of the Schr\"odinger problem with optimal transport on the one hand and heat flow on the other hand results in fruitful and wide-ranging applications of both theoretical and applied interest.
Indeed, from the connection with the heat flow the solutions to the Schr\"odinger problem gain regularity properties which are not available in optimal transport, and thanks to the asymptotic behaviour of the Schr\"odinger problem as $\eps \to 0$ entropic interpolations represent an efficient way to approximate Wasserstein geodesics with second-order accuracy \cite{GigTam18,ConTam19}.
This approach has already turned out to be successful in conjunction with functional inequalities \cite{ConRip18,GLRT19} and differential calculus along Wasserstein geodesics \cite{GigTam18}.
But the nice behaviour of Schr\"odinger bridges is important also for computational purposes.
The impact of Schr\"odinger problem and Sinkhorn algorithms (deeply related to the static formulation of the former) on the numerical methods used in optimal transportation theory has been impressive, as witnessed by several recent works (see \cite{peyre2019computational} and references therein as well as \cite{Cuturi13, BCCNP15, BCN16, BCN17, BCDMN18,MG19}).
\bigskip

As a matter of fact neither the particular structure of the Wasserstein space nor the specific choice of the Boltzmann-Shannon functional are required to define the two problems in question (cf.\ a related discussion in the heuristic paper \cite{leger2019geometric}): one can of course define length-minimizing geodesics in any metric space $(\X,\sfd)$, and the Schr\"odinger problem (or at least its Benamou-Brenier formulation described above) merely involves an entropy functional and a corresponding Fisher information.
Given such a reasonable entropy functional $\V$ on $\X$ that generates a gradient flow in a suitable sense, the corresponding Fisher information is expected to be nothing but the dissipation rate of $\V$ (along solutions of its own gradient flow), just as $I$ coincides with the rate of dissipation of the entropy $H$ along the heat flow.
This observation is the starting point of the present paper, where we intend to study the abstract Schr\"odinger bridge problem or, in other words, the entropic approximation of geodesics in metric spaces. 

The first main result that we achieve is the \textbf{$\Gamma$-convergence (Theorems \ref{t:gconv1} and \ref{t:gconv2})}. Under very mild assumptions on $\X$ and $\V$, we will prove the solvability of the abstract $\eps$-Schr\"odinger problem and the $\Gamma$-convergence to the corresponding geodesic problem as $\eps\to 0$.
We will also rigorously justify, in the metric setting, that any trajectory of a gradient flow solves an associated Schr\"odinger problem (Proposition \ref{p:gfs}).
Leveraging a quantitative $AC^2$ estimate based on a straightforward chain-rule in the smooth Riemannian setting, the cornerstone of our analysis will be the systematic construction of an $\eps$-regularized entropic copy $(\gamma^\eps_t)_{t\in[0,1]}$ of any arbitrary curve $(\gamma_t)_{t\in[0,1]}$.
These perturbed curves will provide recovery sequences for the $\Gamma$-convergence.
Our construction is completely Eulerian and essentially consists in running the $\V$-gradient flow for a short time $h_\eps(t)$ starting at $\gamma_t$ for all $t$, for well-chosen functions $h_\eps\geq 0$.
The challenge here will be to reproduce the (formal, differential) Riemannian chain-rule in metric spaces.
This idea of perturbing curves ``in the direction of the gradient flow'' appeared first in \cite{erbar2015large} in a slightly different context.
Notably and more recently, this approach has also been used independently by A. Baradat and some of the authors \cite{BaradatMonsaingeon18,MV20} in order to prove the $\Gamma$-convergence for the classical dynamical Schr\"odinger problem on the Otto-Wasserstein space and for its counterpart on the non-commutative Fisher-Rao space, respectively.
However, in those papers the computations were {\it ad hoc} and heavily exploited the underlying structures of the particular spaces as well as the properties of the particular gradient flows (namely, of the classical heat flow and of its restriction to multivariate Gaussians), whereas here we derive everything from the existence of an abstract gradient flow on $\X$ driven by $\V$.

\begin{rmk} {\rm In the smooth Riemannian setting, and given $\lambda\in \R$, elementary calculus shows that the $\lambda$-convexity of $\V$ along geodesics is of course equivalent to a uniform lower bound $\mathrm{Hess}\,\V(x) \geq \lambda \operatorname{Id}$ as quadratic forms in the tangent space, but also more importantly to the $\lambda$-contractivity of the $\V$-gradient flow.
		In the metric setting no second order calculus is available in general, and the very notion of gradient flow as well as its connection with geodesic convexity and contractivity become much more subtle.
		The key notion of gradient flow that we shall use throughout is that of \emph{Evolution Variational Inequality}, or $\EVI_\lambda$ flow \cite{AmbrosioGigliSavare08}.
		Under reasonable assumptions it is well known that (a suitable variant of) convexity of $\V$ generally provides existence of an $\EVI_\lambda$-flow starting at any $x\in \X$, see \cite{AmbrosioGigliSavare08}.
		A natural question to ask is whether the converse also holds true, i.e.\ whether well-posedness of a reasonable gradient flow implies some convexity.
		This was proved in \cite{bolley2014nonlinear} for the specific case of the Euclidean Wasserstein space $\X=W_2(\Omega)$, $\Omega\subset\R^d$, and at least for the so-called internal energies, and it is shown therein that $0$-contractivity of the gradient flow (or equivalently, of the associated nonlinear diffusion equation) implies $0$-displacement convexity in the sense of McCann \cite{McC97}.
		In the same spirit, and building up on Otto and Westdickenberg \cite{otto2005eulerian}, Daneri and Savar\'e proved in a very general metric setting that the generation of an $\EVI_\lambda$-flow indeed implies $\lambda$-geodesic convexity \cite[Theorem 3.2]{DaneriSavare08}.
		A byproduct of our analysis for the $\Gamma$-convergence will give a new independent proof of this latter fact by a completely different approach, essentially by constructing an $\eps$-entropic regularization of geodesics and carefully examining the defect of optimality at order one in $\eps\to 0$.}
\end{rmk}

The second group of main results has to do with the \textbf{behaviour of the cost and its Taylor expansion (Proposition \ref{prop:derivative} and Theorem \ref{thm:taylor}).} As a main application of the $\Gamma$-convergence of the Schr\"odinger problem to the geodesic problem as $\eps \to 0$ (and more generally of the $\eps'$-Schr\"odinger problem to the $\eps$-one as $\eps' \to \eps$) we investigate the behaviour of the optimal value of the dynamical Schr\"odinger problem, henceforth called \emph{entropic cost}, as a function of the temperature parameter $\eps$, with particular emphasis on the regularity and the small-noise regime.
For the classical dynamical Schr\"odinger problem \eqref{eq:bbs}, it has recently been proved by the second author with G.\ Conforti \cite{ConTam19} that the entropic cost is of class $C^1((0,\infty)) \cap C([0,\infty))$ (actually $C^1([0,\infty))$ under suitable assumptions) and twice a.e.\ differentiable; once this regularity information is available, the formula for the first derivative is rather easy to guess, as by the envelope theorem it coincides with the partial derivative w.r.t.\ $\eps$ of the functional in \eqref{eq:bbs} evaluated at any critical point. Denoting by $\cost_\eps(\mu,\nu)$ the value in \eqref{eq:bbs} with marginal constraints $\mu$ and $\nu$ and by $(\mu_t^\eps)_{t \in [0,1]}$ the associated Schr\"odinger bridge, this statement reads as 
\[
\frac{\d}{\d\eps}\cost_\eps(\mu,\nu) = \frac{\eps}{4}\int_0^1 I(\mu_t^\eps)\,\d t, \qquad \forall \eps > 0
\]
and in \cite{ConTam19} this identity played an important role in the study of both the large- and small-noise behaviour of the Schr\"odinger problem, obtaining in particular a Taylor expansion around $\eps=0$ with $o(\eps^2)$-accuracy.
Since the central object in the present paper is an abstract and general formulation of \eqref{eq:bbs}, an analogous result is expected to hold. However, from a technical viewpoint the proof is much more subtle and challenging, because unlike \eqref{eq:bbs} our metric version of the dynamical Schr\"odinger problem may have multiple solutions. 
For this reason the discussion about the regularity of the entropic cost in this paper is less concise than in \cite{ConTam19}.
Nonetheless, we are still able to deduce the same kind of Taylor expansion with the same accuracy.
Given the previous interpretation of the Schr\"odinger problem as a noisy Monge-Kantorovich problem and the importance of quantitative estimates in approximating optimal transport by means of the Schr\"odinger problem, it is reasonable to expect that such a Taylor expansion (valid in a general framework for a wide choice of functionals $\V$) will fit to a countless variety of examples, some of which will be discussed here.

\paragraph{Structure of the paper.}
In Section~\ref{sec:heuristics} we give a short and formal proof of our fundamental $AC^2$ estimate in the smooth Riemannian setting, and show how it can be exploited to establish $\Gamma$-convergence and convexity.
Section~\ref{sec:metric spaces} fixes the metric framework in which we work for the rest of the paper, and extends the previous estimate to this metric setting.
In Section~\ref{sec:small_noise_convexity} we prove the $\Gamma$-convergence as $\eps\downarrow 0$, and offer a new proof of the geodesic convexity of the generators of $\EVI$-flows.
Section~\ref{sec:derivative_cost} studies the dependence of the optimal entropic cost on the temperature parameter $\eps>0$, and provides a second order expansion. 
Finally, we list in Section~\ref{s: exam} several examples and applications covered by our abstract results.


\section{Heuristics}\label{sec:heuristics}

Here we remain formal and the computations are carried in a Riemannian setting, where classical calculus and chain-rules are available.
(Significant work will be required later on to adapt the computations in metric spaces.)
All the objects and functions in this section are therefore considered to be smooth, and we deliberately ignore any regularity issue.

Let $M$ be a Riemannian manifold with scalar product $\langle.,.\rangle_q$ at a point $q\in M$ and induced Riemannian distance $d$, and let $V:M\to \R$ be a given potential.
For simplicity we assume here that $V$ is globally bounded from below on $M$, and up to replacing $V$ by $V-\min V$ we can assume that $V(q)\geq 0$.
(In section~\ref{sec:metric spaces} we will relax this assumption and allow $V$ to be only \emph{locally bounded from below}.)
Given a small temperature parameter $\eps>0$, and following \cite{leger2019geometric}, the (dynamic) \emph{geometric Schr\"odinger problem} consists in solving the optimization problem
\begin{multline}
 \label{eq:schrodinger_pb_heuristic}
 \frac{1}{2}\int_0^1\left|\frac{\d q_t}{\d t}\right|^2\,\dt + \frac{\eps^2}{2}\int_0^1 |\nabla V|^2(q_t)\,\dt
\qquad 
 \longrightarrow 
 \qquad 
 \min;
 \\
 \mbox{s.t. }q\in C([0,1],M)
 \mbox{ with endpoints }q_0,q_1.
\end{multline}
For $s\geq 0$ we denote by $\Phi(s,q_0)$ the semi-flow corresponding to the autonomous $V$-gradient flow started from $q_0\in M$,
\[
\left\{
\begin{array}{l}
 \displaystyle{\frac{\d }{\d s}\Phi(s,q_0)=-\nabla V(\Phi(s,q_0))},\\
 \Phi(0,q_0) = q_0.
\end{array}
\right.
\]
The goal of this section is to give a straightforward proof of the following two facts, assuming that the potential $V$ is well behaved:
\begin{enumerate}[label=({\roman*})]
 \item 
 the $\eps$-Schr\"odinger problem converges to the geodesic problem as $\eps\to 0$;
 \item
 $\lambda$-contractivity of the generated flow $\Phi$ can be turned into $\lambda$-convexity along geodesics.
\end{enumerate}
With this goal in mind, fix any two endpoints $q_0,q_1\in M$ and take an arbitrary curve joining them
$$
q\in C([0,1],M),
\hspace{1.5cm}
q|_{t=0}=q_0\qtext{and}q|_{t=1}=q_1.
$$
For any  function $h(t)\geq 0$ with $h(0)=h(1)=0$, we perturb $q$ by defining
$$
\tilde q_t:=\Phi(h(t),q_t),
\hspace{1.5cm}
t\in[0,1]
$$
i.e.\ $\tilde q_t$ is the solution of the $V$-gradient flow at time $s=h(t)\geq 0$ starting from $q_t$ at time $s=0$.
We shall refer to $t\in[0,1]$ as a ``horizontal time'' and to $s\in[0,h(t)]$ as a ``vertical time'', see Figure~\ref{fig:qtilde}.
Later on we will think of the curve $\tilde q$ as a ``regularized'' version of $q$.

\begin{figure}[h!]
  \begin{center}
  \begin{tikzpicture}
  [
nodewitharrow/.style 2 args={                
            decoration={             
                        markings,   
                        mark=at position {#1} with { 
                                    \arrow{stealth},
                                    \node[transform shape,above] {#2};
                        }
            },
            postaction={decorate}
}
]
  
  \filldraw (0,0) circle (2pt);
  \node [below] at (0,0) {$q_0$};
  
  \filldraw (10,0) circle (2pt);
  \node [below] at (10,0) {$q_1$};

  \filldraw (3.6,2.4) circle (2pt);
  \node [right] at (3.6,2.4) {$\Phi(s,q_t)$};

  \draw [nodewitharrow={0.6}{$t$}, thick] (0,0) to [out=20,in=-180] (5,.8) to[out=0,in=160] (10,0);
  \draw [nodewitharrow={0.3}{},thick] (0,0) to[out=60,in=-180] (5,5) to [out=0,in=130] (7,3) to [out=-50,in=130] (10,0);
  \draw [nodewitharrow={0.8}{\rotatebox{-90}{$s$}}, thick, dashed] (3.6,.752) to (3.6,4.73);
  
  \filldraw (3.6,4.73) circle (2pt);
  \node [above] at (3.6,4.73) {$\tilde q_t$};
  
  \filldraw (3.6,.752) circle (2pt);
  \node [below] at (3.6,.7) {$q_t$};
  \end{tikzpicture}
  \end{center}
  \caption{The perturbed curve}
  \label{fig:qtilde}
  \end{figure}
Note that the endpoints remain invariant, $\tilde q_0=q_0$ and $\tilde q_1=q_1$.
Since by definition of the flow $\partial_s \Phi(s,q_t)=-\nabla V(\Phi(s,q_t))$, the speed of the perturbed curve can be computed as
\[
\begin{split}
\frac{\d \tilde q_t}{\d t} & = \ddt\Big(\Phi(h(t),q_t)\Big) = \partial_s\Phi(h(t),q_t)h'(t) + \partial_q \Phi(h(t),q_t)\frac{\d q_t}{\d t} \\
& = -h'(t)\nabla V(\tilde q_t) +\partial_q \Phi(h(t),q_t)\frac{\d q_t}{\d t}.
\end{split}
\]
Bringing the $h'(t)$ term to the left-hand side and taking the half squared norm (in the tangent space $T_{\tilde q_t}M$) gives
\begin{equation}
 \label{eq:Riemannian_computation_pre}
 \frac{1}{2}\left|\frac{\d \tilde q_t}{\d t}\right|^2 +\frac{1}{2} |h'(t)|^2|\nabla V(\tilde q_t)|^2 
 +  h'(t) \underbrace{\langle \nabla V(\tilde q_t),\frac{\d\tilde q_t}{\d t}\rangle_{\tilde q_t}}_{=\ddt V(\tilde q_t)}
 = \frac{1}{2} \left| \partial_q \Phi(h(t),q_t)\frac{\d q_t}{\d t}\right|^2.
\end{equation}
Assume now that, for whatever reason, the gradient flow satisfies the following quantified contractivity estimate w.r.t.\ the Riemannian distance $d$
\begin{equation}
\label{hyp:lambda_contractivity}
d(\Phi(s,p_0),\Phi(s,p_0'))\leq e^{-\lambda s}d(p_0,p_0'),
\hspace{1cm}\forall s\geq 0,\, p_0,p_0'\in M
\end{equation}
for some fixed $\lambda\in \R$. Then it is easy to check that the linear map $v\mapsto \partial_q\Phi(s,p)\cdot v$ (from $T_pM$ to $T_{\Phi(s,p)}M$) has norm less than $e^{-\lambda s}$, and therefore \eqref{eq:Riemannian_computation_pre} gives
\begin{equation}
  \label{eq:Riemannian_computation}
 \frac{1}{2}\left|\frac{\d \tilde q_t}{\d t}\right|^2 +\frac{1}{2} |h'(t)|^2|\nabla V(\tilde q_t)|^2 
 +  h'(t) \ddt V(\tilde q_t)
 \leq   \frac{1}{2} e^{-2\lambda h(t)}\left| \frac{\d q_t}{\d t}\right|^2.
\end{equation}
Integration by parts yields next
\begin{equation}
\label{eq:fundamental}
\begin{split}
\frac{1}{2}\int_0^1\left|\frac{\d \tilde q_t}{\d t}\right|^2\,\dt & + \frac{1}{2} \int_0^1|h'(t)|^2|\nabla V(\tilde q_t)|^2 \,\dt - \int_0^1  h''(t) V(\tilde q_t)\,\dt \\
& \leq \frac{1}{2} \int_0^1 e^{-2\lambda h(t)}\left| \frac{\d q_t}{\d t}\right|^2\dt + \Big(h'(0)V(q_0)-h'(1)V(q_1)\Big),
\end{split}
\end{equation}
where the invariance $\tilde q_0=q_0$, $\tilde q_1=q_1$ was used in the last boundary terms.
This fundamental estimate gives a quantified bound on the kinetic energy (namely the $L^2$ speed) of $\tilde q$ in terms of that of the original curve $q$, and will be the cornerstone of the whole analysis.

Both the convexity and the convergence of the Schr\"odinger problem will actually follow by setting $h(t)=\eps H(t)$ for suitable choices of $H(t)\geq 0$, and then letting $\eps \downarrow 0$.
Note that in this case we have $h(t)=\eps H(t) \downarrow 0$ uniformly, hence the perturbed curve 
\begin{equation}\label{eq:perturbed-formal}
 q^\eps_t := \Phi(\eps H(t),q_t) 
\end{equation}
will converge uniformly to $q$ as $\eps \downarrow 0$ too.
%
\subsection{Convergence of the Schr\"odinger problem}

A first use of \eqref{eq:fundamental} will be crucial in proving the $\Gamma$-convergence of the Schr\"odinger functional
$$
\mathcal A_\eps(q):=\frac 12 \int_0^1 \left|\frac{\d q_t}{\d t}\right|^2\,\dt + \frac{\eps^2}{2}\int_0^1 |\nabla V(q_t)|^2\,\dt
$$
towards the kinetic action
$$
\mathcal A(q):=\frac 12 \int_0^1 \left|\frac{\d q_t}{\d t}\right|^2\,\dt
$$
as $\eps \downarrow 0$.
\begin{theo}[formal $\Gamma$-limit]
\label{theo:Gamma_lim_formal}
For any $q_0,q_1\in M$ it holds
$$
\mathcal A=\Gamma-\lim\limits_{\eps\to 0}\mathcal A_\eps
$$
for the uniform convergence on the space of curves with fixed endpoints $q_0,q_1$.
\end{theo}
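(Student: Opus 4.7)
The plan is to verify the two defining inequalities of $\Gamma$-convergence, namely the $\Gamma$-liminf inequality for arbitrary uniformly converging sequences and the existence of a recovery sequence, exploiting the fundamental estimate \eqref{eq:fundamental} for the latter.

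For the $\Gamma$-liminf, consider any sequence $q^\eps \to q$ uniformly in $C([0,1], M)$ with fixed endpoints. Since the Fisher information term in $\mathcal{A}_\eps$ is nonnegative, we immediately have the pointwise bound $\mathcal{A}_\eps(q^\eps) \geq \mathcal{A}(q^\eps)$. If $\sup_\eps \mathcal{A}_\eps(q^\eps) = +\infty$ there is nothing to prove; otherwise the curves $q^\eps$ are uniformly $1/2$-H\"older, so passing to a subsequence one can extract an $H^1$-weak limit that coincides with the uniform limit $q$. The standard weak lower semicontinuity of the kinetic action then yields $\liminf_\eps \mathcal{A}(q^\eps) \geq \mathcal{A}(q)$, and combining gives the desired $\liminf_\eps \mathcal{A}_\eps(q^\eps) \geq \mathcal{A}(q)$.

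For the $\Gamma$-limsup, I would construct an explicit recovery sequence via the perturbation scheme \eqref{eq:perturbed-formal}. Given any $q$ with $\mathcal{A}(q) < \infty$, pick a smooth function $H:[0,1]\to [0,\infty)$ with $H(0)=H(1)=0$ (for instance $H(t)=\sin(\pi t)$) and set $q^\eps_t:=\Phi(\eps H(t),q_t)$. The endpoint conditions $\tilde q_0=q_0$, $\tilde q_1=q_1$ are preserved and $q^\eps\to q$ uniformly since $\eps H(t)\to 0$ uniformly. Applying the fundamental estimate \eqref{eq:fundamental} with $h(t)=\eps H(t)$ and rearranging to isolate $\mathcal{A}_\eps(q^\eps)$ on the left gives
\begin{align*}
\mathcal{A}_\eps(q^\eps)
\;\leq\; & \frac{1}{2}\int_0^1 e^{-2\lambda\eps H(t)}\left|\tfrac{\d q_t}{\dt}\right|^2\dt
+ \eps\bigl(H'(0)V(q_0)-H'(1)V(q_1)\bigr)\\
& + \eps\int_0^1 H''(t)\,V(q^\eps_t)\,\dt
+ \frac{\eps^2}{2}\int_0^1 \bigl(1-(H'(t))^2\bigr)\,|\nabla V(q^\eps_t)|^2\,\dt.
\end{align*}
Letting $\eps\downarrow 0$: the first term tends to $\mathcal{A}(q)$ by dominated convergence since $e^{-2\lambda\eps H(t)}\to 1$ uniformly (note $H$ is bounded); the boundary term is $O(\eps)$; and since $q^\eps$ stays in a compact set where $V$ and $|\nabla V|$ are bounded, the last two terms are $O(\eps)$ and $O(\eps^2)$ respectively. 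Hence $\limsup_\eps \mathcal{A}_\eps(q^\eps)\leq \mathcal{A}(q)$, which completes the recovery construction.

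In this formal Riemannian context there is really no essential obstacle once \eqref{eq:fundamental} is in hand: the argument is just a careful accounting of $O(\eps)$ terms. The genuinely hard step has already been accomplished in deriving the fundamental estimate itself, which leaned on the smooth chain rule and the hypothesis \eqref{hyp:lambda_contractivity} of $\lambda$-contractivity. The reason this section is tagged as heuristic is precisely that reproducing the perturbation $q^\eps_t=\Phi(\eps H(t),q_t)$ and the corresponding estimate without access to $\partial_q\Phi$ and to a differential chain rule will be the main technical challenge in Section~\ref{sec:metric spaces}, to which the rigorous metric counterpart of this $\Gamma$-convergence statement is deferred.
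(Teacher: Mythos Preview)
Your proof is correct in this formal Riemannian setting, but it differs from the paper's in one instructive respect: the choice of the perturbation profile $H$.

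You take a smooth $H(t)=\sin(\pi t)$, which forces you to carry the correction term $\frac{\eps^2}{2}\int_0^1(1-(H'(t))^2)|\nabla V(q^\eps_t)|^2\,\dt$ and then kill it via the boundedness of $|\nabla V|$ on a compact neighbourhood of the limiting curve. The paper instead takes the piecewise-linear hat function $H(t)=\min\{t,1-t\}$, so that $|H'(t)|^2\equiv 1$ and the Fisher term on the left of \eqref{eq:fundamental} matches the Fisher term in $\mathcal A_\eps$ \emph{exactly}; the only residual is the boundary contribution $2\eps V(q^\eps_{1/2})\geq 0$ from the distributional singularity $H''=-2\delta_{1/2}$, which is discarded using $V\geq 0$. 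Both approaches are legitimate at the heuristic level, but the hat-function choice is what survives cleanly in the metric setting of Section~\ref{sec:metric spaces} (see Theorem~\ref{theo:upper_bound_recovery_metric}), where one has no pointwise control on $|\partial\V|$ along arbitrary curves and cannot afford the extra $(1-(H')^2)$ correction. So your argument works here but would not port over; the paper's choice of $H$ is not incidental.
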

\begin{proof}
We check separately the $\Gamma-\liminf$ and the $\Gamma-\limsup$ properties. As for the former, given any curve $q$ joining $q_0,q_1$ and any $q^\eps \to q$ uniformly, since the kinetic energy functional $q\mapsto \mathcal A(q)$ is always lower semicontinuous for the uniform convergence we get first
\[
\mathcal A(q) \leq \liminf_{\eps \downarrow 0} \mathcal A(q^\eps) \leq \liminf_{\eps \downarrow 0} \mathcal A_\eps(q^\eps).
\]
For the $\Gamma-\limsup$, let $H(t)=\min \{t,1-t\}$ be the hat function centered at $t=1/2$ with height $1/2$ and vanishing at the boundaries, set $h(t)=\eps H(t)$, and let $q^\eps$ be the regularized curve constructed in \eqref{eq:perturbed-formal}. In this simple smooth setting it is not difficult to check that $ q^\eps\to q$ uniformly. Moreover, our choice of $h(t)$ results in $|h'(t)|^2 = \eps^2$ with $h'(0)=\eps$, $h'(1)=-\eps$, and $h''(t)=-2\eps\delta_{1/2}(t)$ in the distributional sense. Therefore \eqref{eq:fundamental} gives immediately
\[
\begin{split}
\mathcal A_\eps(q^\eps) + 2\eps V(q^\eps_{1/2}) & = \frac 12\int_0^1\left|\frac{\d q^\eps_t}{\d t}\right|^2\,\dt + \frac{\eps^2}{2}\int_0^1 |\nabla V(q^\eps_t)|^2\,\dt + 2\eps V(q^\eps_{1/2})
\\
& \leq \frac 12 \int_0^1  e^{-2\eps \lambda H(t)}\left|\frac{\d q_t}{\d t}\right|^2\,\dt + \eps\Big(V(q_0)+V(q_1)  \Big).
\end{split}
\]
The singularity of $h''$ at $t=1/2$ can be easily and rigorously worked around, simply integrating by parts \eqref{eq:Riemannian_computation_pre} separately on each interval $t\in [0,1/2]$ and $t\in[1/2,1]$ and keeping track of the boundary terms resulting ultimately in the above $2\eps V(q^\eps_{1/2})\geq 0$ contribution.
Discarding this latter non-negative term finally gives
$$
\limsup_{\eps \downarrow 0} \mathcal A_\eps(q^\eps)
\leq 
\limsup_{\eps \downarrow 0} 
\left\{
\frac 12 \int_0^1  e^{-2\eps \lambda H(t)}\left|\frac{\d q_t}{\d t}\right|^2\,\dt + \eps\Big(V(q_0)+V(q_1)  \Big)\right\}
=
\mathcal A(q)
$$
and concludes the proof.
\end{proof}

\subsection{Quantifying the convexity}\label{sec:convexity-heuristic}

The second consequence of our fundamental estimate \eqref{eq:fundamental} is the quantification of the convexity of the potential $V$ in terms of the quantified contractivity \eqref{hyp:lambda_contractivity}.
The point here is that the result can be obtained directly from \eqref{eq:Riemannian_computation_pre}, which can be established in a purely metric setting without relying on differential calculus (see the next section for details).

\begin{theo}
\label{theo:V_convex}
Assume that $V$ satisfies \ref{hyp:lambda_contractivity}.
Then $V$ is $\lambda$-geodesically convex, i.e.
$$
V(q_\theta)\leq (1-\theta)V(q_0)+ \theta V(q_1) - \frac{\lambda}{2}\theta(1-\theta) d^2(q_0,q_1),
\qquad \theta\in(0,1)
$$
for \emph{any} geodesic $(q_\theta)_{\theta\in[0,1]}$ in $M$.
\end{theo}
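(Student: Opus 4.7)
The plan is to apply the fundamental inequality \eqref{eq:fundamental} with a tent-shaped perturbation $h(t)=\eps H(t)$ peaked at the target time $\theta$, and to extract the $\lambda$-convexity estimate from the first-order expansion as $\eps\downarrow 0$. Fix $\theta\in(0,1)$ and let $(q_t)_{t\in[0,1]}$ denote the geodesic from $q_0$ to $q_1$, so that $\bigl|\ddt q_t\bigr|\equiv d(q_0,q_1)$. Let $H\colon[0,1]\to[0,1]$ be the piecewise-linear tent with $H(0)=H(1)=0$ and peak value $H(\theta)=1$, i.e.\ $H(t)=t/\theta$ on $[0,\theta]$ and $H(t)=(1-t)/(1-\theta)$ on $[\theta,1]$. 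Define $q^\eps_t:=\Phi(\eps H(t),q_t)$ as in \eqref{eq:perturbed-formal}; by construction $q^\eps$ still joins $q_0$ to $q_1$.

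First I would record the elementary data of $H$: $H'(0^+)=1/\theta$, $H'(1^-)=-1/(1-\theta)$, $H''=-\tfrac{1}{\theta(1-\theta)}\delta_\theta$ in the distributional sense, and $\int_0^1 H(t)\,\dt=1/2$. Plugging $h=\eps H$ into \eqref{eq:fundamental}, dropping the non-negative Fisher-type term $\tfrac12\int_0^1|h'(t)|^2|\nabla V(q^\eps_t)|^2\,\dt$, and using that $q^\eps$ shares its endpoints with the geodesic (so $\int_0^1|\ddt q^\eps_t|^2\,\dt\geq d^2(q_0,q_1)$) while $|\ddt q_t|^2\equiv d^2(q_0,q_1)$, I obtain
\[
\tfrac12 d^2(q_0,q_1)+\frac{\eps}{\theta(1-\theta)}V(q^\eps_\theta)\;\leq\;\tfrac12 d^2(q_0,q_1)\int_0^1 e^{-2\lambda\eps H(t)}\,\dt+\eps\Bigl(\tfrac{V(q_0)}{\theta}+\tfrac{V(q_1)}{1-\theta}\Bigr).
\]
Since $\int_0^1 e^{-2\lambda\eps H(t)}\,\dt=1-\lambda\eps+O(\eps^2)$ (using $\int_0^1 H=1/2$), the zeroth-order terms $\tfrac12 d^2(q_0,q_1)$ on both sides cancel. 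Dividing by $\eps>0$, letting $\eps\downarrow 0$ and invoking the continuity of $\Phi$ at $s=0$ (so $q^\eps_\theta\to q_\theta$ and $V(q^\eps_\theta)\to V(q_\theta)$), I get
\[
\frac{V(q_\theta)}{\theta(1-\theta)}\;\leq\;-\frac{\lambda}{2}\,d^2(q_0,q_1)+\frac{V(q_0)}{\theta}+\frac{V(q_1)}{1-\theta},
\]
which, after multiplication by $\theta(1-\theta)$, is exactly the claimed $\lambda$-geodesic convexity.

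The only subtlety I foresee is that $h''$ is a Dirac mass rather than a classical function, so the integration by parts producing \eqref{eq:fundamental} from \eqref{eq:Riemannian_computation_pre} has to be performed separately on $[0,\theta]$ and $[\theta,1]$, generating a jump contribution $-[h'](\theta)\,V(q^\eps_\theta)=\tfrac{\eps}{\theta(1-\theta)}V(q^\eps_\theta)$ at the interior point $t=\theta$. This is the same mechanism already exploited for the $\Gamma$-limsup in the proof of Theorem~\ref{theo:Gamma_lim_formal} and is entirely routine. Importantly, the argument uses \eqref{eq:Riemannian_computation_pre} only once and never appeals to differential calculus beyond it, so the strategy is expected to transfer verbatim to the metric setting developed in Section~\ref{sec:metric spaces}, yielding the Daneri--Savar\'e-type result alluded to in the Introduction.
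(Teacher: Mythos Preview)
Your proof is correct and follows essentially the same route as the paper: the same tent function $H_\theta$ peaked at $\theta$, the same use of \eqref{eq:fundamental} with the Fisher term discarded, the same optimality of the geodesic to control the kinetic term, and the same first-order expansion in $\eps$ (the paper writes $I_\eps:=\int_0^1\frac{e^{-2\lambda\eps H_\theta(t)}-1}{\eps}\,\dt\to -\lambda$ where you Taylor-expand directly). Your remark on handling the Dirac singularity of $h''$ by splitting the integration at $t=\theta$ is exactly what the paper does as well.
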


\begin{proof}
Let $(q_t)_{t\in[0,1]}$ be an arbitrary geodesic with endpoints $q_0,q_1$.
For fixed $\theta\in(0,1)$ let
$$
 H_\theta(t):=
\begin{cases}
  \displaystyle{\frac{1}{\theta}} t & \mbox{if }t\in [0,\theta],\\
  - \displaystyle{\frac{1}{1-\theta}(t-1)} & \mbox{if }t\in [\theta,1],
\end{cases}
 $$
be the hat function centered at $t=\theta$ with height $1$ and vanishing at $t=0,1$, and for any $\eps > 0$ let $q^\eps$ be the regularized curve constructed in \eqref{eq:perturbed-formal} with $h(t)=\eps H_\theta(t)$. Note moreover that 
\[
h'(0)=\frac \eps\theta, \qquad h'(1)=-\frac\eps{1-\theta}, \qquad h''(t)=-\eps\left(\frac{1}{\theta}+\frac 1{1-\theta}\right)\delta_{\theta}(t)
\]
in the distributional sense. Discarding the non-negative term $|h'(t)|^2|\nabla V(\tilde q_t)|^2$ in \eqref{eq:fundamental}, the optimality of the geodesic $q$ from $q_0$ to $q_1$ gives
\[
\begin{split}
0 & \leq \frac 12 \int_0^1 \left|\frac{\d q^\eps_t}{\d t}\right|^2 \,\dt - \frac 12 \int_0^1 \left|\frac{\d q_t}{\d t}\right|^2 \,\dt 
\\
& \overset{\eqref{eq:fundamental}}{\leq} \int_0^1  h''(t) V( q^\eps_t)\,\dt
+   \frac{1}{2} \int_0^1 \left(e^{-2\lambda h(t)}-1\right)\left| \frac{\d q_t}{\d t}\right|^2\,\dt 
 + \Big(h'(0)V(q_0)-h'(1)V(q_1)\Big)
 \\
& = -\eps\left(\frac 1\theta+\frac 1{1-\theta}\right) V(q^\eps_\theta) + \frac{d^2(q_0,q_1)}{2}\int_0^1\left(e^{-2\lambda \eps H_\theta(t)}-1\right) \dt \\
& \qquad +\eps\left(\frac 1\theta V(q_0)+\frac 1{1-\theta}V(q_1)\right),
\end{split}
\]
where the last equality follows from the constant speed property $|\frac{\d q_t}{\d t}|^2 = d^2(q_0,q_1)$ of the geodesic $(q_t)_{t\in[0,1]}$ connecting $q_0,q_1$ as well as from the explicit properties of $h(t)=\eps H_\theta(t)$ listed above.
Multiplying by $\frac{\theta(1-\theta)}{\eps}>0$ and rearranging gives
 $$
 V(q^ \eps_\theta)\leq (1-\theta)V(q_0)+\theta V(q_1)
 +\theta(1-\theta)\frac{d^2(q_0,q_1)}{2} \underbrace{\int_0^1\frac{e^{-2\lambda \eps H_\theta(t)}-1}{\eps} \dt}_{:=I_\eps}.
 $$
Since $\int_0^1H_{\theta}(t)\d t=\frac 12$ for all $\theta$ we see that $I_\eps\to -2\lambda\int_0^1 H_\theta(t)\,\dt = -\lambda$ as $\eps \downarrow 0$, and the result immediately follows since $V(q^\eps_\theta)\to V(q_\theta)$ as well in the left-hand side.
\end{proof}
%
%

\section{Estimates in metric spaces}
\label{sec:metric spaces}

Before trying to adapt the previous computations to the metric context we need to fix once and for all the framework to be used in the sequel.

\subsection{Preliminaries and setting}\label{sec:preliminaries}
\begin{itemize}
 \item 
By $C([0,1],(\X,\sfd))$, or simply $C([0,1],\X)$, we denote the space of continuous curves with values in the metric space $(\X,\sfd)$.
The collection of absolutely continuous curves on $[0,1]$ is denoted by $AC([0,1],(\X,\sfd))$, or simply by $AC([0,1],\X)$.
For any curve $(\gamma_t) \in AC([0,1],\X)$, its length is well defined as
\[
\ell(\gamma) := \int_0^1 |\dot{\gamma}_t|\,\d t,
\]
where $|\dot{\gamma}_t|$ denotes the metric speed of $\gamma$.
If $|\dot{\gamma}_t| \in L^2(0,1)$, then we shall say that $(\g_t) \in AC^2([0,1],\X)$.
For these notions of absolutely continuous curves and metric speed in a metric space, see for instance \cite[Section 1.1]{AmbrosioGigliSavare08}.
\item
A curve $\gamma : [0,1] \to \X$ is called geodesic provided $\sfd(\gamma_t,\gamma_s) = |t-s|\sfd(\gamma_0,\gamma_1)$ for all $t,s \in [0,1]$.
\item
The slope $|\partial\V|$ of a functional $\V : \X \to \R \cup \{+\infty\}$ at a point $x \in \X$ is set as $+\infty$ if $x \notin D(\V)$, $0$ if $x$ is isolated, and defined as
\[
|\partial \V|(x) := \limsup_{y \to x}\frac{[\V(x) - \V(y)]^+}{\sfd(x,y)}
\]
if $x \in D(\V)$.
\item
A curve $(\gamma_t)_{t>0} \subset \X$ is said to be a gradient flow of $\V$ in the $\EVI_\lambda$ sense (with $\lambda \in \R$) provided $(\gamma_t) \in AC_{loc}((0,\infty),\X)$ and
\begin{equation}
\label{eq:EVIlambda}
\tag{$\EVI_\lambda$}
\frac{1}{2}\ddt \sfd^2(\gamma_t,y) + \frac{\lambda}{2}\sfd^2(\gamma_t,y) + \V(\gamma_t) \leq \V(y), \qquad \forall y \in \X,\,\textrm{a.e. } t > 0.
\end{equation}
If $\gamma_t \to x$ as $t \downarrow 0$ with $x \in \overline{D(\V)}$, then we say that the gradient flow $(\gamma_t)$ starts at $x$.
\end{itemize}
\bigskip

After this premise, let us fix the framework we shall work within.

\begin{setting}
\label{setting}
On the space $(\X,\sfd)$ and on the functional $\V : \X \to \mathbb{R} \cup \{+\infty\}$ we make the following assumptions:
\begin{enumerate}[label={(A\arabic*})]
\item
\label{item:complete_sep}
$(\X,\sfd)$ is a complete and separable metric space;
\item
\label{item:V_locally_bounded_below}
$\V$ is lower semicontinuous with dense domain, i.e.\ $\overline{D(\V)} = \X$, and locally bounded from below in the following sense: for any $\sfd$-bounded set $B \subset \X$ there exists $c_B \in \mathbb{R}$ such that $\V(x) \geq c_B$ for all $x \in B$;
\item
\label{item:generate_lambda_flow}
there exists $\lambda \in \mathbb{R}$ such that for any $x \in \X$ there exists an $\EVI_\lambda$-gradient flow of $\V$ starting from $x$.
In view of \eqref{eq:contraction}, the corresponding 1-parameter semigroup shall be denoted $\S_t$.
\end{enumerate}
\end{setting}

Sometimes, and always explicitly indicated, we will also use the following extra hypothesis. 

\begin{hyp} \label{hyp1}
There exists a Hausdorff topology $\sigma$ on $\X$ such that $\sfd$-bounded sets are sequentially $\sigma$-compact.
Moreover, the distance $\sfd$ and the slope $|\partial\V|$ are $\sigma$-sequentially lower semicontinuous. 
\end{hyp}
\begin{rmk}\label{rmk18} {\rm
Assumption \ref{hyp1} is in particular valid provided $(\X,\sfd)$ is a locally compact space. 
Indeed, in this case the metric topology of $(\X,\sfd)$ is an admissible candidate for $\sigma$, since bounded sets are relatively compact (by \cite[Proposition 2.5.22]{BBI01}) and the lower semicontinuity of the slope $|\partial\V|$ w.r.t.\ the metric topology is a consequence of the forthcoming identity \eqref{eq:local=global}.
}\fr 
\end{rmk}
\begin{rmk}\label{rmk19} {\rm 
Assumption \ref{hyp1} implies that $\sfd$-converging sequences are also $\sigma$-converging.
Indeed, given $(x_n) \subset \X$ with $\sfd(x,x_n) \to 0$ as $n \to \infty$ for some limit $x \in \X$, by Assumption \ref{hyp1} and by the boundedness of $(x_n)_n$ there exist a subsequence $(x_{n_k})_k$ and $y \in \X$ such that
\[
x_{n_k} \stackrel{\sigma}{\to} y \quad \textrm{as}\quad k \to \infty.
\]
Since $\sfd$ is $\sigma$-sequentially lower semicontinuous (again by Assumption \ref{hyp1}) we deduce that
\[
\sfd(x,y) \leq \liminf_{k \to \infty}\sfd(x,x_{n_k}) = \lim_{n \to \infty}\sfd(x,x_n) = 0,
\]
whence $x=y$.
This classically implies that the whole sequence converges, $x_{n} \stackrel{\sigma}{\to} x$.
}\fr 
\end{rmk}
We list now some useful properties of $\EVI$-gradient flows, which hold true in Setting \ref{setting} and that we shall use extensively in the sequel.
First of all, whenever $x\in X$ is the starting point of an $\EVI_\lambda$ flow, the slope there (a \emph{local} object, a priori) admits the \emph{global} representation
\begin{equation}
|\partial\V|(x) = \sup_{y \neq x}\Big(\frac{\V(x) - \V(y)}{\sfd(x,y)} + \frac{\lambda}{2}\sfd(x,y)\Big)^+ ,
\label{eq:local=global}
\end{equation}
see \cite[Proposition 3.6]{MuratoriSavare20}.
Since we assume that any $x\in X$ is the starting point of an $\EVI_\lambda$-gradient flow, this means in particular that $|\partial\V|:X \to [0,\infty]$ is lower semicontinuous, since so is the right-hand side above (as a supremum of lower semicontinuous functions).
This also implies by \cite[Theorem 1.2.5]{AmbrosioGigliSavare08} that $|\partial\V|$ is a \emph{strong upper gradient} for $\V$ in the sense of \cite[Definition 1.2.1]{AmbrosioGigliSavare08}, namely: for every $(\g_t) \in AC([0,1],\X)$, the map $t \mapsto \V(\g_t)$ is Borel and
\begin{equation}
\label{eq:strong upper gradient}
|\V(\g_{t_1}) - \V(\g_{t_0})| \leq \int_{t_0}^{t_1} |\partial\V|(\g_t)|\dot \g_t|\,\d t, \qquad \forall 0 \leq t_0 \leq t_1 \leq 1,
\end{equation}
the right-hand side being possibly infinite.
In addition, if $(\gamma_t)$ is an $\EVI_\lambda$-gradient flow of $\V$ then the following hold \cite[Theorem 3.5]{MuratoriSavare20}:
\begin{enumerate}[label={(\roman*)}]
\item
If $(\gamma_t)$ starts from $x \in \overline{D(\V)}$ and $(\tilde{\gamma}_t)$ is a second $\EVI_\lambda$-gradient flow of $\V$ starting from $y \in \overline{D(\V)}$ respectively, then
\begin{equation}
\label{eq:contraction}
\sfd^2(\gamma_t,\tilde{\gamma}_t) \leq e^{-2\lambda t}\sfd^2(x,y), \qquad \forall t \geq 0.
\end{equation}
This means that EVI-gradient flows are unique (provided they exist) and thus if there exists an EVI-gradient flow $(\gamma_t)$ starting from $x$, then a 1-parameter semigroup $(\S_t)_{t>0}$ is unambiguously associated to it via $\S_tx = \gamma_t$.
\item
The maps $t \mapsto \gamma_t$ and $t \mapsto \V(\gamma_t)$ are locally Lipschitz in $(0,\infty)$ with values in $\X$ and $\R$, respectively, and satisfy the \emph{Energy Dissipation Equality}
\begin{equation}\label{eq:speed=slope}
-\ddt \V(\gamma_t) = \frac{1}{2}|\dot{\gamma}_t|^2 + \frac{1}{2}|\partial \V|^2(\gamma_t) = |\dot{\gamma}_t|^2 = |\partial \V|^2(\gamma_t), \qquad \textrm{for a.e. } t>0.
\end{equation}
\item
The map
\begin{equation}\label{eq:monotonicity-slope}
t \mapsto e^{\lambda t}|\partial \V|(\gamma_t) \quad \textrm{is non-increasing}.
\end{equation}
\item
If $(\gamma_t)$ starts from $x$ and $y \in D(|\partial \V|)$, then
\begin{equation}\label{eq:regularization}
|\partial \V|^2(\gamma_t) \leq \frac{1}{2e^{\lambda t}-1}|\partial \V|^2(y) + \frac{1}{I_\lambda(t)^2}\sfd^2(x,y), \qquad \textrm{provided } -\lambda t < \log 2,
\end{equation}
where $I_\lambda(t) := \int_0^t e^{\lambda s}\,\d s$.
\end{enumerate}
We emphasize that these properties directly follow from the very definition \eqref{eq:EVIlambda} of gradient flows, and a priori do not require $\V$ to be geodesically $\lambda$-convex.
Although analogous statements can be found in \cite{AmbrosioGigliSavare08} and \cite{AmbrosioGigli11} under convexity assumptions on $\V$, the latter are essentially needed to grant \emph{existence} of EVI-gradient flows.
It is important to stress this fact because in Setting \ref{setting} we \emph{assume} that for any $x \in \X$ there exists an $\EVI_\lambda$-gradient flow of $\V$ starting there, which from \cite{DaneriSavare08} is known to imply that $\V$ is geodesically convex.
In Section \ref{sec:metric spaces} we also provide an alternative proof of this latter fact, whence the necessity for us to avoid all properties of $\EVI$-gradient flows actually relying on geodesic convexity.

We conclude this preliminary part with a general integrability result about $\EVI_\lambda$-gradient flows, which we could not find explicitly written in the literature and will be used later on in the proof of Lemma~\ref{lem:technical formula}.

\begin{lem}\label{lem:integrability}
With the same assumptions and notations as in Setting \ref{setting}, let $x \in \X$. Then
\begin{center}
$t \mapsto \V(\S_t x)$ is integrable in $[0,T]$, for all $T>0$,
\end{center}
regardless of whether $\V(x)$ is initially finite or not.
\end{lem}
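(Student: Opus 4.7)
The plan is to test the defining inequality \eqref{eq:EVIlambda} against a reference point $y \in D(\V)$, which exists by the density assumption $\overline{D(\V)} = \X$, and then rigorously integrate the resulting differential inequality down to the initial time $t = 0$. Measurability of $t \mapsto \V(\S_t x)$ comes for free: since the flow starts at $x$, the curve $t \mapsto \S_t x$ is continuous on $[0,T]$, and $\V$ is Borel by lower semicontinuity.

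The lower bound comes from the local boundedness assumption \ref{item:V_locally_bounded_below}: continuity of $t \mapsto \S_t x$ on the closed interval $[0,T]$ makes $K := \{\S_t x : t \in [0,T]\}$ a $\sfd$-bounded set, so some constant $c_K \in \R$ satisfies $\V(\S_t x) \geq c_K$ for all $t \in [0,T]$. In particular the negative part $[\V(\S_t x)]^-$ is bounded on $[0,T]$, hence integrable.

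For the upper bound, fix $y \in D(\V)$ and $0 < s < T$. On $[s,T]$ the local Lipschitz regularity of the flow (property (ii)) makes $t \mapsto \sfd^2(\S_t x, y)$ absolutely continuous, so integrating \eqref{eq:EVIlambda} in time yields
\begin{equation*}
\int_s^T \V(\S_t x)\,\dt \leq (T-s)\V(y) + \tfrac{1}{2}\sfd^2(\S_s x, y) - \tfrac{1}{2}\sfd^2(\S_T x, y) - \tfrac{\lambda}{2}\int_s^T \sfd^2(\S_t x, y)\,\dt.
\end{equation*}
As $s \downarrow 0$, continuity of the flow at the origin gives $\sfd^2(\S_s x, y) \to \sfd^2(x,y) < \infty$, and the distance integral on the right converges by dominated convergence since the integrand stays uniformly bounded on $K$. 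Splitting $\V(\S_t x) = [\V(\S_t x)]^+ - [\V(\S_t x)]^-$ on the left, the negative part passes to the limit by bounded convergence thanks to the lower bound just established, so monotone convergence forces $\int_0^T [\V(\S_t x)]^+\,\dt < \infty$. Combined with the uniform lower bound, this yields integrability.

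The only subtlety lies at $t=0$: the curve $t \mapsto \S_t x$ is generally not absolutely continuous there, so \eqref{eq:EVIlambda} cannot be integrated directly down to the origin. The workaround of operating on $[s,T]$ and passing to the limit $s \downarrow 0$ relies crucially on \ref{item:V_locally_bounded_below} to prevent an uncontrolled negative contribution in the case $\V(x) = +\infty$; in that case $[\V(\S_t x)]^+$ may blow up as $t \downarrow 0$, yet the EVI estimate together with the separate lower bound still force its integral to be finite.
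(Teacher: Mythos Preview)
Your proof is correct and follows essentially the same approach as the paper: both integrate the \eqref{eq:EVIlambda} inequality against a reference point $y\in D(\V)$ on intervals $[s,T]$ with $s>0$, use the local boundedness assumption \ref{item:V_locally_bounded_below} on the bounded trajectory to control $\V(\S_t x)$ from below, and then let $s\downarrow 0$ exploiting the continuity of the flow at the origin. Your version is slightly more explicit about measurability and the monotone/bounded convergence split of the positive and negative parts, but the argument is the same.
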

On intervals $[\eps,T]$ this computation is easily justified by the fact that $t \mapsto \V(\S_t x)$ is locally Lipschitz in $(0,\infty)$, hence locally integrable therein.
But this computation is legitimate even if $\eps=0$, as we are going to see.

\begin{proof}
Let $x \in \X$ and $T>0$ be as in our statement.
Since $\V$ is bounded from below on $\sfd$-bounded sets by \ref{item:V_locally_bounded_below}, and because $(\S_t x)_{t \in [0,T]}$ is bounded, there exists $c \in \mathbb{R}$ such that $\V(\S_t x) \geq c$ for all $t \in [0,T]$.
Combining with \eqref{eq:EVIlambda} this gives
\[
c \leq \V(\S_t x) \leq \V(y) - \frac{1}{2}\frac{\d}{\d t}\sfd^2(\S_t x,y) - \frac{\lambda}{2}\sfd^2(\S_t x,y)
\]
for any $y \in D(\V)$ and $t \in (0,T]$.
Integrating from $t=\eta>0$ to $t=T$ gives
\[
\begin{split}
c(T-\eta) \leq \int_\eta^T \V(\S_t x)\,\d t & \leq (T-\eta)\V(y) - \frac 12 \Big(\sfd^2(\S_T x,y) - \sfd^2(\S_\eta x,y)\Big) \\
& \qquad - \frac{\lambda}{2}\int_\eta^T \sfd^2(\S_t x,y)\,\d t.
\end{split}
\]
As $t \mapsto \V(\S_t x)$ is bounded from below on $[0,T]$ and the right-hand side has a finite limit as $\eta \downarrow 0$ (thanks to the fact that $t \mapsto \S_t x$ is $\sfd$-continuous on $[0,\infty)$ by the very definition of $\EVI_\lambda$-gradient flow), we deduce the desired integrability.
\end{proof}
\subsection{A pseudo-Riemannian computation}

In this section the formal Riemannian computations carried out at the beginning of Section \ref{sec:heuristics}, and more precisely \eqref{eq:Riemannian_computation}, will be reproduced rigorously in the abstract Setting \ref{setting}.
To this aim, a key role will be played by the following purely metric estimate:

\begin{lem}\label{lem:technical formula}
With the same assumptions and notations as in Setting \ref{setting}, let $(\gamma_t) \in AC([0,1],\X)$ with $\V(\g_0),\V(\g_1) < \infty$.
For any fixed absolutely continuous function $h : [0,1] \to \R$ with $h(t)>0$ for all $t \in (0,1)$ let
\[
\tilde \g_t:=\S_{h(t)}\g_t, \qquad t \in [0,1],
\]
and for any $0\leq t_0< t_1\leq 1$ write
\begin{equation}
\label{eq:def_tdelta}
t^+:=\left\{
\begin{array}{ll}
 t_1 & \mbox{if }h(t_1)\geq h(t_0)\\
 t_0 & \mbox{otherwise}
\end{array}
\right.
\qqtext{and}
t^-:=\left\{
\begin{array}{ll}
 t_0 & \mbox{if }h(t_1)\geq h(t_0)\\
 t_1 & \mbox{otherwise}
\end{array}
\right. .
\end{equation}
Then we have the exact estimate
\begin{equation}
\label{eq:pseudo_Riemannian_estimate_discrete}
\begin{split}
\frac{1}{2}\left|\frac{\sfd(\tilde \g_{t_1},\tilde \g_{t_0})}{t_1-t_0}\right|^2
& + 
\frac{1}{2\lambda^2} |\partial \V|^2(\tilde \g_{t^+})\frac{e^{\lambda(h(t_1)-h(t_0))} + e^{\lambda(h(t_0)-h(t_1))} - 2}{(t_1-t_0)^2} \\
& +
\frac{1-e^{-\lambda (h(t^+)-h(t^-))}}{\lambda(t^+ - t^-)} \cdot\frac{\V(\tilde \g_{t_1})-\V(\tilde \g_{t_0})}{t_1-t_0}\\
& \qquad \leq 
\frac{1}{2} e^{-\lambda (h(t_1)+h(t_0))}\left|\frac{\sfd(\g_{t_1},\g_{t_0})}{t_1-t_0}\right|^2.
\end{split}
\end{equation}
\end{lem}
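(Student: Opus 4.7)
The plan is to translate the Riemannian identity leading to \eqref{eq:Riemannian_computation_pre} into a metric estimate by replacing the differential chain rule with integrated versions of the $\EVI_\lambda$ axiom \eqref{eq:EVIlambda} and its consequences. Since the statement is symmetric under swapping $(t_0,t_1)$, I may assume without loss of generality that $s_0:=h(t_0)\leq h(t_1)=:s_1$, so that $t^+=t_1$, $t^-=t_0$, and I set $a:=s_1-s_0\geq 0$. The central auxiliary object is
\[
\psi(r):=\sfd^2(\S_r\gamma_{t_1},\tilde\gamma_{t_0}),\qquad r\in[s_0,s_1],
\]
whose endpoint value $\psi(s_1)=\sfd^2(\tilde\gamma_{t_1},\tilde\gamma_{t_0})$ is exactly the quantity to bound.

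Applying \eqref{eq:EVIlambda} to the gradient flow $r\mapsto\S_r\gamma_{t_1}$ with the frozen competitor $y=\tilde\gamma_{t_0}$ yields $\frac{\d}{\d r}(e^{\lambda r}\psi(r))\leq 2e^{\lambda r}(\V(\tilde\gamma_{t_0})-\V(\S_r\gamma_{t_1}))$ for a.e.\ $r>0$. Integrating from $r=s_0$ to $r=s_1$---with Lemma~\ref{lem:integrability} securing integrability down to $r=0$ in case $s_0=0$---estimating the starting value by contractivity \eqref{eq:contraction} via $\psi(s_0)=\sfd^2(\S_{s_0}\gamma_{t_1},\S_{s_0}\gamma_{t_0})\leq e^{-2\lambda s_0}\sfd^2(\gamma_{t_1},\gamma_{t_0})$, and then multiplying by $e^{-\lambda s_1}$ gives
\[
\sfd^2(\tilde\gamma_{t_1},\tilde\gamma_{t_0})\leq e^{-\lambda(s_0+s_1)}\sfd^2(\gamma_{t_1},\gamma_{t_0})+2\int_{s_0}^{s_1}e^{\lambda(r-s_1)}\bigl(\V(\tilde\gamma_{t_0})-\V(\S_r\gamma_{t_1})\bigr)\,\d r.
\]

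To manufacture the slope term at the correct basepoint I telescope through $\tilde\gamma_{t_1}$, writing $\V(\tilde\gamma_{t_0})-\V(\S_r\gamma_{t_1})=[\V(\tilde\gamma_{t_0})-\V(\tilde\gamma_{t_1})]+[\V(\tilde\gamma_{t_1})-\V(\S_r\gamma_{t_1})]$. The first bracket is independent of $r$ and integrates against $2e^{\lambda(r-s_1)}$ to the target $\frac{2(1-e^{-\lambda a})}{\lambda}(\V(\tilde\gamma_{t_0})-\V(\tilde\gamma_{t_1}))$. For the second bracket, the energy dissipation equality \eqref{eq:speed=slope} along the flow $r\mapsto\S_r\gamma_{t_1}$ gives $\V(\tilde\gamma_{t_1})-\V(\S_r\gamma_{t_1})=-\int_r^{s_1}|\partial\V|^2(\S_u\gamma_{t_1})\,\d u$; swapping the order of integration by Fubini then converts the corresponding contribution into
\[
-\frac{2}{\lambda}\int_{s_0}^{s_1}|\partial\V|^2(\S_u\gamma_{t_1})\bigl(e^{\lambda(u-s_1)}-e^{-\lambda a}\bigr)\,\d u,
\]
whose weight $\frac{1}{\lambda}(e^{\lambda(u-s_1)}-e^{-\lambda a})$ turns out to be nonnegative regardless of the sign of $\lambda$. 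The monotonicity \eqref{eq:monotonicity-slope} yields $|\partial\V|^2(\S_u\gamma_{t_1})\geq e^{2\lambda(s_1-u)}|\partial\V|^2(\tilde\gamma_{t_1})$; because the weight has constant sign, substituting this lower bound indeed produces an \emph{upper} bound on the integral, and the elementary computation $\int_{s_0}^{s_1}\bigl(e^{\lambda(s_1-u)}-e^{2\lambda(s_1-u)-\lambda a}\bigr)\,\d u=\frac{e^{\lambda a}+e^{-\lambda a}-2}{2\lambda}$ delivers the promised $-\frac{e^{\lambda a}+e^{-\lambda a}-2}{\lambda^2}|\partial\V|^2(\tilde\gamma_{t_1})$.

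Collecting the three terms, rearranging, and dividing by $2(t_1-t_0)^2$ yields \eqref{eq:pseudo_Riemannian_estimate_discrete}; the case $\lambda=0$ is recovered either by passing to the limit (the ratios $\frac{1-e^{-\lambda a}}{\lambda}\to a$ and $\frac{e^{\lambda a}+e^{-\lambda a}-2}{\lambda^2}\to a^2$ remain finite) or by redoing the computation with the $\lambda=0$ form of the EVI. The conceptual difficulty is the asymmetric choice of $\psi$---flowing only $\gamma_{t_1}$ while keeping $\tilde\gamma_{t_0}$ frozen---which is precisely what forces the slope to localize at $\tilde\gamma_{t^+}=\tilde\gamma_{t_1}$ after the Fubini swap; the only technical care is in checking that the sign of the Fubini weight is uniform in $\lambda\in\R$, so that the monotonicity of $|\partial\V|$ along the flow can be invoked in the correct direction of inequality.
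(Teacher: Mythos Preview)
Your proof is correct and follows essentially the same approach as the paper's: integrate the $\EVI_\lambda$ inequality for the flow $r\mapsto\S_r\gamma_{t_1}$ against the frozen competitor $\tilde\gamma_{t_0}$, bound the initial value via contractivity, telescope through $\V(\tilde\gamma_{t_1})$, and use the energy dissipation equality together with slope monotonicity to produce the Fisher term. The only cosmetic difference is that you organize the slope computation via a Fubini swap, whereas the paper bounds the inner integral $-\int_s^{h(t_1)}|\partial\V|^2(\S_\tau\gamma_{t_1})\,\d\tau$ pointwise in $s$ first and then integrates; the resulting constants and sign checks are identical.
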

\noindent
Here we use the convention that $(+\infty)\times 0=0$ whenever $|\partial \V|(\tilde\g_{t^+})=+\infty$ and $h(t_0)=h(t_1)$ in the second term on the left-hand side of \eqref{eq:pseudo_Riemannian_estimate_discrete}.
Since we assume that $h(t)>0$ for $t\in(0,1)$, and because any $\EVI_\lambda$-gradient flow immediately falls within $D(|\partial\V|)$ by standard regularizing effects, this latter case is in fact only possible if $t_0=0$, $t_1=1$, and $h(t_0)=h(t_1)=0$.
In that case $\tilde\gamma_{0}=\gamma_0$ and $\tilde\gamma_1=\gamma_1$, the third term in the left-hand side also cancels owing to $e^{-\lambda (h(t^+)-h(t^-))}=1$, and \eqref{eq:pseudo_Riemannian_estimate_discrete} then holds as a trivial equality.

We shall rely on this lemma later on in two different ways:
First, fixing $t_0=0$ and letting $t_1\downarrow 0$ (resp. fixing $t_1=1$ and letting $t_0\uparrow 1$) to control in Lemma~\ref{lem:Vqtilda_is_AC} the continuity of $t\mapsto \V(\tilde\g_{t})$ at the boundaries $t=0,1$, and second, fixing $t_0\in(0,1)$ and letting $t_1\to t_0$ to obtain in Proposition~\ref{prop:pseudo_Riemannian_estimate} a pointwise differential estimate similar to \eqref{eq:Riemannian_computation}.

\begin{rmk}\label{rmk:fancy}{\rm
The times $t^\pm$ are just a convenient notation, ordered as $h(t^-)\leq h(t^+)$. Note that in our estimate \eqref{eq:pseudo_Riemannian_estimate_discrete} the Fisher information $|\partial \V|^2(\tilde \g_{t^+})$ is evaluated at the time $t=t^+$ for which the ``smoothing time'' $s=h(t_0)$ or $s=h(t_1)$ is the largest, i.e.\ where the regularizing vertical flow has been run for the longest time.
This is somehow natural, as this specific point is ``better'' than the other one in terms of regularity.
}\fr 
\end{rmk}

\begin{proof}
By symmetry we only discuss the case $h(t_1) \geq h(t_0)$, i.e.\ $t^+ = t_1$ and $t^- = t_0$.
As already mentioned, if $h(t_0)=h(t_1)=0$ our statement is actually vacuous, thus it is not restrictive to further assume $h(t_1)>0$.
Let us write for simplicity
$$
\hat \g_{t_1}:= \S_{h(t_0)}\g_{t_1}.
$$
From an intuitive point of view, this corresponds to freezing a ``vertical time'' $s=h(t_0)$ and ``translating'' $\tilde \g_{t_0}$ in the ``horizontal'' $t$ direction parallel to the curve $\g$ until $t_1$. Here, in the ``vertical'' direction above $t_1$ the smoothing semigroup $\S_s$ associated with $\V$ has been run at least for a strictly positive time $h(t_1)>0$, so that by \eqref{eq:regularization} the solution of the ``vertical'' gradient flow at that time lies within the regular domain $\X_1 = D(|\partial \V|) \subset \X_0 = D(\V) \subset \X$, see Figure~\ref{fig:horizontal-vertical}.

  \begin{figure}[h!]
  \begin{center}
  \begin{tikzpicture}
  [
nodewitharrow/.style 2 args={                
            decoration={             
                        markings,   
                        mark=at position {#1} with { 
                                    \arrow{stealth},
                                    \node[transform shape,above] {#2};
                        }
            },
            postaction={decorate}
}
]
  
  \filldraw (3,1) circle (2pt);
  \node [below] at (3,1) {$\g_{t_0}$};
  
  \filldraw (7,1.8) circle (2pt);
  \node [below] at (7,1.8) {$\g_{t_1}$};
  
  \filldraw (3,4) circle (2pt);
  \node [left] at (3,4) {$\tilde \g_{t_0}$};
  
  \filldraw (7,4.8) circle (2pt);
   \node [right] at (7,4.8) {$\hat \g_{t_1}\,\,[s=h(t_0)]$};
   
   \filldraw (7,6) circle (2pt);
   \node [right] at (7,6) {$\tilde \g_{t_1}\,\,[s=h(t_1)]$};
  \draw [dashed,thick] (1,.5) to [out=0,in=20] (3,1);
  \draw [nodewitharrow={0.5}{$t$}, thick] (3,1) to [out=20,in=-180] (7,1.8);
  \draw [ thick] (3,4) to [out=20,in=-180] (7,4.8);
  \draw [dashed,thick] (7,1.8) to [out=0, in=160] (9,1.2);
  \draw [thick] (3,4) to [out=40,in=-160] (7,6);
  \draw  [] (3,1) to (3,4);
  \draw  [nodewitharrow={0.2}{\rotatebox{-90}{$s$}}] (7,1.8) to (7,6);
  \end{tikzpicture}
  \end{center}
  \caption{The horizontal and vertical curves}
\label{fig:horizontal-vertical}
  \end{figure}

The first step is to write \eqref{eq:EVIlambda} for $s\mapsto \S_s(\g_{t_1})$ with $\tilde \g _{t_0}$ as a reference point, namely
\[
\frac 12 \frac{\d}{\d s}\sfd^2(\S_s \g_{t_1},\tilde \g_{t_0}) + \frac{\lambda}{2}\sfd^2(\S_s \g_{t_1},\tilde \g_{t_0}) + \V(\S_s \g_{t_1}) \leq \V(\tilde \g_{t_0}),
\]
which holds true for a.e.\ $s \in [0,h(t_1)]$ in the ``vertical'' direction. This inequality can be equivalently rewritten as
\begin{equation}\label{eq:first step}
\frac 12 \frac{\d}{\d s}\Big(e^{\lambda s}\sfd^2(\S_s \g_{t_1},\tilde \g_{t_0})\Big) \leq e^{\lambda s}\Big(\V(\tilde \g_{t_0})-\V(\S_s \g_{t_1})\Big).
\end{equation}
Note that this estimate carries significant information if and only if the reference point has finite entropy, i.e.\ $\V(\tilde \g_{t_0})<\infty$ in the right-hand side.
This holds true for $t_0\in(0,1)$ because $\tilde \g_{t_0}$ is the $\EVI_\lambda$-gradient flow of $\V$ starting from $\g_{t_0}$ at a strictly positive time $s=h(t_0)>0$, but also for $h(t_0)=0$ if $t_0=0$ since in this case $\tilde \g_0=\g_0$ is assumed to have finite entropy.

Integrating \eqref{eq:first step} from $s=h(t_0)$ to $s=h(t_1)$ gives
\begin{equation}
\label{eq:legitimate}
\begin{split}
\frac{1}{2}e^{\lambda h(t_1)}\sfd^2(\tilde \g_{t_1},\tilde \g_{t_0}) & - \frac{1}{2}e^{\lambda h(t_0)}\sfd^2(\hat \g_{t_1},\tilde \g_{t_0}) \\
& \leq \int_{h(t_0)}^{h(t_1)} e^{\lambda s}\Big(\V(\tilde \g_{t_0}) - \V(\S_s\g_{t_1})\Big) \,\d s \\
& = \int_{h(t_0)}^{h(t_1)} e^{\lambda s}\Big(\big(\V(\tilde \g_{t_0})- \V(\tilde \g_{t_1})\big) + \big(\V(\tilde \g_{t_1}) - \V(\S_s\g_{t_1})\big)\Big) \,\d s \\
& = \int_{h(t_0)}^{h(t_1)} e^{\lambda s}\Big(\V(\tilde \g_{t_1})-\V(\S_s\g_{t_1})\Big) \,\d s \\
& \qquad - \frac{e^{\lambda h(t_1)}-e^{\lambda h(t_0)}}{\lambda} \Big(\V(\tilde \g_{t_1})- \V(\tilde \g_{t_0})\Big).
\end{split}
\end{equation}
If $h(t_0)>0$ this computation is legitimate because $s \mapsto \S_s \g_{t_1}$ and $s \mapsto \V(\S_s \g_{t_1})$ are locally Lipschitz in $(0,\infty)$, hence $s \mapsto \sfd^2(\S_s \g_{t_1},\tilde \g_{t_0})$ and $s \mapsto \V(\S_s \g_{t_1})$ are locally integrable therein.
But this computation is also justified when $h(t_0)=0$ by Lemma \ref{lem:integrability}.
More specifically $s \mapsto \V(\S_s\g_{t_1})$ is absolutely integrable on $[0,T]$ for any $T>0$ and a fortiori so is $s \mapsto e^{\lambda s}\V(\S_s\g_{t_1})$.

Now let us estimate the terms in \eqref{eq:legitimate} to get \eqref{eq:pseudo_Riemannian_estimate_discrete}.
First, since $\hat \g_{t_1}=\S_{h(t_0)}\g_{t_1}$, $\tilde \g_{t_0} = \S_{h(t_0)}\g_{t_0}$, and $\S_{h(t_0)}(\cdot)$ is $\lambda$-contractive by \eqref{eq:contraction}, we observe that the second term in the left-hand side of \eqref{eq:legitimate} can be controlled as
\begin{equation}\label{eq:easy term}
\sfd^2(\hat \g_{t_1},\tilde \g_{t_0}) = \sfd^2(\S_{h(t_0)}\g_{t_1},\S_{h(t_0)}\g_{t_0}) \leq e^{-2\lambda h(t_0)} \sfd^2(\g_{t_1},\g_{t_0}).
\end{equation}
On the right-hand side, let us define
\[
I := \int_{h(t_0)}^{h(t_1)} e^{\lambda s}\Big(\V(\tilde \g_{t_1})-\V(\S_s\g_{t_1})\Big) \,\d s.
\]
This integral is clearly non-positive by \eqref{eq:speed=slope}, but we need a finer analysis. To this aim, for fixed $0<s<h(t_1)$ let us write
\[
\begin{split}
\V(\tilde \g_{t_1}) - \V(\S_s\g_{t_1}) & = \V(\S_{h(t_1)}\g_{t_1}) - \V(\S_s\g_{t_1}) \\
& = \int_{s}^{h(t_1)}\frac{\d}{\d\tau} \V(\S_{\tau}\g_{t_1})\,\d\tau = -\int _{s}^{h(t_1)} |\partial \V|^2(\S_{\tau}\g_{t_1})\,\d\tau,
\end{split}
\]
where the second equality holds due to $\tau\mapsto \V(\S_\tau \g_{t_1})$ being Lipschitz on $[s,h(t_1)]$, and the third one stems from \eqref{eq:speed=slope} for the gradient flow $\tau\mapsto \S_\tau \g_{t_1}$.
By \eqref{eq:monotonicity-slope}
\[
\begin{split}
-\int _{s}^{h(t_1)} |\partial \V|^2(\S_{\tau}\g_{t_1})\,\d\tau & \leq -\int_s^{h(t_1)} |\partial\V|^2(\S_{h(t_1)}\g_{t_1}) e^{2\lambda(h(t_1)-\tau)}\,\d\tau \\
& = - e^{2\lambda h(t_1)}|\partial\V|^2(\S_{h(t_1)}\g_{t_1}) \int_s^{h(t_1)} e^{-2\lambda\tau}\,\d\tau \\
& = \frac{1}{2\lambda}\Big(1 - e^{2\lambda(h(t_1)-s)}\Big) |\partial\V|^2(\S_{h(t_1)}\g_{t_1}),
\end{split}
\]
so that, as a consequence,
\[
\begin{split}
I & \leq \frac{1}{2\lambda}\int_{h(t_0)}^{h(t_1)}e^{\lambda s}\Big(1 - e^{2\lambda (h(t_1)-s)}\Big) |\partial\V|^2(\S_{h(t_1)}\g_{t_1})\,\d s \\
& = \frac{1}{2\lambda}|\partial\V|^2(\S_{h(t_1)}\g_{t_1})\int_{h(t_0)}^{h(t_1)}\Big(e^{\lambda s} - e^{2\lambda h(t_1)}\cdot e^{-\lambda s}\Big)\,\d s \\
& = -\frac{1}{2\lambda^2}|\partial\V|^2(\tilde \g_{t_1})e^{\lambda h(t_1)}\Big(e^{\lambda(h(t_1)-h(t_0))} + e^{\lambda(h(t_0)-h(t_1))} - 2\Big).
\end{split}
\]
Plugging this estimate together with \eqref{eq:easy term} into \eqref{eq:legitimate} and dividing by $(t_1-t_0)^2>0$ entails our claim.
\end{proof}
We also need to study the behaviour of the ``regularized'' curve $\tilde \g_t := \S_{h(t)}\g_t$ and of the entropy $\V$ along it: this is the content of the following two results.

\begin{lem}
\label{lem:qtilda_is_AC}
With the same assumptions and notations as in Setting \ref{setting}, if $(\g_t) \in AC([0,1],\X)$ and $h : [0,1] \to \R$ is absolutely continuous with $h(t)>0$ for all $t \in (0,1)$, then the curve $\tilde \g_t := \S_{h(t)}\g_t$ belongs to $AC_{loc}((0,1),\X) \cap C([0,1],\X)$.
\end{lem}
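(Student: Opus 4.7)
The plan is to decouple, via the triangle inequality, the ``horizontal'' displacement along $\g$ from the ``vertical'' motion along the semigroup $\S_\tau$. For any $0<s\le t<1$, I write
\[
\sfd(\tilde\g_t,\tilde\g_s)\le \sfd(\S_{h(t)}\g_t,\S_{h(t)}\g_s)+\sfd(\S_{h(t)}\g_s,\S_{h(s)}\g_s).
\]
The first (horizontal) term is controlled by the $\EVI_\lambda$-contractivity \eqref{eq:contraction}, giving $e^{-\lambda h(t)}\sfd(\g_t,\g_s)$. The second (vertical) term is estimated via the metric-speed identity \eqref{eq:speed=slope} for the flow $\tau\mapsto \S_\tau\g_s$:
\[
\sfd(\S_{h(t)}\g_s,\S_{h(s)}\g_s)\le \int_{h_\wedge}^{h_\vee}|\partial\V|(\S_\tau\g_s)\,\d\tau,
\]
with $h_\wedge:=\min(h(s),h(t))$ and $h_\vee:=\max(h(s),h(t))$, both strictly positive since $s,t\in(0,1)$.

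To obtain local absolute continuity on $(0,1)$ I fix a compact subinterval $[a,b]\subset(0,1)$ and set $\delta:=\min_{[a,b]}h>0$ and $M:=\max_{[a,b]}h<\infty$. The key step is a joint uniform bound $|\partial\V|(\S_\tau\g_s)\le C_1$ for all $(\tau,s)\in[\delta,M]\times[a,b]$, obtained as follows. Fix any reference $y\in D(|\partial\V|)$ (non-empty, since every orbit enters $D(|\partial\V|)$ instantaneously) and pick $\delta_0\in(0,\delta)$ small enough that $-\lambda\delta_0<\log 2$. The regularization estimate \eqref{eq:regularization} applied at time $\delta_0$ with starting point $\g_s$, combined with the $\sfd$-boundedness of $\{\g_s:s\in[a,b]\}$ (from continuity of $\g$ on the compact $[a,b]$), yields $\sup_{s\in[a,b]}|\partial\V|(\S_{\delta_0}\g_s)\le C_0^{1/2}$ for some finite $C_0$. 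The monotonicity \eqref{eq:monotonicity-slope} of $\tau\mapsto e^{\lambda\tau}|\partial\V|(\S_\tau\g_s)$ then propagates this to $|\partial\V|(\S_\tau\g_s)\le e^{|\lambda|M}C_0^{1/2}=:C_1$ on $[\delta_0,M]\supset[\delta,M]$. Combining everything yields
\[
\sfd(\tilde\g_t,\tilde\g_s)\le e^{|\lambda|M}\sfd(\g_t,\g_s)+C_1|h(t)-h(s)|\qquad\forall\,s,t\in[a,b],
\]
from which $\tilde\g\in AC([a,b],\X)$ follows by absolute continuity of $\g$ and $h$. Varying $[a,b]$ exhausts $(0,1)$.

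For continuity at the endpoints only $t\downarrow 0$ needs discussion ($t\uparrow 1$ being symmetric). A similar decomposition gives
\[
\sfd(\tilde\g_t,\tilde\g_0)\le e^{-\lambda h(t)}\sfd(\g_t,\g_0)+\sfd(\S_{h(t)}\g_0,\S_{h(0)}\g_0),
\]
where the first term vanishes by continuity of $\g$ and $h$ at $0$, while the second is immediate if $h(0)>0$ (by local Lipschitzness of $\tau\mapsto\S_\tau\g_0$ on $(0,\infty)$) and follows from the defining property $\S_\tau\g_0\to\g_0$ as $\tau\downarrow 0$ if $h(0)=0$.

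The main technical obstacle is the uniform slope bound jointly in $(\tau,s)$: the regularization estimate \eqref{eq:regularization} is intrinsically short-time (it requires $-\lambda\tau<\log 2$, a genuine restriction when $\lambda<0$), so reaching the full vertical range $\tau\in[\delta,M]$ requires the two-step argument combining \eqref{eq:regularization} at a small auxiliary time $\delta_0$ with the exponential control of the slope along the flow provided by \eqref{eq:monotonicity-slope}.
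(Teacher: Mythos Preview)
Your proof is correct and follows essentially the same approach as the paper's: a triangle-inequality decomposition into horizontal (contractivity) and vertical (flow-speed) pieces, a uniform slope bound on compact subintervals obtained by combining the short-time regularization estimate \eqref{eq:regularization} with the monotonicity \eqref{eq:monotonicity-slope}, and the same endpoint argument via continuity of $\tau\mapsto\S_\tau\g_0$. The only cosmetic difference is your choice of intermediate point $\S_{h(t)}\g_s$ versus the paper's $\S_{h(t_0)}\g_{t_1}$, which is immaterial.
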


\begin{proof}
Fix $\delta \in (0,1/2)$, $t_0, t_1 \in [\delta,1-\delta]$ with $t_0 \leq t_1$ and define
\begin{equation}\label{eq:min max}
m_\delta := \min_{t \in [\delta,1-\delta]}h(t), \qquad M_\delta := \max_{t \in [\delta,1-\delta]} h(t),
\end{equation}
paying attention to the fact that $m_\delta > 0$ by construction.
Write as before $\hat \g_{t_1}:= \S_{h(t_0)}\g_{t_1}$ for the ``horizontal'' translation of $\tilde \g_{t_0}$ (see Figure~\ref{fig:horizontal-vertical}).
By triangular inequality and the contraction estimate \eqref{eq:contraction} we get
\begin{equation}\label{eq:triangular}
\sfd(\tilde \g_{t_0},\tilde \g_{t_1}) 
\leq 
\sfd(\tilde \g_{t_0},\hat \g_{t_1}) 
+
\sfd(\hat \g_{t_1},\tilde \g_{t_1})
\leq 
e^{\lambda^- M_\delta}\sfd(\g_{t_0},\g_{t_1}) 
+
\sfd(\hat \g_{t_1},\tilde \g_{t_1}),
\end{equation}
where $\lambda^- := \max\{-\lambda,0\}$. Since $(\g_t)$ is absolutely continuous the first term in the right-hand side can be controlled as $\sfd( \g_{t_0},\g_{t_1}) \leq \int_{t_0}^{t_1}|\dot \g_t|\d t$.
As regards the second one, by \eqref{eq:speed=slope} and up to assuming $h(t_0) \leq h(t_1)$ (which is not restrictive, as otherwise it is sufficient to swap the boundary values of integration below) it holds
\begin{equation}\label{eq:asimov}
\sfd(\hat \g_{t_1},\tilde \g_{t_1}) = \sfd(\S_{h(t_0)} \g_{t_1},\S_{h(t_1)} \g_{t_1}) \leq \int_{h(t_0)}^{h(t_1)}\Big|\frac{\d}{\d s}\S_s\g_{t_1}\Big|\,\d s = \int_{h(t_0)}^{h(t_1)}|\partial\V|(\S_s\g_{t_1})\,\d s,
\end{equation}
where, to avoid possibly ambiguous notations, $|\frac{\d}{\d s}\S_s\g_{t_1}|$ denotes the metric speed of the ``vertical'' curve $s \mapsto \S_s\g_{t_1}$.
In order to control the slope in the right-most term uniformly both in $s \in [h(t_0),h(t_1)] \subset [m_\delta,M_\delta]$ and in $t_1 \in [\delta,1-\delta]$, for fixed $\delta$, let $\eps$ be such that $-\lambda\eps < \log 2$ (if $\lambda \geq 0$, choose $\eps = m_\delta$) and define $\eps' := \min\{m_\delta,\eps\}$.
Then by \eqref{eq:monotonicity-slope} and the fact that $s \geq h(t_0) \geq m_\delta \geq \eps'$ we have
\[
|\partial\V|(\S_s\g_{t_1}) \leq e^{\lambda(\eps'-s)}|\partial\V|(\S_{\eps'}\g_{t_1}) \leq e^{\lambda^-(M_\delta - \eps')}|\partial\V|(\S_{\eps'}\g_{t_1}), \qquad \forall s \in [m_\delta,M_\delta]
\]
and by \eqref{eq:regularization} for any reference point $x \in D(|\partial\V|)$ it holds
\[
|\partial\V|^2(\S_{\eps'}\g_{t_1}) \leq \frac{1}{2e^{\lambda\eps'}-1}|\partial\V|^2(x) + \frac{1}{I_\lambda(\eps')^2}\sfd^2(x,\g_{t_1}).
\]
The squared distance in the right-hand side above is bounded uniformly in $t_1 \in [\delta,1-\delta]$, since by triangular inequality
\[
\sfd(x,\g_{t_1}) \leq \sfd(x,\g_0) + \sfd(\g_0,\g_{t_1}) \leq \sfd(x,\g_0) + \ell(\g).
\]
Therefore there exists $C_\delta > 0$ such that
\begin{equation}
\label{eq:estimate_slope_uniform}
|\partial\V|(\S_s\g_{t_1}) \leq C_\delta
\qqtext{for all}
t_1\in[\delta,1-\delta]
\mbox{ and }s\in [m_\delta,M_\delta].
\end{equation}
and plugging this bound into \eqref{eq:asimov} yields
\[
\sfd(\hat \g_{t_1},\tilde \g_{t_1}) \leq C_\delta|h(t_1)-h(t_0)| \leq C_\delta \int_{t_0}^{t_1}|h'(t)|\,\d t.
\]
It is now sufficient to combine this inequality with $\sfd(\g_{t_0},\g_{t_1}) \leq \int_{t_0}^{t_1}|\dot \g_t|\d t$ and \eqref{eq:triangular} to get
\begin{equation}
\label{eq:estimate_speed_gammadot}
\sfd(\tilde \g_{t_0},\tilde \g_{t_1}) \leq \int_{t_0}^{t_1} \Big(e^{\lambda^- M_\delta}|\dot \g_t| + C_\delta|h'(t)|\Big)\,\d t.
\end{equation}
As $e^{\lambda^- M_\delta}|\dot \g_t| + C_\delta|h'(t)| \in L^1(\delta,1-\delta)$ and $\delta$ is arbitrary, the fact that $(\tilde \g_t) \in AC_{loc}((0,1))$ is thus proved. 

Turning now to the continuity of $(\tilde \g_t)$ at the endpoints, let $t_0=0$ and $t_1 \in (0,1)$.
Arguing as for \eqref{eq:triangular} but with a crucial difference in the choice of the third point in the triangular inequality, it holds
\[
\begin{split}
\sfd(\tilde \g_0,\tilde \g_{t_1}) 
& \leq
\sfd(\tilde \g_0,\S_{h(t_1)}\g_0) + \sfd(\S_{h(t_1)}\g_0,\tilde \g_{t_1}) \\
& \leq
\sfd(\S_{h(0)} \g_0,\S_{h(t_1)}\g_0) + e^{-\lambda h(t_1)}\sfd(\g_0,\g_{t_1}).
\end{split}
\]
The second term on the right-hand side vanishes as $t_1 \downarrow 0$ by (absolute) continuity of $\g$ and so does the first one, since $s \mapsto \S_s\g_0$ is continuous in $[0,\infty)$ with values in $\X$ and $h(t_1) \to h(0)$. The continuity at $t=1$ is obtained similarly and the proof is complete.
\end{proof}
\begin{rmk}{\rm
If $h(t)>0$ also in $t=0,1$, then the previous argument can be extended to the whole interval $[0,1]$ and therefore $(\tilde \g_t) \in AC([0,1],\X)$.
}\fr 
\end{rmk}

\begin{lem}
\label{lem:Vqtilda_is_AC}
With the same assumptions and notations as in Lemma \ref{lem:qtilda_is_AC}, the entropy is locally absolutely continuous in $(0,1)$ along the regularized curve $\tilde \g_t$, i.e.\
\[
t\mapsto \V(\tilde \g_t) \quad \in \quad AC_{loc}((0,1)).
\]
If in addition $(\g_t) \in AC^2([0,1],\X)$, $\V(\tilde \g_0),\V(\tilde \g_1) < \infty$ and $h$ is differentiable at $t=0$ and $t=1$ with $h'(0) > 0$ and $h'(1) < 0$, then
\[
t\mapsto \V(\tilde \g_t) \quad \in \quad C([0,1]).
\]
\end{lem}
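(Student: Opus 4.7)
The proof splits naturally into two parts. For the local absolute continuity of $t \mapsto \V(\tilde \g_t)$ on $(0,1)$, the plan is to exploit the strong upper gradient property \eqref{eq:strong upper gradient} together with two bounds that were already derived inside the proof of Lemma~\ref{lem:qtilda_is_AC}. Fix $\delta \in (0,1/2)$: the uniform slope bound \eqref{eq:estimate_slope_uniform} yields $|\partial\V|(\tilde \g_t) \leq C_\delta$ for every $t \in [\delta, 1-\delta]$, while \eqref{eq:estimate_speed_gammadot} shows $|\dot{\tilde\g}_t| \in L^1([\delta, 1-\delta])$. Since $h(t)>0$ on this interval, the usual regularizing effect of the $\EVI_\lambda$-flow (via \eqref{eq:regularization}) forces $\tilde \g_t \in D(|\partial\V|) \subset D(\V)$ there. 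Inserting the two bounds into \eqref{eq:strong upper gradient} then gives
\[
|\V(\tilde \g_{t_1}) - \V(\tilde \g_{t_0})| \leq C_\delta \int_{t_0}^{t_1} |\dot{\tilde\g}_t|\,\d t, \qquad \delta \leq t_0 \leq t_1 \leq 1-\delta,
\]
and the arbitrariness of $\delta$ gives local absolute continuity on $(0,1)$.

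For the continuity at the endpoints it suffices to treat $t=0$ (the case $t=1$ being symmetric, using $h'(1)<0$ in place of $h'(0)>0$). On one hand, the lower semicontinuity of $\V$ combined with $\tilde \g_t \to \tilde \g_0$ as $t \downarrow 0$ (Lemma~\ref{lem:qtilda_is_AC}) gives for free $\V(\tilde \g_0) \leq \liminf_{t \downarrow 0}\V(\tilde \g_t)$. For the reverse direction, I would apply Lemma~\ref{lem:technical formula} with $t_0=0$ and $t_1=t$ small. The hypothesis $h'(0)>0$, together with the positivity of $h$ on $(0,1)$, guarantees $h(t) > h(0)$ for all sufficiently small $t>0$, so $t^+=t$ and $t^-=0$ in the notation of that lemma. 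The second term on the left of \eqref{eq:pseudo_Riemannian_estimate_discrete} is nonnegative (since $e^y + e^{-y} - 2 \geq 0$ for any real $y$), as is the first one, so discarding them and isolating the entropy increment yields
\[
\V(\tilde \g_t) - \V(\tilde \g_0) \leq \frac{\lambda t}{1 - e^{-\lambda(h(t)-h(0))}}\cdot \frac{1}{2}\,e^{-\lambda(h(t)+h(0))}\cdot \frac{\sfd^2(\g_t,\g_0)}{t}.
\]
A first order expansion gives $\lambda t / (1 - e^{-\lambda(h(t)-h(0))}) \to 1/h'(0)$ as $t \downarrow 0$; the exponential factor stays bounded; and the $AC^2$ assumption on $\g$ combined with Cauchy--Schwarz provides $\sfd^2(\g_t,\g_0)/t \leq \int_0^t |\dot\g_s|^2\,\d s \to 0$. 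Therefore $\limsup_{t \downarrow 0}\V(\tilde \g_t) \leq \V(\tilde \g_0)$, which closes the argument at $t=0$.

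The main technical subtlety is that the coefficient in front of the entropy increment in \eqref{eq:pseudo_Riemannian_estimate_discrete} degenerates as $h(t) - h(0) \to 0$: inverting it, one gets a factor behaving like $t/(h(t)-h(0))$, which remains finite precisely when $h'(0) \in (0, \infty)$. Thus the assumption $h'(0)>0$ is exactly what is needed to calibrate this degeneracy against the $o(t)$ decay of $\sfd^2(\g_t,\g_0)$ produced by the $AC^2$ hypothesis. The finiteness of $\V(\tilde \g_0)$ is in turn precisely what makes Lemma~\ref{lem:technical formula} applicable at the boundary $t_0=0$ in the first place, and it is automatic when $h(0)>0$ but a genuine constraint when $h(0)=0$.
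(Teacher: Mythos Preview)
Your proof is correct and follows essentially the same approach as the paper: the local absolute continuity is obtained by combining the strong upper gradient property with the slope bound \eqref{eq:estimate_slope_uniform} and the speed bound \eqref{eq:estimate_speed_gammadot}, and the endpoint continuity comes from lower semicontinuity of $\V$ together with the discrete estimate \eqref{eq:pseudo_Riemannian_estimate_discrete} at $t_0=0$. The only cosmetic difference is that the paper keeps the degenerating coefficient on the left-hand side and passes to the limit there, whereas you invert it first; since the coefficient is positive for small $t$ (as $h(t)>h(0)$), both routes are equivalent.
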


Note that $\V(\tilde \g_0),\V(\tilde \g_1) < \infty$ is automatically satisfied if $h(t)>0$ also in $t=0,1$.

\begin{proof}
Let us first prove that $t\mapsto \V(\tilde \g_t)$ is locally absolutely continuous.
Since $|\partial \V|$ is a strong upper-gradient, the chain rule \eqref{eq:strong upper gradient} holds and it suffices to show that $|\partial\V|(\tilde \g_t)|\dot{\tilde{\g}}_t| \in L^1_{loc}(0,1)$, namely
\begin{equation}\label{eq:finiteness}
\int_\delta^{1-\delta} |\partial\V|(\tilde \g_t)|\dot{\tilde{\g}}_t|\,\d t < \infty, \qquad \forall \delta \in (0,1/2),
\end{equation}
as this would imply that $\V \circ \tilde{\g} \in AC_{loc}((0,1))$ with
\[
\left|\ddt (\V \circ \tilde \g)(t)\right| \leq |\partial \V|(\tilde{\g}_t) \cdot |\dot{\tilde{\g}}_t|, \qquad \textrm{for a.e. } t \in (0,1).
\]
To this aim, observe from \eqref{eq:estimate_speed_gammadot} that $|\dot{\tilde{\g}}_t| \in L^1_{loc}(0,1)$ with $|\dot{\tilde{\g}}_t| \leq e^{\lambda^- M_\delta}|\dot \g_t| + C_\delta|h'(t)|$ a.e.\ on $[\delta,1-\delta]$, with $M_\delta$ defined in \eqref{eq:min max}.
Moreover from \eqref{eq:estimate_slope_uniform} we also know that $|\partial\V|(\S_s\g_t) \leq C_\delta$ uniformly in $t \in [\delta,1-\delta]$ and $s \in [m_\delta,M_\delta]$, so that by choosing $s=h(t)$ we get in particular $|\partial\V|(\tilde \g_t) \leq C_\delta$ for all $t \in [\delta,1-\delta]$.
This shows that $t \mapsto |\partial\V|(\tilde \g_t)$ belongs to $L^\infty_{loc}(0,1)$, whence \eqref{eq:finiteness}.

Now assume that $(\g_t) \in AC^2([0,1],\X)$, $\V(\tilde \g_0) < \infty$, $h$ is differentiable at $t=0$ with $h'(0) > 0$ and let us prove that $t\mapsto \V(\tilde \g_t)$ is continuous at $t=0$.
(The argument is identical for $t=1$.)
On the one hand, as $(\tilde \g_t)$ is continuous at $t=0$ by Lemma \ref{lem:qtilda_is_AC} and $\V$ is lower semicontinuous, we see that $\V(\tilde \g_0) \leq \liminf_{t \downarrow 0}\V(\tilde \g_t)$.
On the other hand, choosing $t_0 = 0$ in \eqref{eq:pseudo_Riemannian_estimate_discrete}, our assumption that $h'(0)>0$ gives $h(t_1)>h(0)$ for $t_1>0$ small, hence $t^-=0$ and $t^+=t_1$.
Discarding the first two (non-negative) terms on the left-hand side, and multiplying by $(t_1-t_0)=t_1$ yield
\[
\begin{split}
\frac{1-e^{-\lambda (h(t_1)-h(0))}}{\lambda (t_1-0)} \cdot \big(\V(\tilde \g_{t_1})-\V(\tilde \g_{0})\big) & \leq \frac{t_1}{2} e^{-\lambda (h(t_1)+h(0))}\left|\frac{\sfd(\g_{t_1},\g_0)}{t_1}\right|^2 \\
& \leq \frac{t_1}{2} e^{-\lambda (h(t_1)+h(0))}\Big(\frac{1}{t_1}\int_0^{t_1} |\dot \g_t|\,\d t\Big)^2 \\
& \leq \frac{1}{2} e^{-\lambda (h(t_1)+h(0))} \int_0^{t_1} |\dot \g_t|^2\,\d t.
\end{split}.
\]
Letting $t_1 \downarrow 0$, the right-hand side vanishes owing to our assumption that $(\g_t) \in AC^2([0,1],\X)$, and clearly the exponential difference quotient in the left-hand side converges to $h'(0)$.
Rearranging gives
\[
h'(0)\limsup_{t_1 \downarrow 0} \V(\tilde \g_{t_1}) \leq h'(0) \V(\tilde \g_0),
\]
since $h'(0) > 0$ the desired upper semicontinuity follows and the proof is complete.
\end{proof}

Gathering the results proven so far, we deduce the following:

\begin{prop}
\label{prop:pseudo_Riemannian_estimate}
With the same assumptions and notations as in Lemma \ref{lem:qtilda_is_AC}, for a.e.\ $t \in (0,1)$ it holds
\begin{equation}
\label{eq:pseudo_Riemannian_estimate}
\frac{1}{2}\left|\dot{\tilde \g}_t\right|^2
+
\frac{1}{2}|h'(t)|^2 |\partial\V|^2(\tilde \g_{t})
+
h'(t)\ddt \V(\tilde \g_{t})
\leq 
\frac{1}{2}e^{-2\lambda h(t)}\left|\dot \g_t\right|^2.
\end{equation}
\end{prop}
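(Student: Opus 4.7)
The strategy is to derive the pointwise differential inequality \eqref{eq:pseudo_Riemannian_estimate} as a consequence of the discrete bound \eqref{eq:pseudo_Riemannian_estimate_discrete} from Lemma~\ref{lem:technical formula}, by passing to the limit $t_1 \downarrow t_0$ for $t_0 = t$ belonging to a full-measure subset of $(0,1)$. More precisely, I would first select $t \in (0,1)$ such that simultaneously (i) $h$ is differentiable at $t$, (ii) the metric derivatives $|\dot\g_t|$ and $|\dot{\tilde\g}_t|$ exist (available a.e.\ thanks to Lemma~\ref{lem:qtilda_is_AC}, which ensures $\tilde\g \in AC_{loc}((0,1),\X)$), and (iii) $\V\circ\tilde\g$ is differentiable at $t$ (available a.e.\ by Lemma~\ref{lem:Vqtilda_is_AC}). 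For each such $t$, apply \eqref{eq:pseudo_Riemannian_estimate_discrete} with $t_0 = t$ and $t_1 = t + \eta$ with $\eta \downarrow 0$.

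The right-hand side of \eqref{eq:pseudo_Riemannian_estimate_discrete} converges to $\tfrac{1}{2}e^{-2\lambda h(t)}|\dot\g_t|^2$ directly from the definition of metric derivative and continuity of $h$. On the left-hand side, the first term converges to $\tfrac{1}{2}|\dot{\tilde\g}_t|^2$, again by the metric derivative definition. For the third summand, a Taylor expansion of $\tau \mapsto (1 - e^{-\lambda\tau})/(\lambda\tau)$ at $\tau = 0$ (valid irrespective of the sign of $\lambda$, including $\lambda = 0$) combined with differentiability of $h$ shows that the coefficient $\tfrac{1 - e^{-\lambda(h(t^+)-h(t^-))}}{\lambda(t^+ - t^-)}$ tends to $h'(t)$ in both cases $h(t_1) \gtreqless h(t_0)$; together with the differentiability of $t \mapsto \V(\tilde\g_t)$, this produces the factor $h'(t)\ddt\V(\tilde\g_t)$ in the limit.

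The main subtlety lies in the second summand, because it involves $|\partial\V|^2(\tilde\g_{t^+})$ rather than $|\partial\V|^2(\tilde\g_t)$ — and we only have sequential lower semicontinuity of $|\partial\V|$ (from the global representation \eqref{eq:local=global} as a supremum of continuous functions). Fortunately, this goes in the right direction: as $\eta \downarrow 0$ one has $t^+ \to t$, and the continuity of $\tilde\g$ on $(0,1)$ (Lemma~\ref{lem:qtilda_is_AC}) yields $\tilde\g_{t^+} \to \tilde\g_t$ in $(\X,\sfd)$, whence
\[
\liminf_{\eta \downarrow 0} |\partial\V|^2(\tilde\g_{t^+}) \geq |\partial\V|^2(\tilde\g_t).
\]
A standard Taylor expansion gives $\tfrac{1}{2\lambda^2}\big(e^{\lambda\Delta h} + e^{-\lambda\Delta h} - 2\big)/\eta^2 \to \tfrac{1}{2}|h'(t)|^2$, and this limit is non-negative, so multiplying by the liminf of $|\partial\V|^2$ preserves the inequality. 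Note that $|\partial\V|(\tilde\g_t) < \infty$ for $t \in (0,1)$ by the regularizing estimate \eqref{eq:regularization} applied to $\tilde\g_t = \S_{h(t)}\g_t$ with $h(t) > 0$, so the product $|h'(t)|^2|\partial\V|^2(\tilde\g_t)$ is well defined.

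Combining all three limits via the subadditivity of $\liminf$, we obtain
\[
\frac{1}{2}|\dot{\tilde\g}_t|^2 + \frac{1}{2}|h'(t)|^2|\partial\V|^2(\tilde\g_t) + h'(t)\ddt\V(\tilde\g_t) \;\leq\; \liminf_{\eta\downarrow 0}\mathrm{LHS} \;\leq\; \lim_{\eta\downarrow 0}\mathrm{RHS} \;=\; \frac{1}{2}e^{-2\lambda h(t)}|\dot\g_t|^2,
\]
which is exactly \eqref{eq:pseudo_Riemannian_estimate}. The only real obstacle I anticipate is a careful bookkeeping of the sign cases defining $t^\pm$ when $h$ is not monotone near $t$, but since both cases yield the same limiting expressions, this is a purely notational issue.
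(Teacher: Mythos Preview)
Your proposal is correct and follows essentially the same approach as the paper's own proof: both select a full-measure set of times where $h$, $\g$, $\tilde\g$, and $\V\circ\tilde\g$ are differentiable, pass to the limit $t_1\downarrow t_0=t$ in the discrete estimate \eqref{eq:pseudo_Riemannian_estimate_discrete}, and handle the Fisher-information term via the $\sfd$-continuity of $\tilde\g$ together with the lower semicontinuity of the slope from \eqref{eq:local=global}. The paper's argument is nearly identical in structure and detail, including the observation that the $t^\pm$ bookkeeping is harmless since both cases yield the same limit.
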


\begin{proof}
The argument simply consists in taking the limit $t_1\to t_0$ in \eqref{eq:pseudo_Riemannian_estimate_discrete}, which should clearly lead (at least formally) to \eqref{eq:pseudo_Riemannian_estimate} by Taylor-expanding the various exponential difference quotients.
In order to make this rigorous, note that the first and third terms in the left-hand side of \eqref{eq:pseudo_Riemannian_estimate} are well defined for a.e.\ $t\in (0,1)$ by Lemma \ref{lem:qtilda_is_AC} and Lemma \ref{lem:Vqtilda_is_AC}, respectively.
The second term is also unambiguously defined because $h(t)>0$, hence the ``vertical'' $\EVI_\lambda$-gradient flow starting from $\g_t$ and defining $\tilde \g_t=\S_{h(t)}\g_t$ falls immediately within the domain $\X_1=D(|\partial \V|)$.
The right-hand side is well defined for a.e.\ $t$ since $\g\in AC([0,1],\X)$.

After this premise, let $t\in (0,1)$ be any differentiation point for $h$, $t \mapsto \g_t$, $t \mapsto \tilde \g_t$ and $t \mapsto \V(\tilde \g_t)$, choose $t_0 = t$ in \eqref{eq:pseudo_Riemannian_estimate_discrete} and let us take the right limit $t_1\downarrow t_0$ (since we are considering a differentiability point the left and right limits exist and are equal, so there is no need to address the left limit).
From the very definition \eqref{eq:def_tdelta} of $t^\pm$ it clearly holds $t^\pm \to t_0$ as $t_1 \downarrow t$, hence the convergence of the right-hand side of \eqref{eq:pseudo_Riemannian_estimate_discrete} to the right-hand side of \eqref{eq:pseudo_Riemannian_estimate} is clear and so is the convergence of the two difference quotients of $h$.
By Lemma~\ref{lem:qtilda_is_AC} the first term in the left-hand side also passes to the limit, as does the third one according to Lemma~\ref{lem:Vqtilda_is_AC}.
The only term left to handle is the Fisher information $|\partial \V|^2(\tilde \g^+)$. From the continuity of $t \mapsto \tilde \g_t$ (cf.\ Lemma~\ref{lem:qtilda_is_AC}) we see that $\tilde \g_{t^+} \to \tilde \g_t$ in $(\X,\sfd)$, and the lower semicontinuity of the slope \eqref{eq:local=global} results in
\[
|\partial \V|(\tilde \g_{t})\leq \liminf_{t_1 \downarrow 0} |\partial \V|(\tilde \g_{t^+}).
\]
Thus rigorously taking the liminf $t_1\downarrow t_0$ in \eqref{eq:pseudo_Riemannian_estimate_discrete} entails \eqref{eq:pseudo_Riemannian_estimate} and achieves the proof.
\end{proof}
The interesting consequence for our purpose is then:
\begin{theo}
\label{theo:upper_bound_recovery_metric}
With the same assumptions and notations as in Setting \ref{setting}, fix $\eps>0$, and set $h_\eps(t) := \eps \min\{t,1-t\}$. Let $(\g_t)\in AC^2([0,1],\X)$ be such that $\V(\g_0),\V(\g_1)<\infty$ and define
\[
\g^\eps_t := \S_{h_\eps(t)}\g_t, \qquad t \in [0,1].
\]
Then $(\g^\eps_t) \in AC^2([0,1],\X)$, $t \mapsto \V(\g^\eps_t)$ belongs to $AC([0,1])$ and it holds
\begin{equation}
\label{eq:upper_bound_recovery_metric}
\begin{split}
\frac{1}{2}\int_0^1|\dot \g^\eps_t|^2\,\dt + \frac{\eps^2}{2}\int_0^1 |\partial \V|^2(\g^ \eps_t)\,\dt 
& \leq \frac{1}{2}e^{\lambda^-\eps}\int_0^1|\dot \g_t|^2\,\dt -2\eps\V(\g^\eps_{1/2}) \\
& \qquad + \eps \Big(\V(\g_0) + \V(\g_1)\Big).
\end{split}
\end{equation}
\end{theo}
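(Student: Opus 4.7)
The plan is to apply the pointwise differential estimate of Proposition~\ref{prop:pseudo_Riemannian_estimate} with the specific tent function $h_\eps(t)=\eps\min\{t,1-t\}$, and then integrate over $[0,1]$ splitting at the corner $t=1/2$.

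First I would verify that the hypotheses of the preceding lemmas are satisfied for this choice of $h_\eps$. The function $h_\eps$ is Lipschitz (hence absolutely continuous) on $[0,1]$, strictly positive on $(0,1)$, and differentiable at the endpoints with $h_\eps'(0)=\eps>0$, $h_\eps'(1)=-\eps<0$. Moreover $\gamma^\eps_0=\S_0\gamma_0=\gamma_0$ and $\gamma^\eps_1=\gamma_1$, so $\V(\gamma^\eps_0),\V(\gamma^\eps_1)<\infty$ by assumption. Therefore Lemma~\ref{lem:qtilda_is_AC} gives $\gamma^\eps\in AC_{loc}((0,1),\X)\cap C([0,1],\X)$, and Lemma~\ref{lem:Vqtilda_is_AC} gives $t\mapsto\V(\gamma^\eps_t)\in AC_{loc}((0,1))\cap C([0,1])$.

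Next I would apply the pointwise estimate~\eqref{eq:pseudo_Riemannian_estimate}. Since $|h_\eps'(t)|^2=\eps^2$ for a.e.\ $t\in(0,1)$ and $h_\eps(t)\in[0,\eps/2]$, the bound $e^{-2\lambda h_\eps(t)}\leq e^{\lambda^-\eps}$ holds uniformly (checking the two cases $\lambda\geq 0$ and $\lambda<0$ separately). Integrating~\eqref{eq:pseudo_Riemannian_estimate} over $[0,1]$ thus yields
\[
\tfrac{1}{2}\int_0^1|\dot\gamma^\eps_t|^2\,\dt+\tfrac{\eps^2}{2}\int_0^1|\partial\V|^2(\gamma^\eps_t)\,\dt+\int_0^1 h_\eps'(t)\ddt\V(\gamma^\eps_t)\,\dt\leq\tfrac{1}{2}e^{\lambda^-\eps}\int_0^1|\dot\gamma_t|^2\,\dt.
\]
The key step — and mild obstacle — is handling the third term, since $h_\eps'$ is discontinuous at $t=1/2$. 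I would split the integral at $t=1/2$, on each half $h_\eps'$ is constant ($=\eps$ on $[0,1/2]$, $=-\eps$ on $[1/2,1]$), and pull the constant out. Using the absolute continuity of $t\mapsto\V(\gamma^\eps_t)$ on $[0,1/2]$ and $[1/2,1]$ (granted by Lemma~\ref{lem:Vqtilda_is_AC}, whose endpoint continuity hypotheses I verified above — this is why controlling the continuity at the kink $t=1/2$ and at the endpoints $t=0,1$ was done so carefully earlier), together with $\gamma^\eps_0=\gamma_0$ and $\gamma^\eps_1=\gamma_1$, the fundamental theorem of calculus gives
\[
\int_0^1 h_\eps'(t)\ddt\V(\gamma^\eps_t)\,\dt=\eps\bigl[\V(\gamma^\eps_{1/2})-\V(\gamma_0)\bigr]-\eps\bigl[\V(\gamma_1)-\V(\gamma^\eps_{1/2})\bigr]=2\eps\V(\gamma^\eps_{1/2})-\eps\bigl(\V(\gamma_0)+\V(\gamma_1)\bigr).
\]
Substituting this identity and rearranging produces exactly~\eqref{eq:upper_bound_recovery_metric}.

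Finally, the statement $\gamma^\eps\in AC^2([0,1],\X)$ is obtained as a consequence of the derived inequality: the resulting $L^2$-bound on the metric speed $|\dot\gamma^\eps_t|$ is finite (the right-hand side of \eqref{eq:upper_bound_recovery_metric} is finite since $\V(\gamma^\eps_{1/2})$ is bounded below by \ref{item:V_locally_bounded_below} applied to the bounded set $\{\gamma^\eps_t\}_{t\in[0,1]}$), and combined with the $AC_{loc}((0,1),\X)\cap C([0,1],\X)$ regularity from Lemma~\ref{lem:qtilda_is_AC} this upgrades to $AC^2([0,1],\X)$ by a standard dominated convergence argument at the endpoints. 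Likewise, that $\V\circ\gamma^\eps\in AC([0,1])$ already follows from Lemma~\ref{lem:Vqtilda_is_AC}.
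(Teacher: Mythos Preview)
Your overall strategy matches the paper's, but there is a genuine gap in the justification of the fundamental theorem of calculus step. You claim that ``the absolute continuity of $t\mapsto\V(\gamma^\eps_t)$ on $[0,1/2]$ and $[1/2,1]$'' is ``granted by Lemma~\ref{lem:Vqtilda_is_AC}'', and in your last sentence you assert that $\V\circ\gamma^\eps\in AC([0,1])$ ``already follows from Lemma~\ref{lem:Vqtilda_is_AC}''. But Lemma~\ref{lem:Vqtilda_is_AC} only yields $\V\circ\gamma^\eps\in AC_{loc}((0,1))\cap C([0,1])$; continuity at the endpoints together with local absolute continuity in the interior does \emph{not} imply absolute continuity up to the boundary (think of $t\mapsto t\sin(1/t)$ near $t=0$). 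Without $AC([0,1/2])$ your identity $\int_0^{1/2}\ddt\V(\gamma^\eps_t)\,\dt=\V(\gamma^\eps_{1/2})-\V(\gamma_0)$ is unjustified, and at this stage you do not even know that $t\mapsto\ddt\V(\gamma^\eps_t)$ is integrable near $t=0,1$.

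The paper handles this by integrating \eqref{eq:pseudo_Riemannian_estimate} first on $[\delta,1/2]$ and $[1/2,1-\delta]$ (where $\V\circ\gamma^\eps$ is genuinely $AC$ by Lemma~\ref{lem:Vqtilda_is_AC}, so the integration by parts is legitimate), obtaining an estimate with boundary terms $\V(\gamma^\eps_\delta)$ and $\V(\gamma^\eps_{1-\delta})$. One then lets $\delta\downarrow 0$: the two integrals on the left pass by monotone convergence, the kinetic term on the right by $\gamma\in AC^2$, and the boundary terms converge to $\V(\gamma_0),\V(\gamma_1)$ by the $C([0,1])$ continuity from Lemma~\ref{lem:Vqtilda_is_AC}. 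Only \emph{after} \eqref{eq:upper_bound_recovery_metric} is established does one read off $|\dot\gamma^\eps|,|\partial\V|(\gamma^\eps)\in L^2(0,1)$ from the finiteness of its right-hand side, and then conclude (via the strong upper gradient property~\eqref{eq:strong upper gradient}) that $\V\circ\gamma^\eps\in AC([0,1])$. The logical order matters: the global $AC$ regularity of $\V\circ\gamma^\eps$ is an output of the estimate, not an input you may feed into its proof.
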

\noindent
Note here that $h_\eps(0)=h_\eps(1)=0$, so that the endpoints $\g^\eps_0=\g_0$ and $\g^\eps_1=\g_1$ remain unchanged.
\begin{proof}
The strategy of proof simply consists in integrating \eqref{eq:pseudo_Riemannian_estimate} between $0$ and $1$ while integrating by parts of the term $h_\eps'(t)\ddt \V(\g^ \eps_t)$, separately on $[0,1/2]$ and $[1/2,1]$.
Note carefully that our specific choice gives $h_\eps' = \eps$ and $h_\eps' = -\eps$ on these two time intervals, respectively.
Taking into account $e^{-2\lambda h_\eps(t)} \leq e^{\lambda^-\eps}$, where $\lambda^- := \max\{-\lambda,0\}$, this procedure yields
\[
\begin{split}
\frac{1}{2}\int_0^1|\dot \g^ \eps_t|^2\,\dt + \frac{\eps^2}{2}\int_0^1 |\partial\V|^2(\g^ \eps_t)\,\dt & \leq \frac{1}{2}e^{\lambda^-\eps}\int_0^1|\dot \g_t|^2 \,\dt -2\eps\V(\g^\eps_{1/2}) \\
& \quad + \eps \Big(\V(\g^\eps_0)+\V(\g^\eps_1)\Big).
\end{split}
\]
The term $2\eps \V(\g^\eps_{1/2})$ simply arises from the two boundary terms at $t=1/2$ in the two integrations by parts.
(Alternatively, it can be seen as the result of $-\int_0^1 \V(\g^\eps_t)h''(t)$ arising from the integration by parts in the whole interval $[0,1]$, with the singularity $h''(t)=-2\eps\delta_{1/2}(t)$).
However, this argument is not fully rigorous because all the terms on the left-hand side of \eqref{eq:pseudo_Riemannian_estimate} are only \emph{locally} integrable, hence we may not be allowed to integrate them all the way to $t=0$ and $t=1$.

In order to circumvent this slight issue, choose $\delta \in (0,1/2)$ and carry out the same argument on $[\delta,1/2]$ and $[1/2,1-\delta]$ rather than on $[0,1/2]$ and $[1/2,1]$: Integration by parts is now justified by Lemma \ref{lem:Vqtilda_is_AC} and this provides us with
\begin{equation}
\label{eq:upper_bound_recovery_metric_delta}
\begin{split}
\frac{1}{2}\int_{\delta}^{1-\delta}|\dot \g^\eps_t|^2\,\dt + \frac{\eps^2}{2}\int_{\delta}^{1-\delta} |\partial \V|^2(\g^ \eps_t)\,\dt & \leq \frac{1}{2}e^{\lambda^-\eps}\int_{\delta}^{1-\delta}|\dot \g_t|^2\,\dt \\
& \quad + \eps \Big(\V(\g^\eps_{\delta})-2\V(\g^\eps_{1/2})+\V(\g^\eps_{1-\delta})\Big).
\end{split}
\end{equation}
It is then sufficient to pass to the limit as $\delta \downarrow 0$.
By monotonicity the left-hand side above converges to the left-hand side in \eqref{eq:upper_bound_recovery_metric} and for the same reason so does the first term on the right-hand side, while by the current choice of $h$ and by Lemma \ref{lem:Vqtilda_is_AC} $t \mapsto \V(\g_t^\eps)$ is continuous on the whole interval $[0,1]$, so that
\[
\lim_{\delta \downarrow 0} \eps \Big(\V(\g^\eps_{\delta})+\V(\g^\eps_{1-\delta})\Big) 
= \eps \Big(\V(\g^\eps_0)+\V(\g^\eps_1)\Big)
=\eps \Big(\V(\g_0)+\V(\g_1)\Big)
\]
and \eqref{eq:upper_bound_recovery_metric} follows.

Finally, since the right-hand side of \eqref{eq:upper_bound_recovery_metric} is finite we see that $|\dot\g^\eps| \in L^2(0,1)$ and $|\partial \V|(\g^ \eps) \in L^2(0,1)$. As a consequence $|\dot\g^\eps|\cdot|\partial\V(\g^\eps)|\in L^1(0,1)$ in the strong upper-chain rule \eqref{eq:strong upper gradient}, and $\V \circ \g^\eps \in AC([0,1])$ as desired.
\end{proof}
\section{Small-temperature limit and convexity} 
\label{sec:small_noise_convexity}
\subsection{\texorpdfstring{$\Gamma$}{Gamma}-convergence of the Schr\"odinger problem}

Relying on the results of the previous section, we can now turn to Theorem \ref{theo:Gamma_lim_formal} and make it rigorous in the metric setting. To this end, let us first introduce two action functionals: the kinetic energy $\mathcal{A}$ and the (halved) Fisher information $\mathcal{I}$ along a curve, respectively defined as
\[
\mathcal{A}(\g) := \frac{1}{2}\int_0^1|\dot \g_t|^2\,\dt \qquad\textrm{and}\qquad \mathcal{I}(\g) := \frac{1}{2}\int_0^1 |\partial \V|^2(\g_t)\,\dt
\]
for all $(\g_t) \in C([0,1],\X)$, where it is understood that $\mathcal{A}(\g) = +\infty$ whenever $\g$ is not absolutely continuous. Given two points $x,y \in \X$ and a temperature/slowing-down parameter $\eps>0$, the (metric) Schr\"odinger problem reads as
\begin{equation}
\label{eq:schrodinger_pb}
\inf_{(\g_t) \,:\, x \leadsto y}\Big\{\mathcal{A}(\g) + \eps^2\mathcal{I}(\g)\Big\},
\tag{$\mathrm{Sch}^\eps$}
\end{equation}
where $(\g_t) \,:\, x \leadsto y$ is a short-hand notation meaning that the infimum runs over all $(\g_t) \in C([0,1],\X)$ such that $\g_0 = x$ and $\g_1 = y$. For sake of brevity we also introduce
\[
\mathcal{A}_\eps(\g) := \mathcal{A}(\g) + \eps^2\mathcal{I}(\g).
\]
From \eqref{eq:schrodinger_pb} it is thus clear that the Fisher information $\mathcal{I}$ acts as a perturbation of $\mathcal{A}$ and this has a regularizing effect, since minimizers of \eqref{eq:schrodinger_pb} live within the regular domain $\X_1 = D(|\partial\V|)$.

\begin{rmk}{\rm
The smoothing effect is well understood for the classic Schr\"odinger problem in a regular setting, namely when $\V$ is the Boltzmann-Shannon relative entropy and $\X$ is the Wasserstein space over a smooth Riemannian manifold. In this case, under mild assumptions on the end-points, minimizers of \eqref{eq:schrodinger_pb} are curves of absolutely continuous measures whose densities are bounded, smooth, Lipschitz, with exponentially fast decaying tails.

In the current metric framework the properties above are meaningless, but still minimizers of \eqref{eq:schrodinger_pb} are ``regular'' from a metric point of view, since as just said they live within $D(|\partial\V|)$. Moreover, in Proposition \ref{prop:schro_solvable_iff_finite_entropy} we are going to see that $\V$ is absolutely continuous along optimal curves.
}\fr 
\end{rmk}

Let us first deal with the solvability of \eqref{eq:schrodinger_pb}.

\begin{prop}
\label{prop:schro_solvable_iff_finite_entropy}
With the same assumptions and notations as in Setting \ref{setting} and under Assumption \ref{hyp1}, for any fixed $x,y \in \X$ and $\eps>0$ the Schr\"odinger problem \eqref{eq:schrodinger_pb} is solvable if and only if $\V(x),\V(y)<\infty$ and there exists $(\g_t) \in AC([0,1],\X)$ such that $\g_0=x$ and $\g_1=y$.
\end{prop}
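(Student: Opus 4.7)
The plan is to treat the two implications separately, both short modulo the $\EVI_\lambda$ toolbox already developed.

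For the necessity ($\Rightarrow$), suppose $\g^*$ is a minimizer of \eqref{eq:schrodinger_pb}, so $\mathcal{A}_\eps(\g^*) < \infty$. Finite kinetic action forces $\g^* \in AC^2([0,1],\X)$, while $\mathcal{I}(\g^*) < \infty$ gives $|\partial\V|(\g^*_t) < \infty$ for a.e.\ $t$; since $D(|\partial\V|) \subset D(\V)$, the same holds for $\V$, so there exists $t_0 \in (0,1)$ with $\V(\g^*_{t_0}) < \infty$. Cauchy--Schwarz yields $\int_0^1 |\partial\V|(\g^*_t)|\dot{\g}^*_t|\,\dt \leq 2\sqrt{\mathcal{A}(\g^*)\mathcal{I}(\g^*)} < \infty$, and the strong upper-gradient inequality \eqref{eq:strong upper gradient} then propagates the finiteness of $\V\circ\g^*$ from $t_0$ to all of $[0,1]$, in particular to $x = \g^*_0$ and $y = \g^*_1$.

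For the sufficiency ($\Leftarrow$), I would first verify that the infimum is finite. Any AC curve from $x$ to $y$ admits a constant-speed reparametrization belonging to $AC^2$ with the same endpoints, and applying Theorem \ref{theo:upper_bound_recovery_metric} to this reparametrization produces a competitor $\g^\eps$ with $\mathcal{A}_\eps(\g^\eps) < \infty$; the assumption $\V(x),\V(y) < \infty$ is used precisely to bound the boundary terms in \eqref{eq:upper_bound_recovery_metric}. I would then run the direct method: pick a minimizing sequence $(\g^n) \subset C([0,1],\X)$ with $\sup_n \mathcal{A}_\eps(\g^n) \leq C < \infty$; the bound $\sfd(\g^n_s,\g^n_t)^2 \leq 2\mathcal{A}(\g^n)|t-s|$ makes the family equi-$1/2$-H\"older and forces every $\g^n_t$ to lie in a fixed $\sfd$-bounded set.

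The main obstacle is the extraction of a limit curve, since the compactness in Assumption \ref{hyp1} only holds in the weaker topology $\sigma$. Combining pointwise sequential $\sigma$-compactness of the fixed bounded set with a diagonal argument on a countable dense set of times, and using the $\sigma$-lower semicontinuity of $\sfd$ to upgrade the discrete equi-H\"older bound to the limit, a routine Ascoli--Arzel\`a argument yields a subsequence converging pointwise in $\sigma$ to some $\g \in C([0,1],(\X,\sigma))$ with $\g_0 = x$, $\g_1 = y$. Lower semicontinuity finishes the proof: $\mathcal{A}$ is $\sigma$-lsc via the $\sigma$-lsc of $\sfd$ (which implies $\sigma$-lsc of the metric speed in standard fashion) and $\mathcal{I}$ is $\sigma$-lsc via the $\sigma$-lsc of $|\partial\V|$ and Fatou's lemma, so $\mathcal{A}_\eps(\g) \leq \liminf_n \mathcal{A}_\eps(\g^n) = \inf \eqref{eq:schrodinger_pb}$ and $\g$ is a minimizer.
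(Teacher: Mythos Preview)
Your proposal is correct and follows essentially the same approach as the paper: for necessity you use the strong upper-gradient inequality together with $L^2$-integrability of $|\dot\g^*|$ and $|\partial\V|(\g^*)$ to propagate finiteness of $\V$ to the endpoints, and for sufficiency you build a finite-action competitor via Theorem~\ref{theo:upper_bound_recovery_metric} and run the direct method using the refined Arzel\`a--Ascoli lemma (the paper cites \cite[Proposition~3.3.1]{AmbrosioGigliSavare08}) with $\sigma$-compactness and $\sigma$-lower semicontinuity of $\sfd$ and $|\partial\V|$. The only minor point is that in the $\Gamma$-$\limsup$ competitor bound you should also note that the term $-2\eps\V(\g^\eps_{1/2})$ in \eqref{eq:upper_bound_recovery_metric} is controlled by \ref{item:V_locally_bounded_below}, not just by the finiteness of $\V(x),\V(y)$.
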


As the condition characterizing the solvability of the Schr\"odinger problem does not depend on $\eps$, it is clear that if \eqref{eq:schrodinger_pb} is solvable for some $\eps>0$, then it is actually solvable for all $\eps>0$.

\begin{proof}
Assume that the endpoints have finite entropy and that there exists an absolutely continuous curve $\g$ connecting $x$ to $y$. Up to reparametrization, we can assume that $(\g_t) \in AC^2([0,1],\X)$.
Theorem~\ref{theo:upper_bound_recovery_metric} thus guarantees that $\mathcal{A}_\eps$ is finite along the regularization $(\g^ \eps_t)_{t\in[0,1]}$ of this curve and therefore the variational problem \eqref{eq:schrodinger_pb} is proper.
Let then $(\g^n_t)$ be any minimizing sequence and observe that the kinetic action $\mathcal{A}$ is bounded uniformly in $n$, say $\mathcal{A}(\g^n) \leq C$ for all $n$. We now observe that for any pair $0 \leq t_0 < t_1 \leq 1$ it holds
\begin{equation}
\label{eccurve1}
\sfd(\g^n_{t_0},\g^n_{t_1}) \leq \int_{t_0}^{t_1}|\dot \g^n_t| \,\dt \leq |t_0-t_1|^{1/2}\Big(\int_{t_0}^{t_1}|\dot \g^ n_t|^2 \,\dt\Big)^{1/2} \leq C|t_0-t_1|^{1/2}.
\end{equation} 
Since the endpoints are fixed, this implies that the set of points $\g^n_t$ is bounded in $(\X,\sfd)$ uniformly in $n,t$, thus it is $\sigma$-relatively sequentially compact by Assumption \ref{hyp1}. By the refined Arzel\`a-Ascoli lemma \cite[Proposition 3.3.1]{AmbrosioGigliSavare08}, there exists a limiting $\sfd$-continuous (actually $1/2$-H\"older continuous) curve $\g$ such that 
\[
\g^n_t \stackrel{\sigma}{\to} \g_t, \qquad \forall t \in [0,1].
\]
We now observe that the kinetic action is lower semicontinuous for this pointwise-in-time convergence w.r.t.\ $\sigma$, cf.\ \cite[Section 2.2]{AmbrosioGigliSavare11} (indeed, $\sfd$ is lower semicontinuous w.r.t.\ $\sigma$, hence the $2$-energies of the finite partitions of $\g$ are lower semicontinuous w.r.t.\ $\sigma$ too, whence the lower semicontinuity of the $2$-energy of $\g$ itself).
Moreover $|\partial \V|^2$ is also lower semicontinuous w.r.t.\ $\sigma$ by hypothesis, and this fact together with Fatou's lemma gives
\[
\int_0^1 |\partial \V|^2(\g_t)\,\dt \leq \int_0^1 \liminf_{n \to \infty} |\partial \V|^2(\g^ n_t)\,\dt \leq \liminf_{n \to \infty} \int_0^1 |\partial \V|^2(\g^ n_t)\,\dt.
\]
Therefore $\g$ is a minimizer of \eqref{eq:schrodinger_pb}.

Conversely, assume that there exists a minimizer, denoted by $\g$ (the following argument actually works for any curve along which $\mathcal{A}_\eps$ is finite and without Assumption \ref{hyp1}).
Then in particular $t \mapsto |\dot \g_t|$ and $t \mapsto |\partial \V|(\g_t)$ belong to $L^2(0,1)$ and by \eqref{eq:strong upper gradient} we see that $t \mapsto \V(\g_t)$ is globally absolutely continuous with
\[
\left|\ddt(\V \circ \g)(t)\right|\leq |\partial \V|(\g_t) \cdot |\dot \g_t| \in L^1(0,1).
\]
The fact that $(|\dot \g_t|) \in L^2(0,1) \subset L^1(0,1)$ trivially implies $(\g_t) \in AC([0,1],\X)$, whereas the fact that $t \mapsto |\partial \V|(\g_t)$ belongs to $L^2(0,1)$ also implies that $|\partial \V|(\g_t)$ is finite for a.e.\ $t \in [0,1]$ and a fortiori so is $\V(\g_t)$, since $D(|\partial \V|)\subset D(\V)$.
Hence let $t^* \in (0,1)$ be any point satifying $\V(\g_{t^*})<\infty$ and note that together with \eqref{eq:strong upper gradient} this gives the following global upper bound valid for all $t < t^*$
\[
\V(\g_t) \leq \V(\g_{t^*}) + \int_t^{t^*} \left|\ddt(\V \circ \g)(t)\right|\dt \leq \V(\g_{t^*}) + \int_0^1 \left|\ddt(\V \circ \g)(t)\right|\dt =: \overline{\V} < \infty.
\]
As a consequence, and taking also into account the facts that $t \mapsto \g_t$ is $\sfd$-continuous and $\V$ is lower semicontinuous, we get
\[
\V(\g_0) = \V\big(\lim_{t \to 0} \g_t\big) \leq \liminf_{t \to 0} \V(\g_t) \leq \overline{\V}
\]
and the proof is thus complete, as the same argument applies \emph{mutatis mutandis} for $t=1$ too.
\end{proof}

We now fix $x,y \in \X$ and let $C([0,1],\X) \ni \g \mapsto \iota_{01}(\g)$ denote the convex indicator of the endpoint constraints, i.e.\
\[
\iota_{01}(\g) =
\begin{cases}
  0 & \mbox{if }\g_0 = x \mbox{ and }\g_1 = y,\\
  +\infty &\mbox{otherwise.}
\end{cases}
\]
With this said, we can finally state our $\Gamma$-convergence result, where the finite-entropy assumption on the endpoints is motivated by the previous proposition. 
\begin{theo}\label{t:gconv1}
With the same assumptions and notations as in Setting \ref{setting}, if $x,y \in \X$ are such that $\V(x),\V(y) < \infty$, then
\[
\Gamma-\lim_{\eps \to 0}\Big\{ \mathcal A_\eps +\iota_{01}\Big\} = \mathcal A + \iota_{01}
\]
for the uniform convergence on $C([0,1],\X)$.
If Assumption \ref{hyp1} holds, then the $\Gamma$-convergence also takes place w.r.t.\ the pointwise-in-time $\sigma$-topology. 
\end{theo}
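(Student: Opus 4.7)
The plan is to prove the $\Gamma$-liminf and $\Gamma$-limsup inequalities separately, the former being straightforward and the latter relying critically on Theorem~\ref{theo:upper_bound_recovery_metric}.

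For the $\Gamma$-liminf inequality, I would fix an arbitrary sequence $\g^\eps \to \g$ in $C([0,1],\X)$ for whichever topology is considered. Since $\mathcal{I}\geq 0$ gives $\mathcal{A}_\eps(\g^\eps) \geq \mathcal{A}(\g^\eps)$, it is enough to invoke lower semicontinuity of $\mathcal{A}$ together with the stability of the endpoint constraint $\iota_{01}$. For uniform convergence both facts are standard; for the pointwise-in-time $\sigma$-topology under Assumption~\ref{hyp1}, the lower semicontinuity of $\mathcal{A}$ stems from the $\sigma$-sequential lower semicontinuity of $\sfd$ exactly as in the proof of Proposition~\ref{prop:schro_solvable_iff_finite_entropy}, while the endpoint constraint passes to the limit trivially.

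For the $\Gamma$-limsup inequality, I may assume $\mathcal{A}(\g)+\iota_{01}(\g)<\infty$ (otherwise any constant sequence works), so $\g \in AC^2([0,1],\X)$ with $\g_0=x$, $\g_1=y$, and $\V(x),\V(y)<\infty$ by hypothesis. Set $h_\eps(t) := \eps\min\{t,1-t\}$ and define the recovery sequence $\g^\eps_t := \S_{h_\eps(t)}\g_t$. Theorem~\ref{theo:upper_bound_recovery_metric} immediately gives the endpoint-preserving estimate
\[
\mathcal{A}_\eps(\g^\eps) \leq \tfrac12 e^{\lambda^-\eps}\int_0^1|\dot\g_t|^2\,\dt -2\eps\V(\g^\eps_{1/2}) +\eps\bigl(\V(x)+\V(y)\bigr).
\]
The seemingly annoying term $-2\eps\V(\g^\eps_{1/2})$ is bounded above by $-2\eps c$, where $c$ is a lower bound for $\V$ on any fixed $\sfd$-bounded neighborhood containing the family $\{\g^\eps_{1/2} : \eps\in(0,\eps_0]\}$, such a bound being ensured by Assumption~\ref{item:V_locally_bounded_below}. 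Taking $\limsup_{\eps\downarrow 0}$ makes every $\eps$-dependent term on the right-hand side vanish, leaving exactly $\mathcal{A}(\g)$.

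It remains to verify that $\g^\eps\to\g$ in the appropriate topology. By Remark~\ref{rmk19}, uniform $\sfd$-convergence implies pointwise $\sigma$-convergence, so it suffices to handle the uniform case. I would observe that $(s,x)\mapsto \S_s x$ is jointly continuous on $[0,\infty)\times\X$: combining the $\lambda$-contractivity \eqref{eq:contraction} (which yields Lipschitz continuity in $x$, locally uniformly in $s$) with the $\sfd$-continuity of $s\mapsto\S_s x$ for each fixed $x$. Hence $(s,x)\mapsto \sfd(\S_s x, x)$ is continuous, and in particular uniformly continuous on the compact set $[0,\eps_0/2]\times\g([0,1])$; since it vanishes identically at $s=0$, we conclude $\sup_{t\in[0,1]}\sfd(\g^\eps_t,\g_t)\to 0$ as $\eps\downarrow 0$. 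The main subtlety of the argument is precisely this last step, where joint continuity of the semigroup and compactness of the trajectory $\g([0,1])$ must be exploited together, rather than relying on mere pointwise continuity in $s$.
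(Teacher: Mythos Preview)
Your proof is correct and follows essentially the same route as the paper: the $\Gamma$-liminf via lower semicontinuity of $\mathcal{A}$ and $\iota_{01}$, and the $\Gamma$-limsup via the recovery sequence $\g^\eps_t=\S_{h_\eps(t)}\g_t$ together with Theorem~\ref{theo:upper_bound_recovery_metric} and the local lower bound on $\V$ from \ref{item:V_locally_bounded_below}. The only cosmetic difference is in establishing uniform convergence $\g^\eps\to\g$: the paper uses an explicit finite-net argument on $\g([0,1])$, whereas you invoke joint continuity of $(s,x)\mapsto\S_s x$ and uniform continuity on the compact set $[0,\eps_0/2]\times\g([0,1])$ --- these are the same compactness idea packaged differently.
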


\begin{proof}
The $\Gamma-\liminf$ inequality is rather clear, since the kinetic energy $\mathcal{A}$ is lower semicontinuous both w.r.t.\ uniform-in-time $\sfd$-convergence and pointwise-in-time $\sigma$-convergence:
for the former topology the fact is well known, for the latter it has been discussed in the proof of Proposition \ref{prop:schro_solvable_iff_finite_entropy}.
An analogous claim is also true for the convex indicator $\iota_{01}$.
As a consequence, we have that for any $\g^ \eps$ converging to $\g$ uniformly in time in the metric topology or pointwise in time in the topology $\sigma$ (if applicable) it holds
\[
\begin{split}
\mathcal A(\g)+\iota_{01}(\g) 
& \leq \liminf_{\eps \downarrow 0} \mathcal A(\g^ \eps) + \liminf_{\eps \downarrow 0}\iota_{01}(\g^ \eps) \leq \liminf_{\eps \downarrow 0} \Big\{\mathcal A(\g^ \eps)+\iota_{01}(\g^ \eps)\Big\} \\
& \leq \liminf_{\eps \downarrow 0} \Big\{\mathcal A(\g^ \eps)+\eps^2\mathcal I(\g^ \eps)+\iota_{01}(\g^ \eps)\Big\} = \liminf_{\eps \downarrow 0} \Big\{ \mathcal A_\eps(\g^ \eps)+\iota_{01}(\g^ \eps)\Big\},
\end{split}
\]
whence the desired $\Gamma-\liminf$ inequality.

For the $\Gamma-\limsup$, take any $(\g_t) \in AC^2([0,1],\X)$ connecting $x$ to $y$ (if it does not exist, then there is nothing to prove).
Then Theorem~\ref{theo:upper_bound_recovery_metric} precisely provides a recovery sequence $\g^\eps_t := \S_{h_{\eps}(t)}\g_t$ with $h_\eps$ defined as therein, both for the uniform-in-time $\sfd$-convergence and the pointwise-in-time $\sigma$-convergence (the latter is an easy consequence of the former by Remark \ref{rmk19}).
To prove this claim, note that for any $n \in \mathbb{N}$ there exist $t_1,...,t_k \in [0,1]$ such that, for any $t \in [0,1]$, $\sfd(\g_t,\g_{t_i}) < 1/n$ for at least one $t_i$;
in addition, since $\g_t^\eps \to \g_t$ for all $t \in [0,1]$ there exists $\eps_n$ small enough such that $\sfd(\g_{t_i},\g_{t_i}^\eps) < 1/n$ for all $\eps < \eps_n$ and $i=1,...,k$.
As a consequence, taking \eqref{eq:contraction} into account,
\[
\begin{split}
\sfd(\g_t,\g_t^\eps) & \leq \sfd(\g_t,\g_{t_i}) + \sfd(\g_{t_i},\S_{h_\eps(t)}\g_{t_i}) + \sfd(\S_{h_\eps(t)}\g_{t_i},\g_t^\eps) \\
& \leq \sfd(\g_t,\g_{t_i}) + \sfd(\g_{t_i},\S_{h_\eps(t)}\g_{t_i}) + e^{-\lambda h_\eps(t)}\sfd(\g_t,\g_{t_i}) \\
& \leq \frac{1}{n}(2+e^{\lambda^-\eps/2})
\end{split}
\] 
for all $t \in [0,1]$ and $\eps < \eps_n$ and by the arbitrariness of $n$ we conclude that $\g^\eps \to \g$ uniformly.
Furthermore, the $\limsup$ inequality can be proved as follows:
\[
\begin{split}
\limsup_{\eps \downarrow 0}\Big\{ \mathcal A_\eps(\g^ \eps) & + \iota_{01}(\g^ \eps)\Big\} = \limsup_{\eps \downarrow 0} \Big\{\mathcal A(\g^ \eps)+\eps^2\mathcal I(\g^ \eps)+0\Big\} \\
& \overset{\eqref{eq:upper_bound_recovery_metric}}{\leq} \limsup_{\eps \downarrow 0} \Big\{e^{\lambda^-\eps} \mathcal A(\g) - 2\eps\V(\g^\eps_{1/2}) + \eps\big(\V(x)+\V(y)\big) \Big\} \\
& \leq \limsup_{\eps \downarrow 0} \Big\{e^{\lambda^-\eps} \mathcal A(\g) + \eps\big(\V(x)+\V(y)\big) \Big\} - 2\liminf_{\eps \downarrow 0}\eps\V(\g^\eps_{1/2}) \\
& \leq \mathcal A(\g) = \mathcal A(\g)+\iota_{01}(\g),
\end{split}
\]
where the third inequality comes from the fact that, for any $\eps \downarrow 0$, $(\g_{1/2}^\eps)$ is contained in a bounded set and by assumption $\V$ is bounded from below on bounded sets, whence $\V(\g^\eps_{1/2}) \geq c$ for some $c \in \mathbb{R}$.
The proof is thus complete.
\end{proof}

As an easy consequence of this result we obtain the following:

\begin{cor}
\label{c:gc}
With the same assumptions and notations as in Setting \ref{setting} and under the further requirements that Assumption \ref{hyp1} holds and the Schr\"odinger problem \eqref{eq:schrodinger_pb} relative to $x,y \in \X$ is solvable, let $\eps_k \downarrow 0$ and $\omega^k$ be a minimizer of the corresponding Schr\"odinger problem \eqref{eq:schrodinger_pb} with $\eps=\eps_k$. 

Then
\[
\lim_{k \to \infty}\Big\{\mathcal{A}(\omega^k) + \eps_k^2\mathcal{I}(\omega^k)\Big\} = \inf_{(\g_t) \,:\, x \leadsto y} \mathcal{A}(\g).
\]
Moreover, there exists $\omega^0 \in C([0,1],\X)$ such that, up to a subsequence, $\omega^k \to \omega^0$ in the pointwise-in-time $\sigma$-topology and
\[
\mathcal{A}(\omega^0) = \inf_{(\g_t) \,:\, x \leadsto y} \mathcal{A}(\g).
\]
\end{cor}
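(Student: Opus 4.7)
The plan is to derive this corollary from Theorem~\ref{t:gconv1} by combining the $\Gamma$-convergence statement with the standard equi-coercivity/compactness argument; the required compactness is essentially the one already deployed in the proof of Proposition~\ref{prop:schro_solvable_iff_finite_entropy}. Note that Proposition~\ref{prop:schro_solvable_iff_finite_entropy} turns the solvability hypothesis into the concrete information $\V(x),\V(y)<\infty$ together with the existence of an admissible curve $(\g_t)\in AC([0,1],\X)$, which after reparametrization can be taken in $AC^2([0,1],\X)$; this is precisely the regime in which Theorem~\ref{theo:upper_bound_recovery_metric} applies.

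First I would obtain a uniform upper bound on $\mathcal{A}_{\eps_k}(\omega^k)$. For each admissible $\g\in AC^2$ one has, by feeding the regularization $\g^{\eps_k}:=\S_{h_{\eps_k}(\cdot)}\g$ into \eqref{eq:upper_bound_recovery_metric}, that $\limsup_k\mathcal{A}_{\eps_k}(\g^{\eps_k})\leq \mathcal{A}(\g)$. The minimality of $\omega^k$ then gives $\mathcal{A}_{\eps_k}(\omega^k)\leq \mathcal{A}_{\eps_k}(\g^{\eps_k})$, and taking the infimum over $\g$ yields
\[
\limsup_{k\to\infty}\mathcal{A}_{\eps_k}(\omega^k)\leq \inf_{(\g_t)\,:\,x\leadsto y}\mathcal{A}(\g).
\]
In particular $\mathcal{A}(\omega^k)\leq \mathcal{A}_{\eps_k}(\omega^k)\leq C$ uniformly in $k$, which via the H\"older estimate $\sfd(\omega^k_s,\omega^k_t)\leq \sqrt{2C}\,|s-t|^{1/2}$ (exactly as in \eqref{eccurve1}) confines the traces $\{\omega^k_t\}_{k,t}$ to a single $\sfd$-bounded set.

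Next I would extract a $\sigma$-pointwise limit. Assumption \ref{hyp1} makes this set sequentially $\sigma$-relatively compact, and the refined Arzel\`a--Ascoli lemma \cite[Proposition 3.3.1]{AmbrosioGigliSavare08} produces, up to a subsequence, some $\omega^0\in C([0,1],\X)$ such that $\omega^k_t\stackrel{\sigma}{\to}\omega^0_t$ for every $t\in[0,1]$. The endpoints are preserved trivially because $\omega^k_0=x$ and $\omega^k_1=y$ for all $k$, so $\omega^0$ is itself admissible. Invoking now the lower semicontinuity of $\mathcal{A}$ with respect to pointwise-in-time $\sigma$-convergence (the same fact used in the proofs of Proposition~\ref{prop:schro_solvable_iff_finite_entropy} and of the $\Gamma$-$\liminf$ part of Theorem~\ref{t:gconv1}) and combining with the bound above yields the chain
\[
\mathcal{A}(\omega^0)\leq \liminf_{k\to\infty}\mathcal{A}(\omega^k)\leq \liminf_{k\to\infty}\mathcal{A}_{\eps_k}(\omega^k)\leq \limsup_{k\to\infty}\mathcal{A}_{\eps_k}(\omega^k)\leq \inf_{\g\,:\,x\leadsto y}\mathcal{A}(\g).
\]
Admissibility of $\omega^0$ forces all these quantities to coincide with $\inf\mathcal{A}$, which simultaneously identifies $\omega^0$ as a minimizer of $\mathcal{A}$ and delivers the desired convergence $\mathcal{A}_{\eps_k}(\omega^k)+\eps_k^2\mathcal{I}(\omega^k)\to \inf\mathcal{A}$.

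There is no real obstacle in this argument: every ingredient (recovery sequence with controlled action, H\"older equi-continuity, $\sigma$-Arzel\`a--Ascoli, lower semicontinuity of $\mathcal{A}$ under pointwise $\sigma$-convergence) has already been prepared earlier in the paper. The only point worth flagging is that one must invoke Proposition~\ref{prop:schro_solvable_iff_finite_entropy} at the outset to upgrade the bare solvability hypothesis into $\V(x),\V(y)<\infty$, without which Theorem~\ref{theo:upper_bound_recovery_metric} — the source of both the recovery sequence and the uniform action bound — could not be applied.
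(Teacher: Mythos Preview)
Your proof is correct and follows essentially the same route as the paper: establish a uniform bound on $\mathcal{A}(\omega^k)$, extract a $\sigma$-pointwise limit via the refined Arzel\`a--Ascoli lemma under Assumption~\ref{hyp1}, and conclude by sandwiching with the $\Gamma$-$\liminf$/$\limsup$ inequalities. The only cosmetic difference is that the paper obtains the uniform action bound more directly by comparing with a single fixed competitor $\omega^{\overline\eps}\in\Lambda_{\overline\eps}(x,y)$ for $\overline\eps:=\sup_k\eps_k$ (namely $\mathcal{A}_{\eps_k}(\omega^k)\leq \mathcal{A}_{\eps_k}(\omega^{\overline\eps})\leq \mathcal{A}_{\overline\eps}(\omega^{\overline\eps})$), whereas you rebuild the recovery sequence to get the same conclusion; both are fine.
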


\begin{proof}
Recall that, under a mild equicoercivity condition, $\Gamma$-convergence precisely guarantees that the limit of the optimal values of the approximating problems is the optimal value of the limit problem and limits of minimizers are minimizers, cf.\ \cite[Theorem 1.21]{Braides02}.
In view of Theorem~\ref{t:gconv1} and \cite[Theorem 1.21]{Braides02}, for the mild equi-coercivity condition to hold it suffices to prove that the set of minimizers $\{\omega^k\}$ is relatively compact in the pointwise-in-time $\sigma$-topology.
To this aim, the kinetic energies of the curves $\omega^k$ are uniformly bounded since
\[
\mathcal A(\omega^k) \leq \mathcal A(\omega^k) + \eps_k^2 \mathcal I(\omega^k) \leq \mathcal
 A(\omega^{\overline\eps}) + \eps_k^2 \mathcal I(\omega^{\overline\eps}) \leq \mathcal A(\omega^{\overline\eps}) + \overline{\eps}^2 \mathcal I(\omega^{\overline\eps})<+\infty,
\]
where $\omega^{\overline\eps}$ is the minimizer for the problem with $\eps=\overline\eps := \sup_k{\eps_k}$. Arguing as in the proof of Proposition \ref{prop:schro_solvable_iff_finite_entropy}, we deduce that there exists a continuous curve $(\omega^0_t)_{t\in [0,1]}$ connecting $x$ and $y$ such that, up to extracting a suitable subsequence, $\omega^k_t\to \omega^0_t$ w.r.t.\ $\sigma$ as $k \to \infty$ for all $t \in [0,1]$.
\end{proof}

\begin{rmk}{\rm
Note that in Corollary \ref{c:gc} the curve $\omega^0$ is length-minimizing but not necessarily distance-minimizing, namely it needs not be a geodesic between $x$ and $y$, since we only know that
\[
\inf_{(\g_t) \,:\, x \leadsto y} \mathcal{A}(\g) \geq \frac{1}{2}\sfd^2(x,y)
\]
and the inequality might be strict, e.g.\ if $\X$ is a non-convex subset of $\mathbb{R}^d$. 
However, if $(\X,\sfd)$ is a length metric space, i.e.\ for all $x,y \in \X$ and $\eps > 0$ there exists $(\g_t) \in AC([0,1],\X)$ such that $\g_0 = x$, $\g_1 = y$ and $\ell(\g) \leq \sfd(x,y)+\eps$, then the inequality above turns out to be an identity and, as a consequence, $\omega^0$ is a geodesic.
This means that for any two points having finite energy there always exists a geodesic connecting them.
}\fr 
\end{rmk}

When the endpoints have infinite entropy, the following variant of Theorem \ref{t:gconv1} may be useful:

\begin{theo} \label{t:gconv2}
With the same assumptions and notations as in Setting \ref{setting}, let $x,y \in \X$ with possibly $\V(x),\V(y)=+\infty$ and for any fixed $(\eps_n)_{n \in \mathbb{N}}$, $\eps_n \downarrow 0$, let $(\eta_n)_{n \in \mathbb{N}}$ be converging to 0 slowly enough so that
$$
\eps_n \left(\V(\g_0^n)+\V(\g_1^n\right))\to 0
\qqtext{with}
\g_0^n:=\S_{\eta_n}x,\quad \g_1^n:=\S_{\eta_n}y.
$$
Then
\[
\Gamma-\lim\limits_{n\to\infty}\Big\{ \mathcal A_{\eps_n} +\iota^n_{01}\Big\} = \mathcal A +\iota_{01},
\] 
for the uniform convergence on $C([0,1],\X)$.
If Assumption \ref{hyp1} holds, then the $\Gamma$-convergence also takes place w.r.t. the pointwise-in-time $\sigma$-topology.
Here $\iota_{01}^n$ and $\iota_{01}$ are the convex indicators of the endpoint constraints for $\g_0^n,\g_1^n$ and $x,y$, respectively.
\end{theo}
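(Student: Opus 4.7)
The plan is to reuse the same pseudo-Riemannian machinery as in Theorem~\ref{t:gconv1}, but to replace the recovery construction $h_\eps(t)=\eps\min\{t,1-t\}$ by a shifted version $h_n(t) := \eta_n+\eps_n\min\{t,1-t\}$ so that the regularized curve automatically matches the $n$-dependent endpoint constraints. The decisive observation is that, although $\V(x)$ and $\V(y)$ may be infinite, the shifted endpoints $\g_0^n=\S_{\eta_n}x$ and $\g_1^n=\S_{\eta_n}y$ lie in $D(\V)$ for every $n\geq 1$ by the instantaneous regularizing effect of the $\EVI_\lambda$-flow, and they converge respectively to $x$ and $y$ as $n\to\infty$ by continuity of $s\mapsto\S_s x$ at $s=0$.

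For the $\Gamma$-liminf, consider any $\g^n\to\g$ (uniformly in $\sfd$, or pointwise in $\sigma$ under Assumption~\ref{hyp1}) with finite $\liminf_n\{\mathcal A_{\eps_n}(\g^n)+\iota^n_{01}(\g^n)\}$. Along a subsequence realizing the $\liminf$, the constraints $\g^n_0=\g_0^n\to x$ and $\g^n_1=\g_1^n\to y$ hold, while $\g^n_0\to\g_0$ and $\g^n_1\to\g_1$ in the chosen topology; by Remark~\ref{rmk19} (or by direct $\sfd$-continuity) this forces $\g_0=x$, $\g_1=y$, i.e.\ $\iota_{01}(\g)=0$. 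Lower semicontinuity of $\mathcal A$, already invoked in the proof of Theorem~\ref{t:gconv1}, then delivers the inequality. For the $\Gamma$-limsup, pick any $\g$ with $\mathcal A(\g)+\iota_{01}(\g)<\infty$ (otherwise there is nothing to show), reparametrize so that $\g\in AC^2([0,1],\X)$, and define $\g^n_t:=\S_{h_n(t)}\g_t$. Since $h_n\geq\eta_n>0$ everywhere on $[0,1]$, Lemma~\ref{lem:qtilda_is_AC} (extended to the closed interval by the subsequent remark) and Lemma~\ref{lem:Vqtilda_is_AC} apply, giving $\g^n\in AC([0,1],\X)$ and $t\mapsto\V(\g^n_t)\in AC([0,1])$. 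Integrating the pseudo-Riemannian estimate \eqref{eq:pseudo_Riemannian_estimate} separately on $[0,1/2]$ and $[1/2,1]$ and integrating by parts the term $h_n'(t)\ddt\V(\g^n_t)$ — exactly as in the proof of Theorem~\ref{theo:upper_bound_recovery_metric}, with now $h_n'=\pm\eps_n$ piecewise and $h_n(0)=h_n(1)=\eta_n$ — produces
\[
\mathcal A_{\eps_n}(\g^n)+2\eps_n\V(\g^n_{1/2})\leq \tfrac{1}{2}e^{\lambda^-(2\eta_n+\eps_n)}\int_0^1|\dot\g_t|^2\,\d t+\eps_n\bigl(\V(\g_0^n)+\V(\g_1^n)\bigr).
\]
Because $\{\g^n_{1/2}\}_n$ remains $\sfd$-bounded (from the uniform convergence $\g^n\to\g$ established below), $\V$ is bounded from below on bounded sets by \ref{item:V_locally_bounded_below}, so the term $-2\eps_n\V(\g^n_{1/2})$ passes harmlessly to $\limsup\leq 0$; the exponential prefactor tends to $1$ and the boundary contribution vanishes precisely by the calibration hypothesis on $\eta_n$. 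The $\sfd$-uniform convergence $\g^n\to\g$ (and hence, under Assumption~\ref{hyp1}, the pointwise $\sigma$-convergence) is obtained by the same triangle-inequality and contractivity argument used in the proof of Theorem~\ref{t:gconv1}, now applied with $h_n(t)$ in place of $h_\eps(t)$ and using that $h_n\to 0$ uniformly.

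The main obstacle is the $\Gamma$-limsup in the regime where $\V(x)=\V(y)=+\infty$: the recovery sequence has to simultaneously hit the perturbed endpoints $\g_0^n,\g_1^n$ with finite but possibly exploding entropy, stay uniformly close to $\g$, and keep the Fisher contribution $\eps_n^2\mathcal I(\g^n)$ controlled. The additive split $h_n=\eta_n+\eps_n\min\{t,1-t\}$ is what decouples these requirements: the constant part $\eta_n$ enforces the boundary matching and ensures the applicability of Lemmas~\ref{lem:qtilda_is_AC}--\ref{lem:Vqtilda_is_AC} on the whole $[0,1]$, while the hat part $\eps_n\min\{t,1-t\}$ reproduces the recovery construction of Theorem~\ref{theo:upper_bound_recovery_metric}; the only remaining error term is then the boundary contribution $\eps_n(\V(\g_0^n)+\V(\g_1^n))$, whose vanishing along the chosen $(\eta_n)$ is exactly the assumption imposed in the statement.
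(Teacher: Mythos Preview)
Your proof is correct and follows essentially the same approach as the paper: the same recovery sequence $\g^n_t=\S_{\eta_n+\eps_n\min\{t,1-t\}}\g_t$ is used, and the resulting boundary contribution $\eps_n(\V(\g_0^n)+\V(\g_1^n))$ is killed by the calibration hypothesis. The only cosmetic difference is that the paper obtains the key estimate by applying Theorem~\ref{theo:upper_bound_recovery_metric} to the pre-shifted curve $\S_{\eta_n}\g$ and then bounding $\mathcal A(\S_{\eta_n}\g)\leq e^{-2\lambda\eta_n}\mathcal A(\g)$ via \eqref{eq:pseudo_Riemannian_estimate} with constant $h\equiv\eta_n$, whereas you integrate \eqref{eq:pseudo_Riemannian_estimate} directly with the combined $h_n$; the two routes produce the same inequality.
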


\begin{proof} 
The proof of the $\Gamma-\liminf$ is almost identical to that in Theorem~\ref{t:gconv1}, with the only extra observation that 
\[
\iota_{01}(\g) \leq \liminf_{n \to \infty} \iota_{01}^n(\g^ n).
\] 
For the $\Gamma-\limsup$, observe that if there does not exist $(\g_t) \in AC^2([0,1],\X)$ joining $x$ and $y$, then there is nothing to prove.
Hence let us suppose that at least one curve $(\g_t) \in AC^2([0,1],\X)$ connecting $x$ and $y$ exists, fix it and note that Theorem~\ref{theo:upper_bound_recovery_metric} applied to the curve $\S_{\eta_n}\g_t$ still provides a recovery sequence $\g^{\eps_n}_t := \S_{\eta_n+h_{\eps_n}(t)}\g_t$ with the same choice $h_{\eps_n}(t)=\eps_n\min\{t,1-t\}$ as before.
Indeed, on the one hand 
\[
\begin{split}
\limsup_{n \to\infty}\Big\{ & \mathcal A_{\eps_n}(\g^ {\eps_n}) + \iota_{01}^n(\g^ {\eps_n})\Big\} 
= \limsup_{n \to\infty} \Big\{\mathcal A(\g^ {\eps_n}) + \eps^2_n\mathcal I(\g^ {\eps_n}) + 0 \Big\} 
\\
& \stackrel{\eqref{eq:upper_bound_recovery_metric}}{\leq} \limsup_{n \to\infty} \Big\{ e^{\lambda^-\eps_n}\mathcal A(\S_{\eta_n} \g) - 2\eps_n \V(\g_{1/2}^{\eps_n}) + \eps_n\big(\V(\g_0^n)+\V(\g_1^n)\big) \Big\}
\\
& \leq \limsup_{n \to\infty} \Big\{ e^{\lambda^-\eps_n}\mathcal A(\S_{\eta_n} \g) + \eps_n\big(\V(\g_0^n)+\V(\g_1^n)\big) \Big\} - 2\liminf_{\eps \downarrow 0}\eps\V(\g^\eps_{1/2})
\\
& \leq \limsup_{n \to\infty} \mathcal A(\S_{\eta_n} \g) \leq \limsup_{n \to\infty} e^{-2\lambda \eta_n}\mathcal A(\g) = \mathcal A(\g),
\end{split}
\]
where the third inequality follows by the same argument adopted in the proof of the previous theorem and the last one is due to \eqref{eq:pseudo_Riemannian_estimate} with $h(t) \equiv \eta_n$. 
On the other hand, $\g^{\eps_n}_t \to \g_t$ uniformly in $t \in [0,1]$	in the $\sfd$-topology and, if Assumption \ref{hyp1} holds, for all $t \in [0,1]$ w.r.t.\ $\sigma$: the argument described in the previous proof applies also here verbatim.
\end{proof}

As conclusion, in the next proposition we show that any $\EVI$-gradient flow is a solution of the Schr\"odinger problem with suitable endpoints.
Intuitively this is clear, because up to a rescaling factor $\eps^2$ both the trajectories of the gradient flow of $\V$ and the solutions to \eqref{eq:schrodinger_pb} must formally satisfy the same Newton equation, namely $\ddot \g_t=-\nabla \Phi(\g_t)$ where the potential $\Phi$ is given by (minus) the Fisher information $-|\partial \V|^2$, cf. \cite[Remark 6]{GLR18}.
This is also in complete analogy with the standard Schr\"odinger problem, which includes the heat flow as a particular entropic interpolation.

\begin{prop} \label{p:gfs}
With the same assumptions and notations as in Setting \ref{setting}, fix $\eps > 0$.
Then for all $x,y \in \X$ the following lower bound on the optimal value of \eqref{eq:schrodinger_pb} holds
\begin{equation}\label{eq:lower bound}
\inf_{(\g_t) \,:\, x \leadsto y}\mathcal{A}_\eps(\g) \geq \eps\big|\V(x) - \V(y)\big|.
\end{equation}
If either $y = \S_\eps x$ or $x = \S_\eps y$, then equality is achieved. 
In the former case the curve $[0,1] \ni t \mapsto \hat{\g}_t := \S_{\eps t} x$ is a minimizer in the Schr\"odinger problem and the optimal value is
\[
\inf_{(\g_t) \,:\, x \leadsto y}\mathcal{A}_\eps(\g) = \eps\big(\V(x) - \V(\S_\eps x)\big).
\]
An analogous statement holds when $x = \S_\eps y$.
\end{prop}

\begin{proof}
By \eqref{eq:strong upper gradient} and Young's inequality it follows that for any $(\tilde{\g}_t) \in AC^2([0,\eps],\X)$ joining $x$ and $y$ (if it exists; if not, \eqref{eq:lower bound} is trivial) it holds
\[
\big|\V(\tilde{\g}_0) - \V(\tilde{\g}_\eps)\big| \leq \frac{1}{2}\int_0^\eps |\dot{\tilde{\g}}_t|^2\,\dt + \frac{1}{2}\int_0^\eps |\partial \V|^2(\tilde{\g}_t)\,\dt.
\]
By setting $\g_t := \tilde{\g}_{\eps t}$, $t \in [0,1]$, and by the arbitrariness of $\tilde{\g}$ we thus see that for all $(\g_t) \in AC^2([0,1],\X)$ joining $x$ and $y$ we have
\[
\eps \big|\V(\g_0) - \V(\g_1)\big| \leq \frac12\int_0^1 |\dot{\g}_t|^2\,\dt + \frac{\eps^2}{2}\int_0^1 |\partial \V|^2(\g_t)\,\dt,
\]
so that
\[
\eps\big|\V(\g_0) - \V(\g_1)\big| \leq \inf_{(\g_t) \,:\, x \leadsto y}\mathcal{A}_\eps(\g).
\]
Now assume that $y = \S_\eps x$: integrating \eqref{eq:speed=slope} for the $\EVI_\lambda$-gradient flow $\hat{\g}$ (paying attention to the rescaling factor $\eps$) between 0 and 1 we get
\[
\mathcal{A}_\eps(\hat \g) = \frac12\int_0^1 |\dot{\hat{\g}}_t|^2\,\dt + \frac{\eps^2}{2}\int_0^1 |\partial \V|^2(\hat{\g}_t)\,\dt = \eps\big(\V(x) - \V(y)\big) = \eps\big|\V(x) - \V(y)\big|,
\]
where the last equality comes from the fact that $t \mapsto \V(\S_t x)$ is non-increasing, as a consequence of \eqref{eq:speed=slope}.
Combining this identity with \eqref{eq:lower bound} yields the conclusion.
\end{proof}

\subsection{Displacement convexity}

In analogy with Section \ref{sec:convexity-heuristic}, in this short section we establish the geodesic $\lambda$-convexity of $\V$.
As already explained in the Introduction, here \textbf{we do not claim any novelty of the result} (cf.\ \cite[Theorem 3.2]{DaneriSavare08}). Our proof is however independent and new, being a further evidence of the wide range of applications of the Schr\"odinger problem. 
Let us stress once more that all the properties of $\EVI_\lambda$-gradient flows stated in Section \ref{sec:preliminaries} and used so far do not rely on geodesic $\lambda$-convexity, whence the genuine independence of our approach.

\begin{theo}
\label{prop:V_geodesically_convex_lambda}
With the same assumptions and notations as in Setting \ref{setting}, the potential $\V$ is $\lambda$-convex along \emph{any} geodesic.
\end{theo}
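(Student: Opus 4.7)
The plan is to repeat the formal argument of Theorem~\ref{theo:V_convex} with the rigorous metric estimate of Proposition~\ref{prop:pseudo_Riemannian_estimate} in place of the Riemannian identity \eqref{eq:Riemannian_computation}. If either $\V(q_0)=+\infty$ or $\V(q_1)=+\infty$ the desired inequality is trivial, so I may assume $\V(q_0),\V(q_1)<\infty$ along a fixed geodesic $(q_t)_{t\in[0,1]}$, which in particular lies in $AC^2([0,1],\X)$ with $|\dot q_t|\equiv \sfd(q_0,q_1)$. For a fixed $\theta\in(0,1)$ I reproduce the hat function of Theorem~\ref{theo:V_convex},
\[
H_\theta(t):=\min\Big\{\frac{t}{\theta},\,\frac{1-t}{1-\theta}\Big\},\qquad h_\eps(t):=\eps H_\theta(t),
\]
and I form the regularized curve $q^\eps_t:=\S_{h_\eps(t)}q_t$, which preserves the endpoints since $h_\eps(0)=h_\eps(1)=0$.

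Since $h_\eps$ is piecewise affine with $h_\eps'(0)=\eps/\theta>0$ and $h_\eps'(1)=-\eps/(1-\theta)<0$, Lemma~\ref{lem:Vqtilda_is_AC} applies and grants $t\mapsto \V(q^\eps_t)\in C([0,1])$, so the boundary-term computation carried out in Theorem~\ref{theo:upper_bound_recovery_metric} is fully justified (one integrates \eqref{eq:pseudo_Riemannian_estimate} on $[\delta,\theta]$ and $[\theta,1-\delta]$ and sends $\delta\downarrow 0$). Because $h_\eps'$ is constant on each of the two sub-intervals, the integration by parts reduces $\int_0^1 h_\eps'(t)\ddt\V(q^\eps_t)\,\dt$ to pure boundary contributions, which I collect as
\[
\eps\Big[\frac{\V(q^\eps_\theta)}{\theta(1-\theta)}-\frac{\V(q_0)}{\theta}-\frac{\V(q_1)}{1-\theta}\Big].
\]

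The decisive input is then geodesic optimality: $q^\eps$ joins the same endpoints as $q$, so $\mathcal{A}(q)\leq\mathcal{A}(q^\eps)$, and combining this with the integrated pseudo-Riemannian estimate (after discarding the non-negative Fisher information term $\tfrac12\int_0^1|h_\eps'(t)|^2|\partial\V|^2(q^\eps_t)\,\dt$) yields
\[
\eps\Big[\frac{\V(q^\eps_\theta)}{\theta(1-\theta)}-\frac{\V(q_0)}{\theta}-\frac{\V(q_1)}{1-\theta}\Big]\leq \frac{\sfd^2(q_0,q_1)}{2}\int_0^1\Big(e^{-2\lambda\eps H_\theta(t)}-1\Big)\,\dt,
\]
where I also used $|\dot q_t|^2\equiv \sfd^2(q_0,q_1)$. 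Multiplying by $\theta(1-\theta)/\eps>0$ gives
\[
\V(q^\eps_\theta)-(1-\theta)\V(q_0)-\theta\V(q_1)\leq \theta(1-\theta)\frac{\sfd^2(q_0,q_1)}{2}\cdot\frac{1}{\eps}\int_0^1\Big(e^{-2\lambda\eps H_\theta(t)}-1\Big)\,\dt.
\]

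Finally I let $\eps\downarrow 0$. By dominated convergence the right-hand integral tends to $-2\lambda\int_0^1 H_\theta(t)\,\dt=-\lambda$, while $q^\eps_\theta\to q_\theta$ by continuity of the semigroup in the ``vertical'' direction. The only subtle point is that $\V$ is merely lower semicontinuous, so I cannot pass to the limit directly on the left; instead I take $\liminf_{\eps\downarrow 0}$ on both sides and use the lower semicontinuity to bound $\V(q_\theta)\leq\liminf_{\eps\downarrow 0}\V(q^\eps_\theta)$, which is exactly enough to obtain the desired upper bound
\[
\V(q_\theta)\leq (1-\theta)\V(q_0)+\theta\V(q_1)-\frac{\lambda}{2}\theta(1-\theta)\sfd^2(q_0,q_1).
\]
The main obstacle is really just this last step: making sure that the one-sided semicontinuity goes in the right direction, which fortunately it does because the estimate delivers an \emph{upper} bound on $\V(q^\eps_\theta)$, and verifying the regularity at $t=0,1$ required by Lemma~\ref{lem:Vqtilda_is_AC}. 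Both are handled by the finiteness of $\V(q_0),\V(q_1)$ combined with the strict signs of $h_\eps'$ at the endpoints.
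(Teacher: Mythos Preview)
Your proof is correct and follows essentially the same approach as the paper: the same hat function $H_\theta$, the same regularized curve $q^\eps_t=\S_{\eps H_\theta(t)}q_t$, the same use of Proposition~\ref{prop:pseudo_Riemannian_estimate} combined with geodesic optimality and the constant-speed property, and the same passage to the limit via lower semicontinuity of $\V$. The paper organizes the computation slightly differently (it first writes $0\leq \mathcal A(q^\eps)-\mathcal A(q)$ and then substitutes), but the content is identical.
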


\begin{proof}
Let $(\g_t)$ be any constant-speed geodesic. We want to prove that
$$
\V(\g_\theta)\leq (1-\theta)\V(\g_0) + \theta \V(\g_1) -\frac{\lambda}{2} \theta(1-\theta)\sfd^2(\g_0,\g_1), \qquad \forall \theta \in [0,1].
$$
We will establish this inequality by carefully estimating at order one as $\eps \downarrow 0$ the defect of optimality, in the geodesic problem from $\g_0$ to $\g_1$, of a suitably regularized version $(\g^ {\eps}_t)$ of the geodesic.

If $\V(\g_0)=+\infty$ or $\V(\g_1)=+\infty$ there is nothing to prove, so we can assume without loss of generality that both endpoints have finite entropy. If $\theta = 0$ or $\theta = 1$ the inequality is trivial as well. 
Fix then an arbitrary parameter $\theta\in(0,1)$ and let
\[
H_\theta(t):=
 \left\{
 \begin{array}{ll}
  \displaystyle{\frac{1}{\theta}} t & \mbox{if }t\in [0,\theta],\\
  - \displaystyle{\frac{1}{1-\theta}(t-1)} & \mbox{if }t\in [\theta,1].
 \end{array}
 \right.
\]
be the hat function centered at $t=\theta$ with height $1$ and vanishing at $t=0,1$.
Setting $h(t):=\eps H_\theta(t)$ for small $\eps>0$, let $(\g^\eps_t)$ be the curve constructed as in Lemma \ref{lem:technical formula}, i.e.
\[
\g^\eps_t := \S_{h(t)}\g_t, \qquad\mbox{for all }t\in[0,1].
\]
Arguing as in the proof of Theorem~\ref{theo:upper_bound_recovery_metric}, it is easily verified that with the current choice of $h$ it is still true that $t \mapsto |\dot \g_t^\eps|$ and $t \mapsto |\partial\V|(\g_t^\eps)$ belong to $AC^2([0,1],\X)$ and $t \mapsto \V(\g_t^\eps)$ to $AC([0,1])$, so that we can integrate \eqref{eq:pseudo_Riemannian_estimate} in time on the whole interval $[0,1]$.
Discarding the non-negative term $|h'(t)|^2|\partial \V|^2(\g^\eps_t)$ and using the optimality of the geodesic $\g$ (namely its optimality between $\g_0$ and $\g_1$) give
\[
\begin{split}
0 & \leq \frac 12 \int_0^1 |\dot \g_t^\eps|^2 \,\dt - \frac 12 \int_0^1 |\dot \g_t|^2 \,\dt 
\\
& \overset{\eqref{eq:pseudo_Riemannian_estimate}}{\leq} -\int_0^1 h'(t)\ddt \V(\g^ \eps_{t})\,\dt + \frac 12 \int_0^1 \left(e^{-2\lambda h(t)}-1\right)|\dot \g_t|^2\,\dt 
\\
& = -\eps\int_0^1 H_\theta'(t)\ddt \V(\g^ \eps_{t}) \,\dt + \frac{\sfd^2(\g_0,\g_1)}{2} \int_0^1  \left(e^{-2\eps\lambda H_\theta(t)}-1\right)\,\dt,
\end{split}
\]
where the last equality follows from the constant speed property of the geodesic $\g$, namely $|\dot \g_t| = \sfd(\g_0,\g_1)$. 
Dividing by $\eps>0$ and leveraging the explicit piecewise constant values of $H_{\theta}'(t)$ on each interval $(0,\theta)$ and $(\theta,1)$ gives
\[
\begin{split}
0 & \leq -\int_0^1 H_\theta'(t)\ddt \V(\g^ \eps_{t})\,\dt + \frac{\sfd^2(\g_0,\g_1)}{2}\underbrace{\int_0^1  \frac{e^{-2\eps\lambda H_\theta(t)}-1}{\eps}\,\dt}_{:=I_\eps}
\\
& = -\int_0^\theta\frac{1}{\theta}\ddt\V(\g^ \eps_{t})\,\dt + \int_\theta^1 \frac{1}{1-\theta} \ddt\V(\g^ \eps_{t})\,\dt + \frac{\sfd^2(\g_0,\g_1)}{2} I_\eps 
\\
& = \frac{1}{\theta}\Big( \V(\g_0) - \V(\g^ \eps_\theta)\Big) +\frac{1}{1-\theta}\Big( \V(\g_1) - \V(\g^ \eps_\theta)\Big) + \frac{\sfd^2(\g_0,\g_1)}{2} I_\epsilon.
\end{split}
\]
Now let us multiply by $\theta(1-\theta)>0$ and rearrange the terms in order to get
\[
\V(\g^\eps_\theta) \leq (1-\theta)\V(\g_0) + \theta\V(\g_1) +\theta(1-\theta)\frac{\sfd^2(\g_0,\g_1)}{2}I_\eps.
\]
It is easy to check that $\int_0^1 H_{\theta}(t)\d t=\frac 12$ for all $\theta$, so that
\[
\lim_{\eps \downarrow 0} I_\eps = -2\lambda\int_0^1 H_\theta(t)\,\dt = -\lambda.
\]
On the other hand, by definition of $\g^ \eps$ and since $h(\theta)=\eps\to 0$ it is clear that $\g^\eps_\theta = \S_{h(\theta)}\g_\theta = \S_\eps \g_\theta \to \g_\theta$ in $\X$ (an $\EVI_\lambda$-gradient flow is continuous up to $t=0$).
By lower semicontinuity of $\V$ this yields
\[
\V(\g_\theta) \leq \liminf_{\eps \downarrow 0} \V(\g^ \eps_\theta) \leq (1-\theta)\V(\g_0)+\theta \V(\g_1) -\frac\lambda 2 \theta(1-\theta)\sfd^2(\g_0,\g_1),
\]
whence the conclusion.
\end{proof}


\section{Derivative of the cost}
\label{sec:derivative_cost}

As a main application of the $\Gamma$-convergence results contained in Theorem \ref{t:gconv1} and Corollary \ref{c:gc} (and, in a wider sense, of their strategy of proof), in this section we investigate the dependence of the optimal value of the Schr\"odinger problem \eqref{eq:schrodinger_pb} on the regularization parameter $\eps$, focusing in particular on the regularity as a function of $\eps$ and on the behaviour in the small-time regime.
More precisely, and denoting
\[
\cost_\eps(x,y) := \inf_{(\g_t) \,:\, x \leadsto y}\Big\{\mathcal{A}(\g) + \eps^2\mathcal{I}(\g)\Big\}, \qquad \forall \eps \geq 0
\]
the optimal entropic cost, we show that $\eps \mapsto \cost_\eps(x,y)$ is (locally) absolutely continuous and admits explicit left and right derivatives in a pointwise sense. The strategy of proof follows an interpolation argument due to De Giorgi.
Moreover, since $\cost_\eps(x,y) \to \cost_0(x,y)$ as $\eps \downarrow 0$ by Corollary \ref{c:gc}, we aim at measuring the error $\cost_\eps(x,y) - \cost_0(x,y)$ and studying the minimizers of the unperturbed problem $\cost_0(x,y)$ selected by $\Gamma$-convergence.
Since we focus here on the dependence on $\eps$ we will assume throughout the whole Section~\ref{sec:derivative_cost} and without further mention the well-posedness of the $\eps$-Schr\"odinger problem:
\begin{hyp}
Fix $x,y \in \X$ and suppose that for some (hence for any, by Proposition \ref{prop:schro_solvable_iff_finite_entropy}) $\eps > 0$ the Schr\"odinger problem \eqref{eq:schrodinger_pb} admits at least one minimizer, in other words the infimum is attained in the definition of $\cost_\eps(x,y)$.
 \end{hyp}
 \noindent
We accordingly denote the set of $\eps$-minimizers as
\[
\Lambda_\eps(x,y) := \Big\{\omega \in AC^2([0,1],\X) \,:\, \omega_0 = x,\,\omega_1 = y
\qtext{and}\mathcal{A}_\eps(\omega) = \cost_\eps(x,y) \Big\}.
\]
Let us start the analysis with a preliminary monotonicity statement for the Fisher information and the entropic cost, which generalizes \cite[Lemma 3.3]{ConTam19}.

\begin{lem}\label{lem:monotone-fisher}
With the same assumptions and notations as in Setting \ref{setting} and for any $0 \leq \eps_1 < \eps_2 < \infty$ there holds
\[
\inf_{\Lambda_{\eps_1}(x,y)}\mathcal{I} \geq \sup_{\Lambda_{\eps_2}(x,y)}\mathcal{I},
\]
with possibly $\inf_{\Lambda_0(x,y)}\mathcal{I} = +\infty$.
Moreover, $\eps \mapsto \cost_\eps(x,y)$ is monotone non-decreasing on $[0,\infty)$.
\end{lem}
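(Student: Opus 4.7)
The monotonicity of $\eps \mapsto \cost_\eps(x,y)$ is essentially trivial and I would handle it first. For any admissible curve $\g \,:\, x \leadsto y$ with $\mathcal{I}(\g) < \infty$ (if no such curve exists, then $\cost_\eps(x,y) \equiv +\infty$ and there is nothing to prove), the non-negativity of the Fisher information gives $\mathcal{A}_{\eps_1}(\g) \leq \mathcal{A}_{\eps_2}(\g)$ for $\eps_1 \leq \eps_2$, and taking the infimum over all such $\g$ yields $\cost_{\eps_1}(x,y) \leq \cost_{\eps_2}(x,y)$.

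For the Fisher information inequality, the plan is a standard cross-comparison argument between two minimizers. Fix arbitrary $\omega^1 \in \Lambda_{\eps_1}(x,y)$ and $\omega^2 \in \Lambda_{\eps_2}(x,y)$. If $\mathcal{I}(\omega^1) = +\infty$ the inequality $\mathcal{I}(\omega^1) \geq \mathcal{I}(\omega^2)$ is vacuous, so we may assume $\mathcal{I}(\omega^1) < +\infty$ (and a fortiori $\mathcal{A}(\omega^1), \mathcal{A}(\omega^2), \mathcal{I}(\omega^2)$ are all finite when $\eps_1 > 0$; when $\eps_1 = 0$ we separately require $\mathcal{I}(\omega^1) < \infty$, otherwise the bound is trivial). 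Applying the optimality of $\omega^1$ against the competitor $\omega^2$ in the $\eps_1$-problem and symmetrically the optimality of $\omega^2$ against $\omega^1$ in the $\eps_2$-problem yields
\begin{align*}
\mathcal{A}(\omega^1) + \eps_1^2 \mathcal{I}(\omega^1) &\leq \mathcal{A}(\omega^2) + \eps_1^2 \mathcal{I}(\omega^2), \\
\mathcal{A}(\omega^2) + \eps_2^2 \mathcal{I}(\omega^2) &\leq \mathcal{A}(\omega^1) + \eps_2^2 \mathcal{I}(\omega^1).
\end{align*}
Adding these two inequalities cancels the kinetic actions and leaves $(\eps_2^2 - \eps_1^2)\mathcal{I}(\omega^2) \leq (\eps_2^2 - \eps_1^2)\mathcal{I}(\omega^1)$, whence $\mathcal{I}(\omega^2) \leq \mathcal{I}(\omega^1)$ after dividing by $\eps_2^2 - \eps_1^2 > 0$. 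Since this bound holds for every $\omega^1 \in \Lambda_{\eps_1}(x,y)$ and every $\omega^2 \in \Lambda_{\eps_2}(x,y)$, we may take the supremum in $\omega^2$ and then the infimum in $\omega^1$ to conclude.

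There is no real obstacle here, the only subtle point being the edge case $\eps_1 = 0$ where minimizers of $\cost_0(x,y)$ need not lie in $D(|\partial \V|)$ so $\mathcal{I}$ may blow up; but precisely in that case the inequality $+\infty \geq \sup_{\Lambda_{\eps_2}} \mathcal{I}$ is automatic, which is exactly the caveat stated in the lemma.
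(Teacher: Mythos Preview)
Your proposal is correct and follows essentially the same cross-comparison argument as the paper: both write the two optimality inequalities for $\omega^1$ and $\omega^2$, add them, and divide by $\eps_2^2-\eps_1^2$. Your monotonicity argument (pointwise $\mathcal{A}_{\eps_1}\leq\mathcal{A}_{\eps_2}$ then infimize) is a minor variant of the paper's (which instead chains $\cost_{\eps_1}\leq \mathcal{A}_{\eps_1}(\omega^2)\leq \mathcal{A}_{\eps_2}(\omega^2)=\cost_{\eps_2}$ via the minimizer $\omega^2$), but both are immediate.
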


\begin{proof}
Let $\eps_1,\eps_2$ as in the statement and choose $\omega^i \in \Lambda_{\eps_i}(x,y)$ for $i=1,2$, so that by optimality
\[
\begin{split}
\mathcal{A}(\omega^1) + \eps_1^2\mathcal{I}(\omega^1) & \leq \mathcal{A}(\omega^2) + \eps_1^2\mathcal{I}(\omega^2), \\
\mathcal{A}(\omega^2) + \eps_2^2\mathcal{I}(\omega^2) & \leq \mathcal{A}(\omega^1) + \eps_2^2\mathcal{I}(\omega^1).
\end{split}
\]
Summing these inequalities and dividing by $\eps_2^2-\eps_1^2$ we obtain $\mathcal{I}(\omega^1) \geq \mathcal{I}(\omega^2)$, and since $\omega^1 \in \Lambda_{\eps_1}$ and $\omega^2 \in \Lambda_{\eps_2}$ are arbitrary the desired conclusion follows.
As regards the last part of the statement, it is sufficient to note that since $\omega^i$ are minimizers of their respective problems and $\eps_1 < \eps_2$,
$$
\cost_{\eps_1}(x,y) = \mathcal{A}(\omega^1) + \eps_1^2\mathcal{I}(\omega^1)
\leq
\mathcal{A}(\omega^2) + \eps_1^2\mathcal{I}(\omega^2)
\leq 
\mathcal{A}(\omega^2) + \eps_2^2\mathcal{I}(\omega^2)
=
\cost_{\eps_2}(x,y).
$$
\end{proof}
Let us then extend Theorem \ref{t:gconv1} and Corollary \ref{c:gc} from $\eps=0$ to any $\eps \geq 0$.
\begin{prop}\label{prop:extended_gamma_convergence}
With the same assumptions and notations as in Setting \ref{setting} and under the additional Assumption \ref{hyp1}, for any $\eps > 0$ there holds
\begin{equation}
\label{eq:extended-gammaconv}
\Gamma-\lim_{\eps' \to \eps}\Big\{ \mathcal{A}_{\eps'} +\iota_{01}\Big\} = \mathcal{A}_\eps + \iota_{01}
\end{equation}
for the pointwise-in-time $\sigma$-topology and
\[
\lim_{\eps' \to \eps}\cost_{\eps'}(x,y) = \cost_\eps(x,y).
\]
Moreover, for any $\eps_k \to \eps$ and any minimizer $\omega^k \in \Lambda_{\eps_k}(x,y)$, there exists a minimizer $\omega \in \Lambda_\eps(x,y)$ such that, up to a subsequence,
\[
\omega_t^k \stackrel{\sigma}{\to} \omega_t, \qquad \forall t \in [0,1]
\]
as $k \to \infty$.
\end{prop}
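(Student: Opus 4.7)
\textbf{Proof plan for Proposition \ref{prop:extended_gamma_convergence}.}
The plan is to first establish the $\Gamma$-convergence \eqref{eq:extended-gammaconv} directly, and then to deduce the convergence of optimal values and the subsequential compactness of minimizers via the standard machinery of $\Gamma$-convergence combined with the monotonicity in Lemma \ref{lem:monotone-fisher}.

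For the $\Gamma-\liminf$ inequality I would take any sequence $\omega^{\eps'} \stackrel{\sigma}{\to} \omega$ pointwise in time and combine the lower semicontinuity of $\mathcal{A}$, $\mathcal{I}$, and $\iota_{01}$ with respect to this topology (the former two were already invoked in Proposition \ref{prop:schro_solvable_iff_finite_entropy}). The only subtlety is that the coefficient in front of $\mathcal{I}$ varies with $\eps'$, but since $(\eps')^2 \to \eps^2 \in (0,\infty)$ this is easily handled by extracting a subsequence along which both $\mathcal{A}(\omega^{\eps'_n})$ and $\mathcal{I}(\omega^{\eps'_n})$ converge in $[0,+\infty]$; lower semicontinuity can then be applied term by term.

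For the $\Gamma-\limsup$ inequality the situation is drastically simpler than at $\eps=0$: no entropic regularization via $\S_{h_\eps(t)}$ is needed, since for $\eps > 0$ any competitor with $\mathcal{A}_\eps(\omega) + \iota_{01}(\omega) < \infty$ automatically satisfies $\mathcal{I}(\omega) < \infty$. Hence the constant recovery sequence $\omega^{\eps'} \equiv \omega$ does the job, because
\[
\mathcal{A}_{\eps'}(\omega) + \iota_{01}(\omega) = \mathcal{A}(\omega) + (\eps')^2\mathcal{I}(\omega) \;\longrightarrow\; \mathcal{A}(\omega) + \eps^2\mathcal{I}(\omega) = \mathcal{A}_\eps(\omega) + \iota_{01}(\omega)
\]
as $\eps' \to \eps$, and the endpoint constraint is trivially preserved.

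For the convergence of costs and the compactness of minimizers I would follow the scheme of Corollary \ref{c:gc}. Since $\eps > 0$, pick $0 < \eps_- < \eps_+ < \infty$ with $\eps_k \in [\eps_-,\eps_+]$ eventually; then Lemma \ref{lem:monotone-fisher} gives
\[
\mathcal{A}(\omega^k) \leq \cost_{\eps_k}(x,y) \leq \cost_{\eps_+}(x,y) < \infty,
\]
so the minimizers have uniformly bounded kinetic energy and hence are equi-H\"older by the same Cauchy-Schwarz estimate as in \eqref{eccurve1}. The Arzel\`a-Ascoli argument from the proof of Proposition \ref{prop:schro_solvable_iff_finite_entropy} then extracts a pointwise-in-time $\sigma$-limit $\omega$ joining $x$ to $y$. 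Combining \eqref{eq:extended-gammaconv} with the equicoercivity just verified yields, via \cite[Theorem 1.21]{Braides02}, both $\omega \in \Lambda_\eps(x,y)$ and $\cost_{\eps_k}(x,y) \to \cost_\eps(x,y)$. The only genuine technical care lies in the subsequence extraction for the $\Gamma-\liminf$; the rest is bookkeeping.
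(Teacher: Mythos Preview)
Your proposal is correct and follows essentially the same approach as the paper: constant recovery sequence for the $\Gamma$-$\limsup$, separate lower semicontinuity of $\mathcal{A}$ and $\mathcal{I}$ for the $\Gamma$-$\liminf$, and the Arzel\`a--Ascoli/equicoercivity argument of Corollary~\ref{c:gc} for the compactness of minimizers. The only cosmetic difference is that the paper handles the varying coefficient by writing $\liminf_{\eps'\to\eps}\{\mathcal{A}(\omega^{\eps'})+\eps^2\mathcal{I}(\omega^{\eps'})\}=\liminf_{\eps'\to\eps}\{\mathcal{A}(\omega^{\eps'})+(\eps')^2\mathcal{I}(\omega^{\eps'})\}$ directly (valid since $(\eps')^2\to\eps^2>0$ forces $\mathcal{I}(\omega^{\eps'})$ to be bounded whenever the liminf is finite), whereas you extract a subsequence; both are equivalent.
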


\begin{proof}
It is sufficient to prove \eqref{eq:extended-gammaconv}, as the other properties follow by a verbatim application of the arguments in the proof of Corollary \ref{c:gc}.

Fix $\eps$ and take $\eps'\to\eps$.
The $\Gamma-\limsup$ inequality is trivial:
if $\g^\eps$ is such that the right-hand side of \eqref{eq:extended-gammaconv} is finite (otherwise there is nothing to prove), then the constant sequence $\g^{\eps'} \equiv \g^\eps$ is an admissible recovery sequence.
For the $\Gamma-\liminf$ inequality, note that the kinetic action $\mathcal{A}$ and the Fisher information $\mathcal{I}$ are lower semicontinuous w.r.t.\ pointwise-in-time $\sigma$-convergence (see the proof of Proposition \ref{prop:schro_solvable_iff_finite_entropy}), and clearly so is the convex indicator.
Hence for any $\g^{\eps'}$ converging to $\g^\eps$ for the pointwise-in-time $\sigma$-topology it holds
\[
\begin{split}
\mathcal{A}_\eps(\g^\eps)+\iota_{01}(\g^\eps) 
& \leq \liminf_{\eps' \to \eps} \Big\{\mathcal{A}_\eps(\g^{\eps'})+\iota_{01}(\g^{\eps'})\Big\} \\
& = \liminf_{\eps' \to \eps} \Big\{\mathcal A(\g^{\eps'})+\eps^2\mathcal{I}(\g^{\eps'})+\iota_{01}(\g^{\eps'})\Big\} \\
& = \liminf_{\eps' \to \eps} \Big\{\mathcal A(\g^{\eps'})+(\eps')^2\mathcal{I}(\g^{\eps'})+\iota_{01}(\g^{\eps'})\Big\} \\
& = \liminf_{\eps' \to \eps} \Big\{ \mathcal{A}_{\eps'}(\g^{\eps'})+\iota_{01}(\g^{\eps'})\Big\}.
\end{split}
\]
\end{proof}

As an immediate consequence of this result we deduce the following

\begin{lem}\label{lem:continuity}
With the same assumptions and notations as in Setting \ref{setting} and under Assumption \ref{hyp1}, the function $\eps \mapsto \cost_\eps(x,y)$ is continuous on $[0,\infty)$.

Moreover, if $\eps \mapsto \omega^\eps$ is a continuous (w.r.t.\ the pointwise-in-time $\sigma$-topology) selection of minimizers, then $\eps \mapsto \mathcal{A}(\omega^\eps)$ and $\eps \mapsto \mathcal{I}(\omega^\eps)$ are also continuous, on $[0,\infty)$ and $(0,\infty)$ respectively.
\end{lem}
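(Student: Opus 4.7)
The plan is to deduce everything from three ingredients already established: the $\Gamma$-convergence results of Proposition~\ref{prop:extended_gamma_convergence} and Corollary~\ref{c:gc}, the lower semicontinuity of $\mathcal{A}$ and $\mathcal{I}$ with respect to pointwise-in-time $\sigma$-convergence (recorded in the proof of Proposition~\ref{prop:schro_solvable_iff_finite_entropy}), and the trivial identity $\cost_\eps(x,y) = \mathcal{A}(\omega^\eps) + \eps^2 \mathcal{I}(\omega^\eps)$ valid at any minimizer.

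For the first statement, continuity at $\eps>0$ is literally the content of Proposition~\ref{prop:extended_gamma_convergence}. For continuity at $\eps=0$, I would pick an arbitrary sequence $\eps_k \downarrow 0$, choose $\omega^k \in \Lambda_{\eps_k}(x,y)$, and invoke Corollary~\ref{c:gc}, which yields precisely
\[
\cost_{\eps_k}(x,y) = \mathcal{A}(\omega^k) + \eps_k^2 \mathcal{I}(\omega^k) \;\longrightarrow\; \inf_{(\g_t)\,:\,x\leadsto y} \mathcal{A}(\g) = \cost_0(x,y).
\]

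For the second statement, fix a $\sigma$-continuous selection $\eps \mapsto \omega^\eps$ and a sequence $\eps_n \to \eps$. By the lower semicontinuity of $\mathcal{A}$ and $\mathcal{I}$ with respect to pointwise-in-time $\sigma$-convergence,
\[
\liminf_{n\to\infty}\mathcal{A}(\omega^{\eps_n}) \geq \mathcal{A}(\omega^\eps), \qquad \liminf_{n\to\infty}\mathcal{I}(\omega^{\eps_n}) \geq \mathcal{I}(\omega^\eps),
\]
while on the other hand the continuity of $\cost$ just proved gives
\[
\mathcal{A}(\omega^{\eps_n}) + \eps_n^2\,\mathcal{I}(\omega^{\eps_n}) = \cost_{\eps_n}(x,y) \;\longrightarrow\; \cost_\eps(x,y) = \mathcal{A}(\omega^\eps) + \eps^2\,\mathcal{I}(\omega^\eps).
\]
Now suppose $\eps>0$. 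Extract any subsequence (not relabeled) along which $\mathcal{A}(\omega^{\eps_n}) \to A \in [\mathcal{A}(\omega^\eps),\infty]$ and $\mathcal{I}(\omega^{\eps_n}) \to I \in [\mathcal{I}(\omega^\eps),\infty]$; both $A$ and $I$ are finite because $\eps_n^2 \mathcal{I}(\omega^{\eps_n})$ remains bounded by the bounded sequence $\cost_{\eps_n}$, and similarly for $\mathcal{A}$. Passing to the limit in the identity above yields $A + \eps^2 I = \mathcal{A}(\omega^\eps) + \eps^2 \mathcal{I}(\omega^\eps)$, which combined with $A\geq\mathcal{A}(\omega^\eps)$ and $I\geq\mathcal{I}(\omega^\eps)$ and with $\eps>0$ forces $A=\mathcal{A}(\omega^\eps)$ and $I=\mathcal{I}(\omega^\eps)$. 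Since the subsequence was arbitrary, the full sequences converge, which is the desired continuity of $\mathcal{A}(\omega^\eps)$ and $\mathcal{I}(\omega^\eps)$ at $\eps>0$. For $\eps=0$ (where only $\mathcal{A}$ is claimed continuous) the same argument reduces to $\mathcal{A}(\omega^{\eps_n}) \leq \cost_{\eps_n}(x,y) \to \cost_0(x,y) = \mathcal{A}(\omega^0)$, which combined with the liminf inequality closes the case.

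The only real subtlety is the two-term balance at $\eps>0$: one must rule out transferring mass between $\mathcal{A}$ and $\mathcal{I}$ in the limit, which is exactly why the strict positivity $\eps>0$ enters crucially (it is what makes the linear system in $(A,I)$ nondegenerate) and why the statement only asserts continuity of $\mathcal{I}$ on $(0,\infty)$ rather than on all of $[0,\infty)$.
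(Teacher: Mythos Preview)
Your proof is correct and follows essentially the same strategy as the paper: continuity of the cost from Proposition~\ref{prop:extended_gamma_convergence} and Corollary~\ref{c:gc}, then lower semicontinuity of $\mathcal{A}$ and $\mathcal{I}$ combined with the identity $\cost_\eps=\mathcal{A}(\omega^\eps)+\eps^2\mathcal{I}(\omega^\eps)$ to upgrade to full continuity. The only cosmetic difference is that the paper bypasses the subsequence extraction by writing directly $\limsup_{\eps'\to\eps}\mathcal{A}(\omega^{\eps'})=\limsup_{\eps'\to\eps}\big(\cost_{\eps'}(x,y)-(\eps')^2\mathcal{I}(\omega^{\eps'})\big)\leq \cost_\eps(x,y)-\eps^2\mathcal{I}(\omega^\eps)=\mathcal{A}(\omega^\eps)$, and symmetrically for $\mathcal{I}$; your sub-subsequence argument reaches the same conclusion by an equivalent route.
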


Note that if the minimizers are unique, then $\eps \mapsto \o^\eps$ is automatically continuous w.r.t.\ the pointwise-in-time $\sigma$-topology, simply by Proposition \ref{prop:extended_gamma_convergence}, as any sequence of minimizers admits a subsequence converging to a minimizer and the limit is in fact unique.
Also, the continuity of the Fisher information can be strengthened up to $\eps=0$, see later on Theorem~\ref{thm:taylor}.

\begin{proof}
The continuity of $\cost_\eps(x,y)$ for $\eps>0$ is granted by Proposition \ref{prop:extended_gamma_convergence}, while continuity at $\eps=0$ has already been proved in Corollary \ref{c:gc}.

As regards the kinetic energy $\mathcal{A}$ and the Fisher information $\mathcal{I}$, recall that they are both lower semicontinuous in $[0,\infty)$ w.r.t.\ the pointwise-in-time $\sigma$-topology, as already discussed in the proof of Proposition \ref{prop:schro_solvable_iff_finite_entropy}. Thus, if $\eps \mapsto \omega^\eps$ is as in the statement, we are left to prove that $\eps \mapsto \mathcal{A}(\omega^\eps)$ and $\eps \mapsto \mathcal{I}(\omega^\eps)$ are upper semicontinuous. To this aim, it is sufficient to observe that
\[
\begin{split}
\limsup_{\eps' \to \eps}\mathcal{A}(\omega^{\eps'}) 
& = \limsup_{\eps' \to \eps}\Big\{\cost_{\eps'}(x,y) - (\eps')^2 \mathcal{I}(\omega^{\eps'})\Big\} \\
& \leq \limsup_{\eps' \to \eps}\cost_{\eps'}(x,y) - \liminf_{\eps' \to \eps} (\eps')^2\mathcal{I}(\omega^{\eps'}) 
\\
& \leq \cost_\eps(x,y) - \eps^2\mathcal{I}(\omega^\eps) = \mathcal{A}(\omega^\eps),
\end{split}
\]
where the last inequality holds by the continuity of $\eps \mapsto \cost_\eps(x,y)$ and the lower semicontinuity of $\eps \mapsto \mathcal{I}(\omega^\eps)$.
Thus $\eps \mapsto \mathcal{A}(\omega^\eps)$ is upper semicontinuous in $[0,\infty)$.
Interchanging $\mathcal{A}$ and $\mathcal{I}$ and writing now $\mathcal{I} = \frac{1}{\eps^2}(\cost_\eps-\mathcal{A})$, the same argument shows that $\eps \mapsto \mathcal{I}(\omega^\eps)$ is upper semicontinuous in $(0,\infty)$ (continuity at $\eps=0$ will require a special treatment later).
\end{proof}
We have now all the ingredients to discuss the regularity of the cost $\cost_\eps(x,y)$ as a function of the noise parameter $\eps$ and explicitly compute its left and right derivatives.

\begin{prop}\label{prop:derivative}
With the same assumptions and notations as in Setting \ref{setting} and if Assumption \ref{hyp1} holds, the map $\eps \mapsto \cost_\eps(x,y)$ is $AC_{loc}([0,\infty))$, left and right differentiable everywhere in $(0,\infty)$ and, for any $\eps > 0$, the left and right derivatives are given by
\begin{equation}
\label{eq:derivative}
\frac{\d^-}{\d\eps}\cost_\eps(x,y)= 2\eps\max_{\Lambda_\eps(x,y)}\mathcal{I}, 
\qquad 
\frac{\d^+}{\d\eps}\cost_\eps(x,y) = 2\eps\min_{\Lambda_\eps(x,y)}\mathcal{I}
\end{equation}
respectively, and the former (resp.\ latter) is left (resp.\ right) continuous.
\end{prop}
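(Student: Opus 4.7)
The plan is to compare $\cost_\eps(x,y)$ and $\cost_{\eps'}(x,y)$ by using minimizers at one parameter as competitors at the other, then pass to limits via the $\Gamma$-convergence of Proposition~\ref{prop:extended_gamma_convergence} combined with the monotonicity of Lemma~\ref{lem:monotone-fisher}. Throughout I will write $m_\eps := \inf_{\Lambda_\eps(x,y)}\mathcal{I}$ and $M_\eps := \sup_{\Lambda_\eps(x,y)}\mathcal{I}$.

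\textbf{Step 1: attainment of min and max.} First I would show that for $\eps>0$ both $m_\eps$ and $M_\eps$ are realized (in particular they are finite, since $M_\eps \leq \cost_\eps(x,y)/\eps^2$). A minimizing (or maximizing) sequence $(\omega^n)\subset\Lambda_\eps(x,y)$ has kinetic energy uniformly bounded by $\cost_\eps(x,y)$, so by Assumption~\ref{hyp1} and the refined Arzelà–Ascoli lemma \cite[Prop.~3.3.1]{AmbrosioGigliSavare08} (as already used in the proof of Proposition~\ref{prop:schro_solvable_iff_finite_entropy}) one extracts a pointwise-in-time $\sigma$-subsequential limit $\omega \in AC([0,1],\X)$. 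Joint lower semicontinuity of $\mathcal{A}$ and $\mathcal{I}$ plus the saturated identity $\mathcal{A}_\eps(\omega^n) = \cost_\eps(x,y)$ force $\omega \in \Lambda_\eps(x,y)$ with $\mathcal{I}(\omega) = m_\eps$. For the max one has to be a little more careful: $\mathcal{A}(\omega) \leq \liminf \mathcal{A}(\omega^n) = \cost_\eps(x,y) - \eps^2 M_\eps$ combined with $\omega \in \Lambda_\eps(x,y)$ gives $\eps^2\mathcal{I}(\omega) \geq \eps^2 M_\eps$, hence $\mathcal{I}(\omega)=M_\eps$.

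\textbf{Step 2: difference-quotient sandwich.} Testing optimality of $\omega^\eps \in \Lambda_\eps(x,y)$ against the parameter $\eps'$ and vice versa, for $\eps' > \eps > 0$ one obtains
\begin{equation*}
(\eps'+\eps)M_{\eps'} \;\leq\; \frac{\cost_{\eps'}(x,y)-\cost_\eps(x,y)}{\eps'-\eps} \;\leq\; (\eps'+\eps)m_\eps,
\end{equation*}
and the symmetric bounds $(\eps+\eps')M_\eps \leq [\cost_{\eps'}-\cost_\eps]/(\eps'-\eps) \leq (\eps+\eps')m_{\eps'}$ for $\eps' < \eps$. Lemma~\ref{lem:monotone-fisher} actually implies that $\eps \mapsto m_\eps$ and $\eps \mapsto M_\eps$ are both non-increasing, with $m_{\eps_1} \geq M_{\eps_2}$ whenever $\eps_1 < \eps_2$. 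Applying Proposition~\ref{prop:extended_gamma_convergence} to sequences realizing $M_{\eps_k}$ (resp.\ $m_{\eps_k}$) with $\eps_k \downarrow \eps$, together with $\sigma$-lower semicontinuity of $\mathcal{I}$, delivers $\lim_{\eps' \downarrow \eps} m_{\eps'} = \lim_{\eps' \downarrow \eps} M_{\eps'} = m_\eps$, and symmetrically $\lim_{\eps' \uparrow \eps} m_{\eps'} = \lim_{\eps' \uparrow \eps} M_{\eps'} = M_\eps$. Inserting these one-sided limits into the sandwich yields formulas~\eqref{eq:derivative} at once, and gives right (resp.\ left) continuity of the right (resp.\ left) derivative for free.

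\textbf{Step 3: $AC_{loc}$-regularity.} Since $m$ and $M$ are non-increasing and satisfy $m_{\eps_1} \geq M_{\eps_2}$ for $\eps_1<\eps_2$, they agree almost everywhere on $(0,\infty)$; let $\iota(\eps)$ denote the common value. For $0 < a < b$ and any partition $a=\eps_0<\dots<\eps_n=b$, the estimates of Step~2 give
\begin{equation*}
\sum_i (\eps_{i+1}^2-\eps_i^2) M_{\eps_{i+1}} \;\leq\; \cost_b(x,y)-\cost_a(x,y) \;\leq\; \sum_i (\eps_{i+1}^2-\eps_i^2) m_{\eps_i},
\end{equation*}
the middle being a telescoping sum; by monotonicity of $m,M$ the two outer sums are Darboux sums which converge as the mesh tends to $0$ to $\int_a^b 2\eps\iota(\eps)\,\d\eps$, yielding $\cost_b(x,y)-\cost_a(x,y) = \int_a^b 2\eps\iota(\eps)\,\d\eps$. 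Sending $a\downarrow 0$ while keeping $b=T$ fixed, continuity of $\cost_{\cdot}(x,y)$ at $0$ (Lemma~\ref{lem:continuity}) and monotone convergence applied to the non-negative integrand extend this identity to $[0,T]$, simultaneously proving integrability of $\eps \mapsto 2\eps\iota(\eps)$ near zero and giving the integral representation $\cost_\eps(x,y) = \cost_0(x,y) + \int_0^\eps 2s\iota(s)\,\d s$ on $[0,T]$, whence $AC_{loc}([0,\infty))$.

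The main obstacle I anticipate is the attainment of the maximum in Step~1: raw lower semicontinuity of $\mathcal{I}$ only produces an upper bound $\mathcal{I}(\omega) \leq M_\eps$, so one must squeeze the matching lower bound out of the saturated identity $\mathcal{A}_\eps(\omega^n) = \cost_\eps(x,y)$ via lower semicontinuity of $\mathcal{A}$. Everything else is a clean combination of $\Gamma$-convergence, the monotonicity of $m$ and $M$, and a standard Darboux argument.
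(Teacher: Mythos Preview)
Your approach is essentially the paper's, repackaged: the sandwich bounds in Step~2 are exactly the manipulations \eqref{eq:difference-quotient}--\eqref{eq:difference-quotient2}, and the compactness/$\Gamma$-convergence input is Proposition~\ref{prop:extended_gamma_convergence}. Two differences are worth noting. First, you establish attainment of $\min$ and $\max$ directly in Step~1, whereas the paper recovers it as a byproduct of the differentiability argument (the limit $\overline\omega^\eps$ of $\eps'$-minimizers along $\eps'\downarrow\eps$ is shown to realize $m_\eps$). Second, your Step~3 Darboux-sum argument is a clean alternative to the paper's route via dominated convergence and the Lebesgue differentiation theorem; both yield the integral representation and hence $AC_{loc}$.

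There is however one genuine gap in Step~2: the claim $\lim_{\eps'\uparrow\eps} M_{\eps'} = M_\eps$ is \emph{not} symmetric to the right-limit case, and $\sigma$-lower semicontinuity of $\mathcal{I}$ alone is insufficient for it. From Lemma~\ref{lem:monotone-fisher} you get only the lower bound $M_{\eps'}\geq m_{\eps'}\geq M_\eps$ for $\eps'<\eps$; for the matching upper bound $\limsup_{\eps'\uparrow\eps} M_{\eps'}\leq M_\eps$ you need the mechanism behind Lemma~\ref{lem:continuity}: if $\omega^k\in\Lambda_{\eps_k}$ realizes $M_{\eps_k}$ and (along a subsequence) $\omega^k\to\omega\in\Lambda_\eps$, then lower semicontinuity of $\mathcal{A}$ (not $\mathcal{I}$) together with $\cost_{\eps_k}\to\cost_\eps$ gives
\[
\mathcal{A}(\omega)\leq\liminf_k\big(\cost_{\eps_k}-\eps_k^2 M_{\eps_k}\big)=\cost_\eps-\eps^2\limsup_k M_{\eps_k},
\]
whence $\limsup_k M_{\eps_k}\leq\mathcal{I}(\omega)\leq M_\eps$. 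The same remark applies to $\lim_{\eps'\uparrow\eps} m_{\eps'}=M_\eps$. This is easy to patch, but the word ``symmetrically'' hides a real asymmetry: for the right limit, monotonicity gives the upper bound and l.s.c.\ of $\mathcal{I}$ the lower; for the left limit the roles swap and it is l.s.c.\ of $\mathcal{A}$ that closes the sandwich.
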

\noindent
Note that it is part of our statement that the maximum and the minimum are attained.

\begin{rmk}{\rm
Heuristically, \eqref{eq:derivative} is nothing but the envelope theorem.
Indeed, if $\eps \mapsto \cost_\eps(x,y)$ were differentiable, then its derivative would be given by $\partial_\eps\mathcal{A}_\eps = 2\eps\mathcal{I}$ evaluated at any critical point, i.e.\ at any $\omega^\eps\in\Lambda_\eps(x,y)$.
However, since we do not know in our general metric framework that Schr\"odinger problem has a unique solution, we are not able to prove pointwise differentiability as in \cite{ConTam19} and we have to face the possibility of a gap between the left and right derivatives.
In any case, for a.e.\ $\eps>0$ this gap is zero, because $\eps \mapsto \cost_\eps(x,y)$ is locally absolutely continuous in $(0,\infty)$ and therefore a.e.\ differentiable.
This means that, up to a negligible set of temperatures, the left and right derivatives match and $\mathcal{I}$ is constant on $\Lambda_\eps(x,y)$. 
All these facts as well as the strategy of proof closely follow a variational interpolation argument which adapts the envelope theorem to the non-smooth setting and, to the best of our knowledge, dates back to De Giorgi.

If for whatever reason the Schr\"odinger problem \eqref{eq:schrodinger_pb} were uniquely solvable (which is in particular true for the classic Schr\"odinger problem, as proved in \cite[Theorem 4.2]{GigTam20}), then the left and right derivatives would be trivially equal and Lemma \ref{lem:continuity} would give that $\eps \mapsto \cost_\eps(x,y)$ is actually $C^1((0,\infty))$.

}\fr 
\end{rmk}

\begin{proof}
The continuity of $\eps \mapsto \cost_\eps(x,y)$ follows by Lemma \ref{lem:continuity}, so let us focus on left and right differentiability/continuity and local absolute continuity. 

\smallskip

\noindent{\bf Right differentiability}.
Fix $\eps>0$, let $\delta>0$, and choose $\omega^\eps \in \Lambda_\eps(x,y)$, $\omega^{\eps+\delta} \in \Lambda_{\eps+\delta}(x,y)$. Then write
\begin{equation}\label{eq:difference-quotient}
\begin{split}
\frac{\cost_{\eps+\delta}(x,y) - \cost_\eps(x,y)}{\delta} & = \frac{\mathcal{A}_{\eps+\delta}(\omega^{\eps+\delta})-\mathcal{A}_\eps(\omega^\eps)}{\delta} \\
& = \frac{\mathcal{A}_{\eps+\delta}(\omega^{\eps+\delta})-\mathcal{A}_{\eps+\delta}(\omega^\eps)}{\delta} + \frac{\mathcal{A}_{\eps+\delta}(\omega^\eps)-\mathcal{A}_\eps(\omega^\eps)}{\delta}
\end{split}
\end{equation}
and note that the second term on the right-hand side can be rewritten as
\[
\mathcal{A}_{\eps+\delta}(\omega^\eps)-\mathcal{A}_\eps(\omega^\eps) = (2\eps\delta + \delta^2)\mathcal{I}(\omega^\eps).
\]
The first one is non-positive by optimality of $\omega^{\eps+\delta}$ for $\mathcal{A}_{\eps+\delta}$, hence we obtain
\[
\limsup_{\delta \downarrow 0}\frac{\cost_{\eps+\delta}(x,y) - \cost_\eps(x,y)}{\delta} \leq \limsup_{\delta \downarrow 0}\,(2\eps + \delta)\mathcal{I}(\omega^\eps) = 2\eps\mathcal{I}(\omega^\eps).
\]
As this inequality holds for any $\omega^\eps \in \Lambda_\eps(x,y)$, we infer that
\begin{equation}\label{eq:limsup}
\limsup_{\delta \downarrow 0}\frac{\cost_{\eps+\delta}(x,y) - \cost_\eps(x,y)}{\delta} \leq 2\eps\inf_{\Lambda_\eps(x,y)}\mathcal{I}.
\end{equation}
On the other hand we can also write
\begin{equation}\label{eq:difference-quotient2}
\begin{split}
\frac{\cost_{\eps+\delta}(x,y) - \cost_\eps(x,y)}{\delta} & = \frac{\mathcal{A}_{\eps+\delta}(\omega^{\eps+\delta})-\mathcal{A}_\eps(\omega^\eps)}{\delta} \\
& = \frac{\mathcal{A}_{\eps+\delta}(\omega^{\eps+\delta})-\mathcal{A}_\eps(\omega^{\eps+\delta})}{\delta} + \frac{\mathcal{A}_\eps(\omega^{\eps+\delta})-\mathcal{A}_\eps(\omega^\eps)}{\delta}.
\end{split}
\end{equation}
Using now the optimality of $\omega^\eps$ for $\mathcal{A}_\eps$, we observe that the second term on the right-hand side is non-negative, whence
\[
\frac{\mathcal{A}_{\eps+\delta}(\omega^{\eps+\delta})-\mathcal{A}_\eps(\omega^\eps)}{\delta} \geq \frac{\mathcal{A}_{\eps+\delta}(\omega^{\eps+\delta})-\mathcal{A}_\eps(\omega^{\eps+\delta})}{\delta} = (2\eps + \delta)\mathcal{I}(\omega^{\eps+\delta}).
\]
For any sequence $\delta_n\downarrow 0$, Proposition \ref{prop:extended_gamma_convergence} guarantees (up to extraction of a subsequence if needed) that $\omega^{\eps+\delta_{n}} \to \overline{\omega}^\eps$ in the pointwise-in-time $\sigma$-topology for some $\overline{\omega}^\eps \in \Lambda_\eps(x,y)$.
By lower semicontinuity of $\mathcal I$ this implies
\[
\liminf_{n \to \infty}\frac{\cost_{\eps+\delta_n}(x,y) - \cost_\eps(x,y)}{\delta_n} \geq \liminf_{n \to \infty}(2\eps + \delta_n)\mathcal{I}(\omega^{\eps+\delta_n}) \geq 2\eps\mathcal{I}(\overline{\omega}^\eps) \geq 2\eps\inf_{\Lambda_\eps(x,y)}\mathcal{I},
\]
and together with \eqref{eq:limsup} this yields
\[
\exists \lim_{n \to \infty}\frac{\cost_{\eps+\delta_n}(x,y) - \cost_\eps(x,y)}{\delta_n} = 2\eps\mathcal{I}(\overline{\omega}^\eps) = 2\eps\inf_{\Lambda_\eps(x,y)}\mathcal{I}.
\]
As the right-hand side does not depend on the particular sequence $\delta_n\downarrow 0$ we conclude that
\[
\exists \lim_{\delta \downarrow 0}\frac{\cost_{\eps+\delta}(x,y) - \cost_\eps(x,y)}{\delta} = 2\eps\min_{\Lambda_\eps(x,y)}\mathcal{I},
\]
in particular $\mathcal{I}$ is minimized by any accumulation point $\overline\omega^\eps$ of $\{\omega^{\eps+\delta}\}_{\delta > 0}$. 

\noindent{\bf Left differentiability}.
The argument is very similar.
Indeed, if $\delta < 0$, then the first term on the right-hand side of \eqref{eq:difference-quotient} is non-negative and the second one can be handled in the same way.
Hence there holds
\[
\liminf_{\delta \uparrow 0}\frac{\cost_{\eps+\delta}(x,y) - \cost_\eps(x,y)}{\delta} \geq 2\eps\mathcal{I}(\omega^\eps),
\]
for any $\omega^\eps \in \Lambda_\eps(x,y)$, and therefore
\[
\liminf_{\delta \uparrow 0}\frac{\cost_{\eps+\delta}(x,y) - \cost_\eps(x,y)}{\delta} \geq 2\eps\sup_{\Lambda_\eps(x,y)}\mathcal{I}.
\]
Applying the same considerations to \eqref{eq:difference-quotient2} and following the same argument as above we retrieve the $\limsup$ inequality, first along \emph{some} subsequence $\delta_n \uparrow 0$ and then along \emph{any} $\delta \uparrow 0$.
Combining with the inequality above gives
\[
\exists \lim_{\delta \uparrow 0}\frac{\cost_{\eps+\delta}(x,y) - \cost_\eps(x,y)}{\delta} = 2\eps\max_{\Lambda_\eps(x,y)}\mathcal{I}, \qquad \forall \eps > 0,
\]
whence the pointwise left differentiability of $\eps \mapsto \cost_\eps(x,y)$.

\noindent{\bf Left and right continuity}.
In order to prove the right continuity of the right derivative of $\eps \mapsto \cost_\eps(x,y)$, note that on the one hand by Lemma \ref{lem:monotone-fisher} for any $\eps_n \downarrow \eps$ it holds
\[
\inf_{\Lambda_\eps(x,y)}\mathcal{I} \geq \limsup_{n \to \infty}\sup_{\Lambda_{\eps_n}(x,y)}\mathcal{I} \geq \limsup_{n \to \infty}\inf_{\Lambda_{\eps_n}(x,y)}\mathcal{I}.
\]
On the other hand, we can assume up to a subsequence if needed that
\[
\liminf_{n \to \infty}\inf_{\Lambda_{\eps_n}(x,y)}\mathcal{I} = \lim_{n \to \infty}\inf_{\Lambda_{\eps_n}(x,y)}\mathcal{I}.
\]
As shown in the proof of right differentiability, $\inf_{\Lambda_{\eps'}(x,y)}\mathcal{I}$ is attained for any $\eps' > 0$, hence in particular $\inf_{\Lambda_{\eps_n}(x,y)}\mathcal{I} = \mathcal{I}(\omega^n)$ for some $\omega^n \in \Lambda_{\eps_n}(x,y)$, for all $n$.
Up to extracting a further subsequence, by Proposition \ref{prop:extended_gamma_convergence} we can assume that $\omega^n \to \overline{\omega}^\eps$ w.r.t.\ the pointwise-in-time $\sigma$-topology for some $\overline{\omega}^\eps \in \Lambda_\eps(x,y)$, and moreover by Lemma \ref{lem:continuity}
\[ 
\lim_{n \to \infty}\mathcal{I}(\omega^n) = \mathcal{I}(\overline{\omega}) \geq \inf_{\Lambda_\eps(x,y)}\mathcal{I}.
\]
Putting all these inequalities together provides us with the right continuity of $\eps \mapsto \inf_{\Lambda_\eps(x,y)}\mathcal{I}$ and, a fortiori, of the right derivative. Left continuity for the left derivative follows along an analogous reasoning.

\noindent{\bf Local absolute continuity}.
Let $0 < \eps_1 < \eps_2 < \infty$ and, for any $0 < \delta < 1$, define
\[
f_\delta(\eps) := \frac{\cost_{\eps+\delta}(x,y)-\cost_\eps(x,y)}{\delta}.
\]
The monotonicity of $\eps \mapsto \cost_\eps(x,y)$ from Lemma \ref{lem:monotone-fisher} gives $f_\delta \geq 0$.
Arguing as in the very beginning of the proof of the right differentiability we see that $f_\delta(\eps) \leq (2\eps+1)\mathcal{I}(\omega^\eps)$ for any $\omega^\eps \in \Lambda_\eps(x,y)$, and by Lemma \ref{lem:monotone-fisher}
\[
f_\delta(\eps) \leq (2\eps_2+1)\sup_{\Lambda_{\eps_1}(x,y)}\mathcal{I} < \infty, \qquad \forall \eps \in (\eps_1,\eps_2].
\]
Hence $|f_\delta| \leq M$ uniformly in $\delta$ and $f_\delta$ converges pointwise to the right derivative of $\eps \mapsto \cost_\eps(x,y)$ as $\delta\downarrow 0$, whence by the dominated convergence theorem
\[
\int_{\eps_1}^{\eps_2}\frac{\d^+}{\d\eps}\cost_\eps(x,y)\,\d\eps = \lim_{\delta \downarrow 0}\int_{\eps_1}^{\eps_2}f_\delta(\eps)\,\d\eps.
\]
The right-hand side can be rewritten as
\[
\begin{split}
\lim_{\delta \downarrow 0}\int_{\eps_1}^{\eps_2}f_\delta(\eps)\,\d\eps & = \lim_{\delta \downarrow 0}\Big(\frac{1}{\delta}\int_{\eps_1}^{\eps_2}\cost_{\eps+\delta}(x,y)\,\d\eps - \frac{1}{\delta}\int_{\eps_1}^{\eps_2}\cost_\eps(x,y)\,\d\eps\Big) \\
& = \lim_{\delta \downarrow 0}\Big(\frac{1}{\delta}\int_{\eps_2}^{\eps_2+\delta}\cost_\eps(x,y)\,\d\eps - \frac{1}{\delta}\int_{\eps_1}^{\eps_1+\delta}\cost_\eps(x,y)\,\d\eps\Big) \\
& = \cost_{\eps_2}(x,y) - \cost_{\eps_1}(x,y),
\end{split}
\]
where the last equality holds by the Lebesgue differentiation theorem for the continuous function $\eps \mapsto \cost_\eps(x,y)$ (cf.\ Lemma \ref{lem:continuity}).
We have thus proved that the cost belongs to $AC_{loc}((0,\infty))$, since
\[
\cost_{\eps_2}(x,y) - \cost_{\eps_1}(x,y) = \int_{\eps_1}^{\eps_2}\frac{\d^+}{\d\eps}\cost_\eps(x,y)\,\d\eps,
\qquad \forall\, 0 < \eps_1 < \eps_2.
\]
For the full $AC_{loc}([0,\infty))$ regularity it is then sufficient to let $\eps_1 \downarrow 0$:
the left-hand side converges to $\cost_{\eps_2}(x,y) - \cost_0(x,y)$ by Lemma \ref{lem:continuity}, and by the monotonicity $\frac{\d^+}{\d\eps}\cost_\eps\geq 0$ the right-hand side also converges by monotone convergence.
\end{proof}
As a consequence we get
\begin{cor}
 The map $\eps\mapsto \cost_\eps(x,y)$ is locally semiconcave, and more precisely for any $\eps_0>0$ there holds
 $$
 \frac{\d^2}{\d\eps^2}\cost_\eps(x,y)\leq 2 \max\limits_{\Lambda_{\eps_0}(x,y)}\mathcal I<+\infty
 $$
 in the sense of distributions in $(\eps_0,+\infty)$.
 In particular $\eps\mapsto \cost_\eps(x,y)$ is twice differentiable almost everywhere.
\end{cor}
\begin{proof}
From Proposition~\ref{prop:derivative} we know that, being absolutely continuous, the map $\eps\mapsto \cost_\eps(x,y)$ is differentiable a.e., in particular the left and right derivatives agree and thus
$$
\mathcal I_\eps(x,y)
:=
\max_{\Lambda_\eps(x,y)}\mathcal{I}
=\min_{\Lambda_\eps(x,y)}\mathcal{I}
$$
for a.a. $\eps>0$ with
$$
\frac{\d}{\d\eps}\cost_\eps(x,y)=2\eps \mathcal I_\eps(x,y)
$$
in the sense of distributions.
From Lemma~\ref{lem:monotone-fisher} $\mathcal I_\eps$ is non-increasing in $\eps$, hence one can legitimately compute in the distributional sense
$$
\frac{\d^2}{\d\eps^2}\cost_\eps(x,y)=\frac{\d}{\d\eps}\left\{2\eps \mathcal I_\eps(x,y)\right\}
=2\mathcal I_\eps(x,y) + 2\eps\frac{\d}{\d\eps}\mathcal I_\eps(x,y)
\leq2\mathcal I_\eps(x,y)
$$
and the conclusion follows again by monotonicity for $\eps\geq \eps_0$.
The a.e. twice differentiability follows by Alexandrov's theorem.
\end{proof}

Relying on our previous auxiliary results and on Proposition \ref{prop:derivative}, we are finally in position of estimating the error $\cost_\eps(x,y) - \cost_0(x,y)$ with $o(\eps^2)$ precision.
We will also significantly refine Corollary \ref{c:gc} by proving that any accumulation point of any sequence of minimizers is not only optimal for the unperturbed problem $\cost_0(x,y)$, but also $\mathcal{I}$-minimizing among all competitors in $\Lambda_0(x,y)$.

\begin{theo}\label{thm:taylor}
With the same assumptions and notations as in Proposition \ref{prop:derivative}, if there exists $\omega^0 \in \Lambda_0(x,y)$ such that $\mathcal{I}(\omega^0) < \infty$, then the map $\eps \mapsto \cost_\eps(x,y)$ is right differentiable also at $\eps=0$ with
\[
\frac{\d^+}{\d\eps}\cost_\eps(x,y)\Big|_{\eps=0} = 0,
\]
the right derivative is right continuous for any $\eps \geq 0$, and
\begin{equation}\label{eq:taylor}
\cost_\eps(x,y) - \cost_0(x,y) = \eps^2\inf_{\Lambda_0(x,y)}\mathcal{I} + o(\eps^2).
\end{equation}
Moreover, for any $\eps_n \downarrow 0$ and any minimizer $\omega^n \in \Lambda_{\eps_n}(x,y)$ there exists $\omega^* \in \Lambda_0(x,y)$ such that (up to a subsequence) $\omega^n \to \omega^*$ for the pointwise-in-time $\sigma$-topology, and $\omega^*$ has minimal Fisher information in $\Lambda_0(x,y)$
\[
\mathcal{I}(\omega^*) = \min_{\Lambda_0(x,y)}\mathcal{I}.
\]
\end{theo}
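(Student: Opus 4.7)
The plan is to sandwich $\cost_\eps(x,y) - \cost_0(x,y)$ between two matching bounds of order $\eps^2$. For the upper bound, any $\omega^0 \in \Lambda_0(x,y)$ with $\mathcal{I}(\omega^0) < \infty$ (which exists by assumption) is admissible for the $\eps$-Schr\"odinger problem, so $\cost_\eps(x,y) \leq \mathcal{A}_\eps(\omega^0) = \cost_0(x,y) + \eps^2 \mathcal{I}(\omega^0)$; taking the infimum over such $\omega^0$ and dividing by $\eps$ immediately delivers right differentiability of $\eps \mapsto \cost_\eps(x,y)$ at $\eps=0$ with derivative $0$, along with the one-sided estimate
\[
0 \leq \cost_\eps(x,y) - \cost_0(x,y) \leq \eps^2 \inf_{\Lambda_0(x,y)}\mathcal{I}.
\]
For the matching lower bound I rely on the trivial observation that $\mathcal{A}(\omega^\eps) \geq \cost_0(x,y)$ for any $\omega^\eps \in \Lambda_\eps(x,y)$ (being a particular admissible curve from $x$ to $y$), whence $\cost_\eps(x,y) - \cost_0(x,y) \geq \eps^2 \mathcal{I}(\omega^\eps)$. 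Combined with the upper bound, this already forces the uniform bound $\mathcal{I}(\omega^\eps) \leq \inf_{\Lambda_0(x,y)}\mathcal{I}$ and reduces the whole Taylor expansion to showing $\mathcal{I}(\omega^\eps) \to \inf_{\Lambda_0(x,y)}\mathcal{I}$ as $\eps \downarrow 0$.

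I would identify this limit (and simultaneously establish the fourth claim of the theorem) via a subsequence-of-any-subsequence argument. Given $\eps_n \downarrow 0$ and $\omega^n \in \Lambda_{\eps_n}(x,y)$, Corollary \ref{c:gc} supplies, up to a subsequence, pointwise-in-time $\sigma$-convergence to some $\omega^* \in \Lambda_0(x,y)$. Lower semicontinuity of $\mathcal{I}$ w.r.t.\ this topology (see the proof of Proposition \ref{prop:schro_solvable_iff_finite_entropy}) together with the a priori bound $\mathcal{I}(\omega^n) \leq \inf_{\Lambda_0(x,y)}\mathcal{I}$ gives
\[
\mathcal{I}(\omega^*) \leq \liminf_{n \to \infty}\mathcal{I}(\omega^n) \leq \inf_{\Lambda_0(x,y)}\mathcal{I};
\]
since $\omega^* \in \Lambda_0(x,y)$, this forces $\mathcal{I}(\omega^*) = \inf_{\Lambda_0(x,y)}\mathcal{I}$ (hence the infimum is attained) and $\lim_n \mathcal{I}(\omega^n) = \inf_{\Lambda_0(x,y)}\mathcal{I}$. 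Uniqueness of the limit across all subsequences then yields $\lim_{\eps \downarrow 0}\mathcal{I}(\omega^\eps) = \inf_{\Lambda_0(x,y)}\mathcal{I}$ for \emph{any} choice of $\omega^\eps \in \Lambda_\eps(x,y)$, and plugging this into the lower bound closes the Taylor expansion.

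Right continuity of $\eps \mapsto \frac{\d^+}{\d\eps}\cost_\eps(x,y)$ at $\eps=0$ then follows from the representation $\frac{\d^+}{\d\eps}\cost_\eps(x,y) = 2\eps\min_{\Lambda_\eps(x,y)}\mathcal{I}$ of Proposition \ref{prop:derivative}, combined with the uniform bound $\min_{\Lambda_\eps(x,y)}\mathcal{I} \leq \inf_{\Lambda_0(x,y)}\mathcal{I} < \infty$ established above, which forces $2\eps\min_{\Lambda_\eps(x,y)}\mathcal{I} \to 0$. The main (mild) obstacle throughout is handling the possible non-uniqueness of Schr\"odinger minimizers at each $\eps > 0$; this is precisely what forces the subsequence-of-any-subsequence step in identifying the limiting Fisher information, but does not otherwise affect the quantitative bounds.
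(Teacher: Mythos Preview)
Your proof is correct and in fact takes a more direct route than the paper's. The key difference is in how the Taylor expansion \eqref{eq:taylor} is obtained. The paper first establishes the $AC_{loc}([0,\infty))$ regularity of $\eps \mapsto \cost_\eps(x,y)$ (Proposition~\ref{prop:derivative}), then writes
\[
\cost_\eps(x,y) - \cost_0(x,y) = 2\int_0^\eps s\,\inf_{\Lambda_s(x,y)}\mathcal{I}\,\d s
\]
via the fundamental theorem of calculus, and finally sandwiches the integrand using the monotonicity of $s\mapsto\inf_{\Lambda_s}\mathcal{I}$ (Lemma~\ref{lem:monotone-fisher}) together with its right continuity at $s=0$. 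You bypass the integration entirely: your two elementary inequalities
\[
\eps^2\,\mathcal{I}(\omega^\eps) \leq \cost_\eps(x,y) - \cost_0(x,y) \leq \eps^2\inf_{\Lambda_0(x,y)}\mathcal{I}
\]
already contain the full sandwich, and in particular yield $\mathcal{I}(\omega^\eps)\leq \inf_{\Lambda_0}\mathcal{I}$ for \emph{every} $\omega^\eps\in\Lambda_\eps$ without appealing to Lemma~\ref{lem:monotone-fisher}. The compactness/lower-semicontinuity step identifying $\lim_{\eps\downarrow 0}\mathcal{I}(\omega^\eps)=\inf_{\Lambda_0}\mathcal{I}$ is essentially the same in both arguments. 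What your approach buys is economy: you never need the integral representation of the cost or the a.e.\ derivative formula, only the two trivial competitor bounds. What the paper's approach buys is a slightly more informative picture, since it exhibits $\cost_\eps-\cost_0$ as an integral of $2s\inf_{\Lambda_s}\mathcal{I}$, which in principle could be pushed to higher-order expansions if more were known about the regularity of $s\mapsto\inf_{\Lambda_s}\mathcal{I}$.
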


\begin{proof}
The right differentiability of $\eps \mapsto \cost_\eps(x,y)$ at $\eps=0$ follows by the same argument carried out in Proposition \ref{prop:derivative}. Indeed, given $\omega^0$ as in the statement, by \eqref{eq:difference-quotient} with $\eps=0$ it holds
\[
\limsup_{\delta \downarrow 0}\frac{\cost_\delta(x,y) - \cost_0(x,y)}{\delta} \leq \limsup_{\delta \downarrow 0} \delta\mathcal{I}(\omega^0) = 0.
\]
The liminf inequality is straightforward, since $\mathcal{I} \geq 0$ and thus by \eqref{eq:difference-quotient2} with $\eps=0$
\[
\liminf_{\delta \downarrow 0}\frac{\cost_\delta(x,y) - \cost_0(x,y)}{\delta} \geq \liminf_{\delta \downarrow 0} \delta\mathcal{I}(\omega^\delta) \geq 0
\]
for any $\omega^\delta \in \Lambda_\delta(x,y)$.
This also shows that the right derivative vanishes at $\eps=0$. 

As regards the right continuity of the right derivative, the case $\eps>0$ has already been discussed in Proposition \ref{prop:derivative}.
For $\eps=0$ the same strategy still works, with the only minor difference that we cannot rely on Lemma \ref{lem:continuity} anymore.
Nonetheless, if $\omega^n \in \Lambda_{\eps_n}(x,y)$ is as in Proposition \ref{prop:derivative}, $\overline{\omega} \in \Lambda_0(x,y)$ and $\omega^n \to \overline{\omega}$ for the pointwise-in-time $\sigma$-topology (the existence of such $\overline{\omega}$ is granted by Corollary \ref{c:gc}) it is still true that
\[
\liminf_{n \to \infty}\mathcal{I}(\omega^n) \geq \mathcal{I}(\overline{\omega}),
\]
simply by lower semicontinuity of $\mathcal{I}$.
With this single change in the proof we deduce that $\eps \mapsto \inf_{\Lambda_\eps(x,y)}\mathcal{I}$ is right continuous and finite also at $\eps=0$, thanks to the present assumptions, and so is the right derivative of the cost due to $\eps\inf_{\Lambda_\eps(x,y)}\mathcal{I} \to 0$ as $\eps \downarrow 0$.

The last part of the statement is a slight modification of these lines of thought. Indeed, given any sequence $\eps_n \downarrow 0$ and $\omega^n \in \Lambda_{\eps_n}(x,y)$, the existence of $\omega^* \in \Lambda_0(x,y)$ such that, up to subsequences, $\omega^n \to \omega^*$ is ensured by Corollary \ref{c:gc}.
The fact that $\omega^*$ has minimal Fisher information among all elements in $\Lambda_0(x,y)$ follows from
\[
\inf_{\Lambda_0(x,y)}\mathcal{I} \geq \limsup_{n \to \infty}\sup_{\Lambda_{\eps_n}(x,y)}\mathcal{I} \geq \limsup_{n \to \infty}\mathcal{I}(\omega^n) \geq \liminf_{n \to \infty}\mathcal{I}(\omega^n) \geq \mathcal{I}(\omega^*) \geq \inf_{\Lambda_0(x,y)}\mathcal{I},
\]
where we used once again Lemma \ref{lem:monotone-fisher} and the lower semicontinuity of $\mathcal{I}$.

Thus, it only remains to establish \eqref{eq:taylor}.
As $\eps \mapsto \cost_\eps(x,y)$ belongs to $AC_{loc}([0,\infty))$ and the right derivative coincides a.e.\ with the full derivative, \eqref{eq:derivative} and the fundamental theorem of calculus yield
\begin{equation}\label{lastissue}
\cost_\eps(x,y) - \cost_0(x,y) 
= 2\int_0^\eps s\inf_{\Lambda_s(x,y)}\mathcal{I}\,\d s 
\leq 2\int_0^\eps s\inf_{\Lambda_0(x,y)}\mathcal{I}\,\d s
= \eps^2 \inf_{\Lambda_0(x,y)}\mathcal{I}.
\end{equation}
Here we used the monotonicity of the Fisher information from Lemma~\ref{lem:monotone-fisher} in the middle inequality.
By the same monotonicity and the right continuity at $\eps=0$ of $\eps \mapsto \inf_{\Lambda_\eps(x,y)}\mathcal{I}$ we also deduce that
\[
\begin{split}
\cost_\eps(x,y) - \cost_0(x,y) & \geq 2\int_0^\eps s\inf_{\Lambda_\eps(x,y)}\mathcal{I}\,\d s = \eps^2 \inf_{\Lambda_\eps(x,y)}\mathcal{I} \\
& = \eps^2 \inf_{\Lambda_0(x,y)}\mathcal{I} + \eps^2 \Big(\inf_{\Lambda_\eps(x,y)}\mathcal{I} - \inf_{\Lambda_0(x,y)}\mathcal{I}\Big) \\
& = \eps^2 \inf_{\Lambda_0(x,y)}\mathcal{I} + o(\eps^2)
\end{split}.
\]
Combining this lower bound with the previous upper one entails \eqref{eq:taylor}.
\end{proof}

\begin{rmk}{\rm
It is worth stressing that the upper bound \eqref{lastissue} on $\cost_\eps(x,y) - \cost_0(x,y)$ is not asymptotic, but pointwise, in the sense that it holds for all $\eps>0$. Since in addition $\cost_\eps(x,y) - \cost_0(x,y)$ is always non-negative, we can rewrite \eqref{eq:taylor} in the following quantitative way: \begin{equation}\label{lastissue1}
	|\cost_\eps(x,y) - \cost_0(x,y) |
	\leq \eps^2 \inf_{\Lambda_0(x,y)}\mathcal{I}.
\end{equation}
A possible way to further improve \eqref{eq:taylor} would rely on a refined analysis of $\eps \mapsto \inf_{\Lambda_\eps(x,y)}\mathcal{I}$, its derivative (which exists a.e.\ by monotonicity), and possibly absolute continuity.
}\fr 
\end{rmk}

\begin{rmk}{\rm
The $\mathcal{I}$-minimizing property of the accumulation point $\omega^*$ is not specific of the choice $\eps=0$, but of the particular ``backward'' direction of the sequence $\eps_n\downarrow$.
Repeating the argument in the proof of Theorem \ref{thm:taylor} it is indeed not difficult to check that, given any $\eps>0$, a sequence $\eps_n \downarrow \eps$, and $\omega^n \in \Lambda_{\eps_n}(x,y)$ there exists $\omega^\eps \in \Lambda_\eps(x,y)$ such that, up to a subsequence, $\omega^n \to \omega^\eps$ for the pointwise-in-time $\sigma$-topology and
\[
\mathcal{I}(\omega^\eps) = \inf_{\Lambda_\eps(x,y)}\mathcal{I}.
\]
In a symmetric fashion, a closer look into the proof of Proposition \ref{prop:derivative} suggests that an opposite behaviour appears in the ``forward'' direction.
More precisely, if $\eps_n \uparrow \eps$ instead of $\eps_n \downarrow \eps$, then any accumulation point $\omega^\eps$ of $(\omega^n)$ is such that
\[
\mathcal{I}(\omega^\eps) = \sup_{\Lambda_\eps(x,y)}\mathcal{I}.
\]
However, the ``backward'' direction and the case $\eps=0$ are usually more interesting, because of the connection with the unperturbed problem $\cost_0(x,y)$ for which there might be multiple solutions even if the Schr\"odinger problem \eqref{eq:schrodinger_pb} has a unique minimizer for all $\eps>0$.
It is therefore natural to look for the (properties of the) solutions selected via Schr\"odinger regularization.
}\fr 
\end{rmk}

\begin{rmk}
{\rm 
In the present work, the metric Schr\"odinger problem has been essentially studied in the $\eps \downarrow 0$ regime, where it connects to the geodesic problem. In a forthcoming paper, the second author \cite{Tamanini+} will investigate the problem at $\eps$ fixed and in the long-time regime $\eps \to \infty$, with focus on ergodicity and energy estimates.
}\fr 
\end{rmk}


\section{Examples}
\label{s: exam}
In this section we collect several and heterogeneous situations where our abstract approach (in particular Theorem \ref{t:gconv1}, Corollary \ref{c:gc}, and Theorem \ref{thm:taylor}) applies.
We shall also comment the novelty of the results thus obtained in comparison with the existing literature.
In this perspective, it is worth discussing first in more detail the role played by Assumption \ref{hyp1} so far, singling out when the sequential lower semicontinuity of $|\partial\V|$ w.r.t.\ $\sigma$ is needed and when it is not:
\begin{itemize}
\item 
to prove existence of a solution to the Schr\"odinger problem \eqref{eq:schrodinger_pb} (cf.\ Proposition \ref{prop:schro_solvable_iff_finite_entropy}) it is crucial, in order to apply the direct method of the calculus of variations;
\item 
in Theorem \ref{t:gconv1} and Corollary \ref{c:gc} it is not used;
\item
unlike Corollary \ref{c:gc}, in Proposition \ref{prop:extended_gamma_convergence} it is needed for the $\Gamma$-liminf inequality and so is in Lemma \ref{lem:continuity};
\item
Proposition \ref{prop:derivative} relies on Proposition \ref{prop:extended_gamma_convergence} and Lemma \ref{lem:continuity}, hence it is implicitly used;
\item in Theorem \ref{thm:taylor} the continuity of $\eps \mapsto \inf_{\Lambda_\eps(x,y)}\mathcal{I}$ at $\eps=0$ requires the lower semicontinuity of $|\partial\V|$ and also Proposition \ref{prop:derivative} is used in the proof of \eqref{eq:taylor}; hence the lower semicontinuity of $|\partial\V|$ is really needed.
\end{itemize}
This means that if one is able to show the solvability of the Schr\"odinger problem \eqref{eq:schrodinger_pb} by means other than those used in Proposition \ref{prop:schro_solvable_iff_finite_entropy}, then Theorem \ref{t:gconv1} and Corollary \ref{c:gc} are still valid under the following weaker hypothesis.

\begin{hyp} \label{hyp2}
There exists a Hausdorff topology $\sigma$ on $\X$ such that $\sfd$-bounded sequences contain $\sigma$-converging subsequences. Moreover, the distance $\sfd$ is sequentially lower semicontinuous w.r.t.\ $\sigma$. 
\end{hyp}
\noindent
Theorem \ref{thm:taylor}, instead, requires the full validity of Assumption \ref{hyp1}.
\\

Let us now discuss some specific instances where our results apply.

\subsection{Hadamard spaces}

As a first example, we consider as an entropy functional the squared distance (to an arbitrary point), whose convexity is ensured by a non-positive sectional curvature assumption.

\subsubsection*{Setting}

\begin{itemize}\item Let $(\X,\sfd)$ be a complete and separable $\CAT(0)$ space (i.e.\ a separable Hadamard space) that satisfies a (rather mild) geometric $\overline{Q_4}$ condition \cite{kirk14}.
\item 
Let $\V(\cdot):=\frac 1 2 \sfd^2(x_0,\cdot)$, where $x_0\in \X$ is a fixed point.
\item
We employ the \emph{half-space topology} \cite{kakavandi2013} as $\sigma$.
The corresponding convergence is known as the $\Delta$-convergence \cite{lim76}. If $\X$ is locally compact, it coincides with the strong metric convergence. \end{itemize}

\subsubsection*{Applicability of our results}
\begin{itemize}\item
The example fits into Setting \ref{setting}. Indeed, by a basic property of $\CAT(0)$ spaces, $\V$ is a (continuous) $1$-convex functional; consequently, \cite[Theorem 3.14]{MuratoriSavare20} provides existence of an $\EVI_1$-gradient flow of $\V$ starting from any $x\in\X$. Note that $\V$ is always finite. 

\item Assumption \ref{hyp1} is also true. 
Indeed, $\sfd$-bounded sequences contain $\Delta$-converging subsequences \cite{lim76, kakavandi2013}. 
Bounded closed convex sets (in particular, balls) are $\Delta$-closed \cite{kirk14}, which easily implies that $\sfd$ is $\Delta$-lower semicontinuous.
Moreover, it is easy to see from \eqref{eq:local=global} that $|\partial\V(x)|= \sfd(x_0,x)$, thus the slope is $\Delta$-lower semicontinuous too. 
\end{itemize}

Hence, in this framework, all our results are applicable.

\subsection{The Boltzmann-Shannon relative entropy}\label{subsec:RCD}

Let us consider now the Boltzmann-Shannon relative entropy on the Wasserstein space built over a (locally compact) $\RCD$ space:

\subsubsection*{Setting}

\begin{itemize} 
\item
Let $M$ be a complete, separable, and locally compact length space endowed with a Radon measure $\m$, and assume that it is an $\RCD(K,\infty)$ space \cite{AmbrosioGigliSavare11-2} for some $K\in\R$.
We consider the $2$-Wasserstein space $\X := \mathcal P_2(M)$ over $M$, namely the space of probability measures with finite second moments, equipped with $\sfd:=W_2$, the $2$-Wasserstein distance.

\item
The Boltzmann-Shannon relative entropy $\V$ on $\X$ is defined as
\[
\V(\mu) := \left\{\begin{array}{ll}
\displaystyle{\int_M \rho\log(\rho)\,\d\m} & \qquad \text{ if } \mu = \rho\m,\\
+\infty & \qquad \text{ if } \mu \not\ll {\m}.
\end{array}\right.
\]
\item 
For the $\sigma$ topology we choose the metric topology of $(\X, W_p)$, where $W_p$ is the $p$-Wasserstein istance, $1\leq p<2$ \footnote{$(\X,W_2)$ is not locally compact unless $M$ is compact, so that in general the metric topology of $(\X,W_2)$ is not an admissible candidate for $\sigma$.}
\end{itemize}

\noindent With this choice of $(\X,\sfd)$ and $\V$ we recover the dynamical formulation of the ``classical'' Schr\"odinger problem \cite{Leonard14}. Indeed, taking into account the equivalence between $W_2$-absolutely continuous curves and distributional solutions of the continuity equation (see \cite{GigliHan13}) and the fact that the slope $|\partial\V|^2$ coincides with the Fisher information \cite[Theorem 9.3]{AmbrosioGigliSavare11}, \eqref{eq:schrodinger_pb} reads as
\[
\inf \bigg\{ \frac{1}{2}\iint_0^1 |v_t|^2\rho_t\,\d t\d\m + \frac{\eps^2}{2}\iint_0^1 |\nabla\log\rho_t|^2\rho_t\,\d t\d\m \bigg\},
\]
where the infimum runs over all couples $(\mu_t,v_t)$, $\mu_t = \rho_t\m$, solving the continuity equation $$\partial_t\mu_t + {\rm div}(v_t\mu_t) = 0$$ with the constraint $\mu_0=\mu$ and $\mu_1=\nu$.

\subsubsection*{Applicability of our results}

In order to see that our abstract metric results hold for this specific example, let us check separately the validity of Setting \ref{setting} and Assumption \ref{hyp2}.

\begin{itemize}  
\item Setting \ref{setting} fully holds. Indeed, $(\X,\sfd)$ is a complete and separable metric space \cite{Bolley8}. Moreover, by \cite[Theorem 4.24]{Sturm06I} there exist $C>0$, $x \in M$ such that $\int_M e^{-C\sfd^2(\cdot,x)}\d\m < \infty$. Consequently, $\V$ can be equivalently rewritten as
\[
\V(\mu) = \underbrace{\int_M \tilde{\rho}\log(\tilde{\rho})\,\d\tilde{\m}}_{\geq 0} - C\int_M \sfd^2(\cdot,x)\,\d\mu - \log Z,
\]
where $\tilde{\rho}$ is the Radon-Nikodym derivative of $\mu$ w.r.t.\ $\tilde{\m}$, with the normalization
\[
Z := \int_M e^{-C\sfd^2(\cdot,x)}\d\m, \qquad \tilde{\m} := \frac 1Z e^{-C\sfd^2(\cdot,x)}\m.
\]
From this very definition, it is easy to see that $\V$ is a proper lower semicontinuous functional, bounded from below on $W_2$-bounded sets. Finally, by (one of the equivalent) definition of $\RCD$ spaces, cf.\ \cite[Theorem 5.1]{AmbrosioGigliSavare11-2}, for any $\mu \in \X$ there exists an $\EVI_K$-gradient flow of $\V$ starting from it (in particular, this implies that $\V$ has a dense domain). 

\item Assumption \ref{hyp2} also holds. Indeed, $W_2$-bounded sequences in $\X$ are uniformly tight (the second moments are uniformly bounded and the balls in $M$ are relatively compact, so that the claim follows from \cite[Remark 5.1.5]{AmbrosioGigliSavare08}) and thus relatively compact w.r.t.\ the narrow topology. Passing to a subsequence, we may assume that such a sequence is narrowly convergent. Leveraging on its tightness and applying the H\"older inequality, it is easy to deduce that the $p$-th moments w.r.t. any fixed reference point, $p<2$, converge to the corresponding limiting $p$-th moment. Hence, this sequence is $\sigma$-converging. Moreover, $W_2$ is lower semicontinuous w.r.t.\ narrow convergence of measures \cite[Proposition 3.5]{AmbrosioGigli11} and thus w.r.t. $W_p$-convergence, $p<2$. 
\end{itemize}

Therefore, given any $\mu,\nu \in \X$ for which the dynamical Schr\"odinger problem \eqref{eq:schrodinger_pb} is solvable, the $\Gamma$-convergence results of Section \ref{sec:small_noise_convexity} are fully applicable. This is for instance the case if $\mu,\nu \ll \m$ have bounded densities and supports (in \cite{GigTam18, GigTam20} this is proved for $\RCD^*(K,N)$ spaces, $N<\infty$, but the argument can be adapted to locally compact $\RCD(K,\infty)$ spaces thanks to the existence of ``good'' cut-off functions \cite{Mondino-Naber14}).

The more demanding Assumption \ref{hyp1} is satisfied for example if
\begin{itemize}
\item 
either $M$ is a convex domain in $\R^d$; see \cite[Lemma 2.4]{GST09} for a proof of the narrow (and hence $W_p$-) lower semicontinuity of $|\partial\V|$).
\item
or under the assumption that $M$ is compact (e.g.\ the torus, the sphere or any convex closed bounded subset of a smooth weighted Riemannian manifold). In this case we can even choose the topology $\sigma$ to be the strong one induced by $W_2$, and $|\partial\V|$ is lower semicontinuous by Remark~\ref{rmk18}.
\end{itemize}
In these two situations all our abstract results are applicable.

\subsubsection*{Novelty and related literature} 

A thorough study of the ``classical'' Schr\"odinger problem and its equivalent formulations (at the static, dual, and dynamical levels) has been carried out by the second author in \cite{GigTam20}, but in the more restrictive framework of $\RCD^*(K,N)$ spaces, and only for $\eps$ fixed. The behaviour of the (unique) minimizers as $\eps \downarrow 0$ was instead studied in \cite[Proposition 5.1]{GigTam18}, again only in $\RCD^*(K,N)$ spaces, but the $\Gamma$-convergence of the corresponding variational problems was not investigated.  Hence Theorem \ref{t:gconv1} and Corollary \ref{c:gc} are new in the $\RCD$ framework.

As regards the validity of the results of Section \ref{sec:derivative_cost} in the two situations described above, this partly extends the recent work \cite{ConTam19}, where an analogue of Theorem \ref{thm:taylor} is proved in the Riemannian setting.

\subsection{Internal energies and the R\'enyi entropy}\label{subsec:renyi}

As a next class of examples, we consider generalized entropy functionals (usually called \emph{internal energies}) on the Wasserstein space built over an $\RCD^*(0,N)$ space, $N < \infty$.
The setting is therefore the following.

\subsubsection*{Setting}

\begin{itemize} 
\item 
Let $(\X,\sfd) := (\mathcal P_2(M), W_2)$ be the $2$-Wasserstein space over $M$. 
The underlying space $M$ is assumed to be an $\RCD^*(0,N)$ space\footnote{The notion of an $\RCD^*(K,N)$ space was introduced in \cite{Gigli12};
for comparison between $\RCD$ and $\RCD^*$ conditions see \cite{BacherSturm10} and \cite{CavMil16}, in particular, these notions coincide when $\m(M)<\infty$ and are expected to always coincide.} with reference measure $\m$, hence in particular $M$ is complete and separable.
\item
The (generalized) entropy/internal energy $\V$ on $\X$ is defined as
\begin{equation}
\label{eq:internal_energy_U}
\V(\mu) := \int_M U(\rho)\,\d\m + U'(\infty)\mu^\perp(M), \qquad \textrm{if } \mu = \rho\m + \mu^\perp, \,\mu^\perp \perp \m
\end{equation}
where $U'(\infty) := \lim_{r \to \infty} U'(r)$.
The function $U : [0,\infty) \to \R$ is assumed continuous and convex, with $U(0)=0$ and $U'$ locally Lipschitz in $(0,\infty)$ satisfying McCann's \cite{McC97} condition\footnote{This means that the corresponding pressure function $P(r) := rU'(r) - U(r)$ is such that $P(0) := \lim_{r \downarrow 0}P(r) = 0$ and $r \mapsto r^{-1+1/N'}P(r)$ is non-decreasing or, equivalently, $r \mapsto r^{N'}U(r^{-N'})$  is convex and non-increasing on $(0,+\infty)$.} for some $N' \in [N,+\infty)$.
\item 
The topology $\sigma$ will be again the metric topology of $(\X, W_p)$, $1\leq p<2$.
\end{itemize}

Note that in the case $U$ is chosen equal to
\[
U_{N'}(r) := -N'(r^{1-1/N'}-r), \,\, N' \geq N \qquad \textrm{or} \qquad U_m(r) := \frac{1}{m-1}r^m, \,\, m \geq 1-\frac{1}{N}
\]
($U_{N'}$ being more linked to Lott-Sturm-Villani theory of curvature-dimension bounds, $U_m$ with the porous medium equation of power $m$), the famous R\'enyi entropy is recovered. Detailed discussions of the internal energies associated to non-linear diffusion semigroups and evolution variational inequalities in connection with curvature-dimension conditions can be found in \cite{AmbrosioMondinoSavare13} and in \cite[Chapters 16 and 17]{Villani09}.

\subsubsection*{Applicability of our results}

Let us verify that all the conditions in Setting \ref{setting} and Assumption \ref{hyp2} hold.

\begin{itemize}
\item We are within Setting \ref{setting}. Indeed, by the discussion carried out in the previous section and by the fact that $\RCD^*(K,N)$ spaces are in particular locally compact $\RCD(K,\infty)$ spaces, $\X$ is a complete and separable metric space and $\sigma$ is an admissible topology. Moreover, since $U(0)=0$, $M$ is locally compact and $U$ is continuous, it is clear that $\V$ is well defined and finite on all probability measures with bounded support, so that $\V$ is proper and has a dense domain in $\X$. Actually, $D(\V)$ is dense in energy in $\X$, i.e.\ for all $\mu \in \X$ there exist $\mu_n \in D(\V)$ with $W_2(\mu_n,\mu) \to 0$ and $\V(\mu_n) \to \V(\mu)$ as $n \to \infty$. By the properties of $U$ it is also easy to see that $\V$ is lower semicontinuous \cite[Theorem 30.6]{Villani09} and bounded from below on $W_2$-bounded sets. Finally, from \cite[Theorem 9.21]{AmbrosioMondinoSavare13} with $K=0$ (since $M$ is assumed to be an $\RCD^*(0,N)$ space) and the fact that $D(\V)$ is dense in energy in $\X$, we see that for all $\mu \in \X$ there exists an $\EVI_0$-gradient flow of $\V$ starting from it. 
\item Assumption \ref{hyp2} holds by what we said in Section \ref{subsec:RCD}. 
\end{itemize}

Hence, whenever the dynamical Schr\"odinger problem \eqref{eq:schrodinger_pb} is solvable, the $\Gamma$-convergence results of Section \ref{sec:small_noise_convexity} can be applied. 

As for Assumption \ref{hyp1}, there are at least two cases of interest when its full validity can be verified:
\begin{itemize}
\item if $M=\R^d$ and $U$ is superlinear at $\infty$ (which is the case for $U_m$ defined just below with $m>1$), then by \cite[Theorem 10.4.6]{AmbrosioGigliSavare08} the slope of $\V$ can be represented as
\[
|\partial\V|^2(\mu) = \int_{\R^d} |\nabla U'(\rho)|^2\,\d\mu, \qquad \textrm{if } \mu = \rho\mathcal{L}^d,
\]
and by \cite[Proposition 2.2]{GST09} it is sequentially lower semicontnuous w.r.t.\ narrow and thus $W_p$-topology (so the latter can be used as $\sigma$).
\item if $M$ is compact, by Remark \ref{rmk18} we see that the $W_2$-topology is an admissible candidate for $\sigma$. 
\end{itemize}

We conclude that all our results are applicable in the two situations that we have just described.

\subsubsection*{Novelty and related literature}  

To the best of our knowledge, up to now the dynamical Schr\"odinger problem \eqref{eq:schrodinger_pb} with the slope of a general internal energy in place of the slope of the Boltzmann entropy has been considered only in \cite{GLR18} from a purely formal point of view. Static Monge-Kantorovich problems regularized by means of the R\'enyi entropy or more general internal energies have recently been introduced in \cite{ES18,LMM19,LoM20,MG20} (see also the references therein). Remarkably, \cite{LoM20} establishes the $\Gamma$-convergence of the regularized problems towards the optimal transport one (cf.\ \cite{MG20} where the convergence of the optimal values and minimizers is discussed). However, in \cite{MG20} only bounded costs are considered (the quadratic cost function associated to \eqref{BB} is thus ruled out for non-compact sample spaces), while in \cite{LoM20} the discussion is restricted to sample spaces which are compact subset of $\R^d$. Other questions our paper is concerned with have not been examined in these references. Note also that the issue of the equivalence between static and dynamical formulations is far from being clear at this level of generality. In view of this discussion, in all the applicability situations presented in this section our results are new. 

The case of a (possibly) negatively curved base space $M$ is not discussed since, as already argued above, \cite[Theorem 9.21]{AmbrosioMondinoSavare13} allows to deduce \eqref{eq:EVIlambda} with $\lambda = 0$ only for $K \geq 0$. Moreover, it has recently been proved \cite[Theorem 2.5 and Remark 2.6]{DPMO19} that in the hyperbolic space the porous medium equation cannot be seen as the Wasserstein gradient flow of some $\lambda$-convex functional in the $\EVI$-sense, hence the R\'enyi entropy cannot generate an $\EVI_\lambda$-gradient flow there. 

\subsection{Mean-field Schr\"odinger problem}

In the seminal thought experiment proposed by Schr\"odinger \cite{Schrodinger31, Schrodinger32} the physical system, whose evolution between two subsequent observations has to be determined, consists of independent Brownian particles.
An important generalization has been recently proposed in \cite{BCGL20}, where particles are allowed to interact through a pair potential $W$.
This leads to the so-called Mean Field Schr\"odinger Problem (MFSP henceforth), which can be cast as a metric Schr\"odinger problem by choosing the following setting.

\subsubsection*{Setting}
\begin{itemize} 
\item Let $\X := \mathcal P_2(\R^d)$ be the 2-Wasserstein space over $\R^d$, equipped with the $2$-Wasserstein distance $\sfd:=W_2$. 
\item The role played by the Boltzmann-Shannon relative entropy in the ``classical'' Schr\"odinger problem is here taken by the functional $\V : \X \to \R$ defined (up to a shift by a constant) by
\[
\V(\mu) := \left\{\begin{array}{ll}
\displaystyle{H(\mu\,|\,\mathcal{L}^d) + \int_{\R^d}W * \rho\,\d\mu} & \qquad \text{ if } \mu = \rho\mathcal{L}^d\\
+\infty & \qquad \text{ if } \mu \not\ll \mathcal{L}^d
\end{array}\right.
\]
where $H(\mu\,|\,\mathcal{L}^d)$ is the Boltzmann-Shannon relative entropy of $\mu$ w.r.t.\ the Lebesgue measure $\mathcal{L}^d$, already introduced in Section \ref{subsec:RCD}, and $W$ is the pair potential, describing via convolution the interaction between the particles of the system. On such a potential the following assumptions are made: it is of class $C^2(\R^d,\R)$, is symmetric, i.e.\ $W(x) = W(-x)$ for all $x \in \R^d$, and satisfies the two-sided bound
\[
\Lambda\mathrm{Id}~\geq  \nabla^2 W \geq \lambda\mathrm{Id}
\]
for some $\Lambda, \lambda > 0$ (actually $\lambda\in\R$ is enough, but in \cite{BCGL20} the authors are interested in the ergodic behaviour of MFSP). While the upper bound is technical, the lower one is geometric and crucial.
\item As topology $\sigma$ we shall use the metric topology of $(\X, W_p)$, $1\leq p<2$.
\end{itemize}

Indeed, the fact that the MFSP coincides with \eqref{eq:schrodinger_pb} with the above choice of $(\X,\sfd)$ and $\V$ follows from \cite[Theorem 1.2]{BCGL20} and the fact that the slope of $\V$ is explicitly given by
\[
|\partial\V|^2(\mu) = \left\{\begin{array}{ll}
\displaystyle{\int_{\R^d}|\nabla\log\rho + 2\nabla W * \rho|^2\,\d\mu} & \qquad \text{ if } \mu = \rho\mathcal{L}^d,\,\nabla\log\rho \in L^2_\mu, \\
+\infty & \qquad \text{ otherwise},
\end{array}\right.
\]
cf.\ \cite[Section 1.4.2]{BCGL20} and \cite[Theorem 10.4.13]{AmbrosioGigliSavare08}.

\subsubsection*{Applicability of our results}

In order to see that our abstract metric results hold for this specific example, let us check separately the validity of Setting \ref{setting} and Assumption \ref{hyp1}.
\begin{itemize}
\item Setting \ref{setting} fully holds. As already said in Section \ref{subsec:RCD}, $\X := \mathcal P_2(\R^d)$ is a complete and separable metric space. The lower semicontinuity of $\V$ is easily seen to hold: the relative entropy has already been discussed, whereas the continuity of the convolution term follows from the fact that if $\mu_n \to \mu$ in $\mathcal P_2(\R^d)$, then $\mu_n \otimes \mu_n \to \mu \otimes \mu$ in $\mathcal P_2(\R^{2d})$, cf.\ \cite[Example 9.3.4]{AmbrosioGigliSavare08}.
The fact that $\V$ is proper and the density of its domain are also clear.
Moreover, the assumptions on $W$ guarantee that $\V$ is bounded from below on $W_2$-bounded sets.
As concerns the existence of $\EVI_\lambda$-gradient flows starting from any $\mu \in \X$, this is ensured by \cite[Theorem 11.2.1]{AmbrosioGigliSavare08} in conjunction with \cite[Remark 9.2.5 and Proposition 9.3.5]{AmbrosioGigliSavare08}, granting the $\lambda$-convexity of $\V$ along \emph{generalized} geodesics (see \cite[Definitions 9.2.2 and 9.2.4]{AmbrosioGigliSavare08}).

\item Assumption \ref{hyp1} is fully satisfied as well. Indeed, by the discussion in Section \ref{subsec:RCD} we have the validity of Assumption \ref{hyp2}, so that it only remains to discuss the sequential lower semicontinuity of $|\partial\V|$ w.r.t.\ $W_p$, $1 \leq p < 2$. To this end, one can rely on the previous explicit expression for $|\partial\V|$, on \cite[Proposition 2.2]{GST09}, the fact that $\Delta W$ is continuous and bounded (as a consequence of the boundedness of $\nabla^2 W$) and the regularization properties of the convolution to show that $|\partial\V|$ is sequentially narrowly lower semicontinuous, so that a fortiori it is also sequentially $W_p$-lower semicontinuous. 
\end{itemize}

Hence all the results of Sections \ref{sec:small_noise_convexity} and \ref{sec:derivative_cost} are applicable.

\subsubsection*{Novelty and related literature}

From the novelty standpoint, a first interesting remark is the fact that in \cite{BCGL20} the approach is purely stochastic, while our point of view is completely analytic. For instance, in \cite[Proposition 1.1]{BCGL20} the existence of solutions to MFSP is proved under the same assumptions we have in Proposition \ref{prop:schro_solvable_iff_finite_entropy}, namely $\mu,\nu \in \X$ with $\V(\mu),\V(\nu) < \infty$.
However, already at this basic level the reader may appreciate the difference between the two approaches.

But more than anything else, our abstract results are completely new when specialized to MFSP: indeed, only the ergodic behaviour in the long time regime $\eps \to \infty$ is studied in \cite{BCGL20}, so that the $\Gamma$-convergence results of Section \ref{sec:small_noise_convexity} are entirely novel. The same is true for Section \ref{sec:derivative_cost}, since in \cite{ConTam19} the derivative of the cost associated to MFSP is not investigated nor is the Taylor expansion \eqref{eq:taylor}.

\subsection{Non-linear mobilities}

While in the previous examples the distance $\sfd$ was always the 2-Wasserstein distance, we now turn our attention to the so-called \emph{non-linear mobility} Wasserstein distance, first introduced in \cite{DNS09} as Benamou-Brenier-like generalization of the quadratic Wasserstein distance, and further studied in \cite{CLSS10}.
Making the discussion below completely rigorous would require a tedious and lenghty distinction between various possible structural assumptions on the mobility function $\mathsf m(\rho)$ (cases A and B in \cite{CLSS10}), hence for the sake of presentation we deliberately remain partially informal in this last example.

\subsubsection*{Setting}

\begin{itemize}
\item 
Given a \emph{non-linear mobility} function $\mathsf m:\R^+\to\R^+$ satisfying some structural conditions from \cite{DNS09,CLSS10}, and a convex, smooth, bounded Euclidean domain $\Omega$, the nonlinear Wasserstein distance $W_{\mathsf m}$ is defined on the space of probability measures $\P(\overline\Omega)$ as
\[
W_{\mathsf m}(\mu,\nu) := \inf\left\{\int_0^1\int_{\R^d}|v_t(x)|^2\mathsf m(\rho_t(x))\d x \d t\right\},
\]
where the infimum runs over all distributional solutions of the \emph{non-linear} continuity equation $$\partial_t\rho_t + {\rm div}(v_t\mathsf m(\rho_t))=0$$ with the constraints $\rho_0\mathcal{L}^d = \mu$, $\rho_1\mathcal{L}^d = \nu$ and $\supp(\rho_t) \subset \overline{\Omega}$. 
We consider the metric\footnote{
	Note that, depending on the structural assumptions on $\mathsf m$ and the particular measures $\mu,\nu$, it can happen that $W_{\mathsf m}(\mu,\nu)=+\infty$.
	In this case, instead of taking the whole $X=\P(\overline\Omega)$ as a \emph{pseudo}-metric space, one should rather work on the finite components $X:=\P[\nu]=\{\mu\in\P(\overline\Omega)\mbox{ s.t. }W_{\mathsf m}(\mu,\nu)<\infty$ for fixed $\nu$.
	This becomes indeed a complete and separable metric space \cite[Prop. 3.2]{CLSS10} but for simplicity we shall ignore this subtle issue.
} space $(\X,\sfd) := (\P(\overline{\Omega}),W_{\mathsf m})$.

\item
For the entropy functional $\V(\mu)$ we consider the internal energy $\int_\Omega U(\rho)\d\mathcal{L}^d$  defined in Section \ref{subsec:renyi} with $M=\Omega$ and $\m = \mathcal{L}^d|_{\Omega}$.
We assume that the function $U$ defining $\V$ is non-negative and satisfies the \emph{generalized McCann condition} $GMC(\mathsf m,d)$ from \cite[Definition 4.5]{CLSS10}.

\item 
The classical narrow convergence of measures plays the role of $\sigma$.
\end{itemize}

\subsubsection*{Applicability of our results}

Let us first discuss the validity of our main assumptions.
\begin{itemize}
\item
First of all, completeness and separability for our assumption \ref{item:complete_sep} are known from \cite{DNS09,CLSS10}.
Secondly, by \cite[Section 4.1]{CLSS10} $\V$ is narrowly lower semicontinuous, and since by \cite[Theorem 5.5]{DNS09} the $W_{\mathsf m}$-topology is stronger than the narrow one, the lower semicontinuity of $\V$ w.r.t.\ $W_{\mathsf m}$ follows. 
The non-negativity of $U$ implies that $\V$ is (globally) bounded from below.
The density of the domain $D(\V)$ is ensured by \cite[Corollary 4.11]{CLSS10}, hence
our condition \ref{item:V_locally_bounded_below} fully holds.
As regards our more fundamental assumption \ref{item:generate_lambda_flow}, the generation of an $\EVI_\lambda$-flow is exactly the purpose of \cite{CLSS10} for $\lambda=0$ under the generalized McCann condition.
As a consequence our Setting~\ref{setting} is fully applicable here.

%
%
\item
We come now to the more delicate Assumption \ref{hyp1}, whose validity would readily make all our results of Sections \ref{sec:small_noise_convexity} and \ref{sec:derivative_cost} rigorously applicable. 
First of all, the $\sigma$-compactness of bounded sets holds, simply because $\P(\overline{\Omega})$ is always narrowly compact.
The narrow lower semicontinuity of the distance $W_{\mathsf m}$ is known from \cite[Theorem 5.6]{DNS09}.
As for the lower semicontinuity of the slope  $|\partial\V|$, this is where our discussion becomes informal:
contrarily to the previous examples, and despite the Euclidean structure and the additional compactness of the domain, we are not aware of a rigorous explicit expression for the slope.
Yet from a purely formal point of view, and given the pseudo-Riemannian structure induced by \cite[Eq. {3.2}]{CLSS10}, this metric slope is clearly expected to be
\begin{equation}
\label{eq:slope-nonlinear}
|\partial\V|^2(\rho)
=\int_\Omega |\nabla U'(\rho)|^2 \mathsf  m(\rho)\,\d x
=\int_\Omega\frac{|\nabla P(\rho)|^2}{\mathsf m(\rho)}\,\d x,
\end{equation}
a generalized Fisher information.
Here the \emph{pressure} $P$ is defined as $P(r)=\int_0^r U''(s)\mathsf m(s)\,\d s$.
Under additional conditions on $U,\mathsf m$, one may be able to check by hand the narrow lower semicontinuity of \eqref{eq:slope-nonlinear}, thus validating our Assumption \ref{hyp1}.
The stringent computation of the metric slope $|\partial\V|(\rho)$ is actually listed as an open problem in \cite[Section 7]{CLSS10}, and this issue is the main obstacle to making the particular application of our abstract results to nonlinear mobilities completely rigorous.
\end{itemize} 

Regardless of the issue that Assumption \ref{hyp1} cannot be completely validated here, let us point out that the above discussion does fully entail Assumption \ref{hyp2}, thus as already discussed our main $\Gamma$-convergence results (Theorem \ref{t:gconv1} and Corollary \ref{c:gc}) would hold, provided one could establish the existence of minimizers for the $\eps$-problems.

Nonetheless, in this setting and from a PDE perspective, \eqref{eq:schrodinger_pb} should look like
\begin{equation}
\label{eq:Scheps_NLmobility}
\inf\limits_{\rho,v} \bigg\{ \frac{1}{2}\int_0^1\int_\Omega |v_t|^2\mathsf m(\rho_t)\,\d x\d t + \frac{\eps^2}{2}\int_0^1 \int_\Omega\frac{|\nabla P(\rho)|^2}{\mathsf m(\rho)}\,\d x\d t \bigg\},
\end{equation}
with the infimum running over all distributional solutions of the non-linear continuity equation.

\subsubsection*{Novelty and related literature}

To the best of our knowledge, the approximation of the $W_{\mathsf m}$-geodesic problem by the dynamical Schr\"odinger-like problem \eqref{eq:Scheps_NLmobility} has never been considered before in the literature.
Therefore, the $\Gamma$-convergence results of Section \ref{sec:small_noise_convexity} recast in the current setting are entirely new, and a fortiori so are the derivative of the entropic cost and its Taylor expansion at $\eps=0$ discussed in Section \ref{sec:derivative_cost}.
Instead, we would like to stress once more that identity \eqref{eq:slope-nonlinear} is purely formal, and turning it into a rigorous statement falls out of scope of the paper.
Establishing a rigorous connection between our abstract metric framework and the hands-on PDE formulation \eqref{eq:Scheps_NLmobility} is thus an interesting question, which will require additional technical work and is left for future developments.


\subsection*{Acknowledgments}

LM wishes to thank Jean-Claude Zambrini for numerous and fruitful discussions on the Schr\"odinger problem, and acknowledges support from the Portuguese Science Foundation through FCT project PTDC/MAT-STA/22812/2017 \emph{Schr\"oMoka}.
LT is grateful to Nicola Gigli for useful comments, and acknowledges financial support from FSMP Fondation Sciences Math\'ematiques de Paris.
DV was partially supported by the FCT projects UID/MAT/00324/2020 and\\ PTDC/MAT-PUR/28686/2017.



\end{document}